\newtheorem{introthm}{Theorem}
\newtheorem{introcor}[introthm]{Corollary}
\newtheorem{theorem}{Theorem}[section]
\newtheorem{corollary}[theorem]{Corollary}
\newtheorem{proposition}[theorem]{Proposition}
\newtheorem{conjecture}[theorem]{Conjecture}
\newcommand{\qw}[1]{\!\!\!#1\!\!\!}
\newcommand{\id}{\mbox{id}}
\newcommand{\Z}{\mathbb{Z}}
\newcommand{\tZ}{\tilde{\mathbb{Z}}}
\newcommand{\Q}{\mathbb{Q}}
\newcommand{\tensor}{\otimes}
\newcommand{\join}{*}
\newcommand{\sgn}{{\rm sgn}}
\newenvironment{proof}[1][Proof.]{\begin{trivlist}
\item[\hskip \labelsep {\em #1}]}{$\Box$\end{trivlist}}
\newcommand{\Tor}{{\rm Tor}}
\newcommand{\BD}[2]{\mathsf{BD}_{#1}^{#2}}
\newcommand{\M}[1][{}]{\mathsf{M}_{#1}}
\newcommand{\Symm}[1]{\mathfrak{S}_{#1}}
\newcommand{\zivaljevic}{$\check{\mathrm{Z}}$ivaljevi\'c}
\newcommand{\shareshian}{Shareshian}
\begin{document}

 \title{More Torsion in the Homology of the Matching Complex}

 \author{Jakob Jonsson\thanks{Department of Mathematics, KTH,
  SE-10044 Stockholm, Sweden}}

  \date{}

  \maketitle
  \begin{abstract}
    A matching on a set $X$ is a collection of pairwise disjoint
    subsets of $X$ of size two.
    Using computers, we analyze the integral homology of the matching
    complex $\M[n]$, which is the simplicial complex of 
    matchings on the set $\{1, \ldots, n\}$.
    The main result is the
    detection of elements of order $p$ in the homology for $p \in
    \{5,7,11,13\}$. Specifically, we show that there are elements of 
    order $5$ in the homology of $\M[n]$ for $n \ge 18$ and for $n
    \in \{14,16\}$. The only previously known value was $n = 14$, and
    in this particular case we have a new computer-free proof.
    Moreover, we show that there are elements of order $7$ in the
    homology of $\M[n]$ for all odd $n$ between $23$ and $41$ and for
    $n=30$. 
    In addition, there are elements of order $11$ in the homology of
    $\M[47]$ and elements of order $13$ in the homology of $\M[62]$. 
    Finally, we compute the ranks of the Sylow $3$- and $5$-subgroups 
    of the torsion part of $\tilde{H}_d(\M[n];\Z)$ for $13 \le n \le
    16$; a complete description of the homology already exists for $n
    \le 12$.
    To prove the results, we use a representation-theoretic approach,
    examining subcomplexes of the chain complex of $\M[n]$ obtained by
    letting certain groups act on the chain complex.
  \end{abstract}

  \section{Introduction}
  \label{intro-sec}

  Recall that a {\em matching} on a set $X$ is a collection of
  pairwise disjoint subsets of $X$ of size two.
  Using terminology from graph theory, we refer to a subset of size
  two as an {\em edge on $X$}. 
  The {\em matching complex} $\M[n]$ is the family of matchings on the
  set $\{1, \ldots, n\}$. Since $\M[n]$ is closed under deletion of
  edges, $\M[n]$ is an abstract simplicial complex. 
  
  Despite its simple definition, the topology of $\M[n]$ remains a
  mystery. Its rational simplicial homology is well-known and has
  been computed 
  by Bouc \cite{Bouc} and others \cite{Kara,RR,DongWachs}, but the
  integral homology is known only in special cases. 
  Specifically, the bottom nonvanishing homology group of $\M[n]$ is
  known to be an elementary $3$-group for almost all $n$
  \cite{Bouc,ShWa}. In fact, there is a nearly complete
  characterization of all $(n,d)$ such that the homology
  group $\tilde{H}_d(\M[n];\Z)$ contains elements of order $3$
  \cite{bettimatch}; see Proposition~\ref{kr-prop} (4) and (5)
  for a summary.  
  As for the existence of elements of order $p$ for primes different
  from $3$, nothing is known besides the recent discovery
  \cite{matchtor} that $\tilde{H}_4(\M[14];\Z)$ contains elements of
  order $5$. 

  Computers have not been able to tell us much more about the
  homology of $\M[n]$. A complete description is known only for $n \le
  12$; already for $n=13$, there are too many cells in $\M[n]$ to
  allow for a direct homology computation, at least with the existing
  software.

  The goal of this paper is to cut $\M[n]$ into smaller
  pieces and then use a computer to search for torsion in the homology
  of those pieces. Via this technique, we come fairly close to a
  complete description of the homology of $\M[n]$ for $13 \le n \le
  16$. Moreover, and maybe more importantly, for each of the primes 
  $5$, $7$, $11$, and $13$, we find new values of $n$ such that 
  the homology of $\M[n]$ contains elements of order $p$.

  More precisely, we do the following.
  \begin{itemize}
  \item
    First, we split the chain complex $\mathcal{C}(\M[n];\Z)$ of
    $\M[n]$ into smaller pieces with the property that the homology of
    $\M[n]$ is isomorphic to the direct sum of the homology of the 
    smaller complexes, except that the Sylow $2$-subgroups of the
    torsion part may differ. Using this splitting technique, we
    determine the $3$- and $5$-ranks of the homology of $\M[n]$ for
    $13 \le n \le 16$. Here, we define the $p$-rank of an abelian
    group to be the rank of the Sylow $p$-subgroup of the torsion part
    of the group. 
  \item
    Second, we use a similar technique to produce even smaller
    pieces. While it does not seem to be possible to compute the 
    entire homology of $\M[n]$ from these pieces, we may still
    deduce useful information about the existence of elements of order
    $p$ for
    various primes $p$. Specifically, our computations show that there
    are elements of order $5$ in
    $\tilde{H}_{4+u}(\M[14+2u];\Z)$ for $0 \le u \le 8$, in
    $\tilde{H}_{6+u}(\M[19+2u];\Z)$ for $0 \le u \le 4$, and 
    in $\tilde{H}_{8}(\M[24];\Z)$.
    This turns out to imply that there are elements of order $5$ in 
    the homology of $\M[n]$ whenever $n =14$, $n=16$, or $n \ge 18$.
    Moreover, there are elements of order $7$ in
    $\tilde{H}_{8+u}(\M[23+2u];\Z)$ for $0 \le u \le 9$
    and in $\tilde{H}_{11}(\M[30];\Z)$, elements of order $11$ 
    in $\tilde{H}_{13}(\M[47];\Z)$, and elements of order $13$ in
    $\tilde{H}_{19}(\M[62];\Z)$.
  \end{itemize}
  Almost all results are computer-based, but in Section~\ref{m14-sec}
  we present a computer-free proof that $\tilde{H}_4(\M[14];\Z)$
  contains elements of order $5$.

  Based on our computations, we derive conjectures about the
  existence of elements of order $p$ in the homology of $\M[n]$ for
  arbitrary odd primes $p$. For example, the above data suggest 
  that
  there are elements of order $p$ in the homology of $\M[(p^2+6p+1)/4]$
  for any odd prime $p$.

  Let us explain the underlying method in some detail.
  For a given group $G$ acting on the simplicial chain complex
  $\mathcal{C}(\M[n];\Z)$ of $\M[n]$, let
  $\tilde{C}_d(\M[n];\Z)/G$ be the subgroup of the chain group
  $\tilde{C}_d(\M[n];\Z)$ consisting of 
  all sums $\sum_{g \in G} g(c)$ such that $c \in
  \tilde{C}_d(\M[n];\Z)$.
  As long as the order of $G$ is not a multiple
  of a given prime $p$, the homology of the subcomplex does not
  contain elements of order $p$ unless the homology of $\M[n]$
  contains elements of order $p$; see Section~\ref{genconstr-sec}. In
  particular, if we indeed detect elements of order $p$ 
  in the subcomplex, then we can deduce that the homology of $\M[n]$
  also contains elements of order $p$. 

  To give an example, we first introduce some notation. For a sequence
  $\lambda = (\lambda_1, \ldots, \lambda_r)$ of nonnegative integers
  summing to $n$, let $\Symm{\lambda}$ be the Young group
  $\Symm{\lambda_1} \times \cdots \times
  \Symm{\lambda_r}$.  
  Given a set partition $(U_1, \ldots,  U_r)$ of $\{1, \ldots, n\}$
  such that $|U_a| = \lambda_a$ for $1 \le a \le r$, we obtain a
  natural action on $\mathcal{C}(\M[n];\Z)$ by letting
  $\Symm{\lambda_a}$ act on $U_a$ 
  in the natural manner for each $a$; for a group element $\pi \in
  \Symm{\lambda}$, the action is given by replacing the edge $xy
  = \{x,y\}$ with the edge $\pi(x)\pi(y)$ for each choice of distinct
  elements $x, y \in \{1, \ldots, n\}$.

  In Section~\ref{lambdascc-sec}, we demonstrate that 
  $\mathcal{C}(\M[n];\Z)/\Symm{\lambda}$ is isomorphic to the chain
  complex of a certain simplicial complex $\BD{r}{\lambda}$. The
  vertices of this complex are all edges and loops on $\{1, \ldots,
  r\}$; a loop is a multiset of the form $xx = \{x,x\}$).   
  A collection of edges and loops is a face of $\BD{r}{\lambda}$ if 
  and only if the total number of occurrences of the element $a$ in
  the edges and loops is at most $\lambda_a$ for $1 \le a
  \le r$. By a computer-based 
  result due to Andersen \cite{Andersen},
  $\tilde{H}_4(\BD{7}{2};\Z)$ is a cyclic group of order five. As a
  consequence, the following is true.
  \begin{proposition}[Jonsson \cite{matchtor}]
    \label{matchtor-prop}
    $\tilde{H}_{4}(\M[14]; \Z)$ contains elements of order $5$.
  \end{proposition}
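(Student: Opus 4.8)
The plan is to instantiate the general construction described above with $n=14$ and the partition $\lambda=(2,2,2,2,2,2,2)$, which I abbreviate $2^7$; thus $r=7$ and $\lambda$ sums to $14$. The associated Young group is $G:=\Symm{\lambda}=\Symm{2}^7$, of order $2^7=128$, which is coprime to $5$, so it is a legitimate choice for detecting $5$-torsion. Direct computation of $\tilde{H}_4(\M[14];\Z)$ is out of reach, but the quotient complex is small enough for Andersen's computer to handle: by Section~\ref{lambdascc-sec}, $\mathcal{C}(\M[14];\Z)/G$ is isomorphic, as a chain complex, to $\mathcal{C}(\BD{7}{2};\Z)$, and this isomorphism is degree-preserving (a matching with $d+1$ edges corresponds to a $d$-dimensional collection of edges and loops on $\{1,\ldots,7\}$ in which every element occurs at most twice).

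I would then invoke Andersen's result \cite{Andersen}, namely $\tilde{H}_4(\BD{7}{2};\Z)\cong\Z/5$, together with the general principle of Section~\ref{genconstr-sec}: if a group $G$ with $p\nmid|G|$ acts on $\mathcal{C}(\M[n];\Z)$, then any $p$-torsion in $\tilde{H}_d\bigl(\mathcal{C}(\M[n];\Z)/G;\Z\bigr)$ forces $p$-torsion in $\tilde{H}_d(\M[n];\Z)$. Applying this with $p=5$, $d=4$, and our $G=\Symm{2}^7$ yields elements of order $5$ in $\tilde{H}_4(\M[14];\Z)$. The caveat that Sylow $2$-subgroups may differ under such quotients is irrelevant here because $5\neq 2$.

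Given the results assumed from Sections~\ref{lambdascc-sec} and~\ref{genconstr-sec}, the argument is essentially an assembly, and the genuine content is hidden in three places: the chain-level identification with $\BD{7}{2}$, the transfer/norm-map argument underlying the general principle, and Andersen's computation. The only points where care is needed are the bookkeeping of degrees — checking that the degree in which the $5$-torsion of $\BD{7}{2}$ appears matches the degree in the statement — and the observation that, after localizing at $5$, the norm map $c\mapsto\sum_{g\in G}g(c)$ becomes a split injection on homology, so the relevant torsion is genuinely present in $\M[14]$ and not merely created by passing to the quotient.
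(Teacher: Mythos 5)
Your argument is correct, and it is in fact the proof the paper attributes to \cite{matchtor} and sketches in the introduction: the unsigned action of $\Symm{\lambda}$ for $\lambda=(2^7)$ identifies $\mathcal{C}(\M[14];\Z)/\Symm{\lambda}$ with $\mathcal{C}(\BD{7}{2};\Z)$ degree for degree, Andersen's computation gives $\tilde{H}_4(\BD{7}{2};\Z)\cong\Z_5$, and Proposition~\ref{qpreserved-prop} (the transfer argument, with $\gcd(5,2^7)=1$) pulls the $5$-torsion back to $\tilde{H}_4(\M[14];\Z)$. All the bookkeeping you flag checks out: the hypothesis of Theorem~\ref{isomorphic-thm} that $|\Symm{\lambda}|=2^7$ be a non-zero-divisor holds over $\Z$, and a matching of size $d+1$ does correspond to a $d$-face of $\BD{7}{2}$. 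However, this is not the route the paper itself takes in Section~\ref{m14-sec}: there the author gives a \emph{computer-free} proof, using the signed action induced by $\left(\begin{smallmatrix}2&3&3&3&3\\ +&-&-&-&-\end{smallmatrix}\right)$ followed by the unsigned $\Symm{4}$-action permuting the four blocks of size three; the resulting complex has just two nonzero chain groups, each free of rank $4$, and the boundary matrix has determinant $\pm 5$, whence the homology in degree four is $\Z_5$. The trade-off is clear: your route is a short assembly but rests entirely on Andersen's external computer calculation of $\tilde{H}_4(\BD{7}{2};\Z)$, whereas the paper's route is verifiable by hand at the cost of invoking the heavier machinery of signed actions and wreath-product quotients (Theorems~\ref{isomorphic-thm} and \ref{corechaincpx-thm} and Section~\ref{wreath-sec}) and still gives no structural explanation of why the determinant is $5$. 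As written your proposal is a valid proof of the proposition, just not an independent one.
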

  The proof \cite{matchtor} of Proposition~\ref{matchtor-prop}
  uses a definition of
  $\mathcal{C}(\M[n];\Z)/\Symm{\lambda}$ that differs slightly from
  the one given above. As alluded to earlier, we present a
  computer-free proof of the proposition in Section~\ref{m14-sec}.

  Using similar but refined methods, we have managed to deduce
  more information about the presence of elements of order $5$ in the
  homology of $\M[14]$ and other matching complexes. 
  \begin{introthm}
    By computer calculations, the following properties hold.
    \begin{itemize}
    \item[{\em(i)}]
      The $5$-rank of
      $\tilde{H}_4(\M[14];\Z)$ is $233$.
    \item[{\em(ii)}]
      The $5$-rank of 
      $\tilde{H}_5(\M[16];\Z)$ is $8163$.
    \item[{\em(iii)}]
      $\tilde{H}_{4+u}(\M[14+2u];\Z)$ contains 
      elements of order $5$ for $0 \le u \le 8$, 
      as do $\tilde{H}_{6+u}(\M[19+2u];\Z)$ for 
      $0 \le u \le 4$ and 
      $\tilde{H}_{8}(\M[24];\Z)$.
    \end{itemize}
    \label{5tor-thm}
  \end{introthm}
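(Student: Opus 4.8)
The statement is purely computational, so the task of the proof is to assemble, for each of the claimed $(n,d)$, a chain complex small enough for a machine whose homology is known to control that of $\M[n]$; the arithmetic is then carried out by computer, building on the equivariant reductions already set up (the general transfer construction of Section~\ref{genconstr-sec} and the identification of quotient complexes in Section~\ref{lambdascc-sec}).

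For parts~(i) and~(ii) the plan is to use the splitting of $\mathcal{C}(\M[n];\Z)$ into subcomplexes $\mathcal{C}_1,\dots,\mathcal{C}_m$ (the refinement of the construction in Section~\ref{genconstr-sec}), whose defining property is that $\bigoplus_{i}\tilde{H}_d(\mathcal{C}_i;\Z)$ and $\tilde{H}_d(\M[n];\Z)$ have isomorphic Sylow $p$-subgroups for every odd prime $p$; in particular their $5$-ranks coincide. Since the $5$-rank of a finite direct sum of abelian groups is the sum of the $5$-ranks, it then suffices to compute, for each $i$, the integral homology group $\tilde{H}_d(\mathcal{C}_i;\Z)$ in the relevant degree -- concretely, the Smith normal forms of the two boundary maps incident to degree $d$ -- and to add up the multiplicities of the prime $5$ among the elementary divisors. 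Carrying this out for $(n,d)=(14,4)$ should produce the total $233$, and for $(n,d)=(16,5)$ the total $8163$. The one non-numerical point is that the splitting leaves the odd-primary part of the homology untouched; this is part of the cited construction and reduces to the orders of the groups involved being coprime to $5$.

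For part~(iii) the plan is instead to work with a single, heavily quotiented complex per pair and only to certify existence. Fix one of the listed pairs $(n,d)$ and choose a partition $\lambda\vdash n$ all of whose parts are at most $4$, so that $5\nmid|\Symm{\lambda}|$: for $\M[14+2u]$ take $\lambda=(2^{\,7+u})$, so the quotient complex is $\BD{7+u}{2}$ (the case $u=0$ being Andersen's computation $\tilde{H}_4(\BD{7}{2};\Z)\cong\Z/5$ that underlies Proposition~\ref{matchtor-prop}, for which Section~\ref{m14-sec} in addition gives a computer-free argument); for the odd cardinalities $\M[19+2u]$ take a $\lambda$ with one odd part (of size $1$ or $3$) and all other parts equal to $2$; and for $\M[24]$ take $\lambda=(2^{12})$, giving $\BD{12}{2}$. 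By Section~\ref{lambdascc-sec}, $\mathcal{C}(\M[n];\Z)/\Symm{\lambda}$ is the chain complex of $\BD{\ell(\lambda)}{\lambda}$, and -- possibly after a further reduction by a group of order coprime to $5$, in order to keep the complex computable -- the machine can test whether $\tilde{H}_d(\BD{\ell(\lambda)}{\lambda};\Z)$ has an elementary divisor divisible by $5$, equivalently whether the rank of the boundary map into degree $d$ drops when reduced modulo $5$ (using the known rational homology of $\M[n]$, or a direct rational rank computation, to isolate the degree). Whenever the test succeeds, the transfer principle of Section~\ref{genconstr-sec} -- which for a group $G$ with $5\nmid|G|$ embeds the $5$-torsion of $\tilde{H}_d(\mathcal{C}(\M[n];\Z)/G)$ into $\tilde{H}_d(\M[n];\Z)$ -- yields elements of order $5$ in $\tilde{H}_d(\M[n];\Z)$. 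Running this for each of the $9+5+1=15$ listed pairs establishes~(iii); note that these quotient complexes are not expected to reassemble to the full homology of $\M[n]$, but this is irrelevant since only existence is claimed.

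The main obstacle is entirely one of size. Even after the first splitting, the pieces relevant to $\M[16]$ are large, and among the complexes $\BD{7+u}{2}$ the cases up to $\BD{15}{2}$ (together with $\BD{12}{2}$ and the complexes for the odd cardinalities) have enough cells that a full integral Smith normal form is expensive. The practical work is therefore to choose the partitions, and the auxiliary group actions, so as to minimize the dimension of each piece, and -- for part~(iii), where a single class of order $5$ suffices -- to certify the torsion as cheaply as possible by comparing ranks of the boundary maps over $\Field_5$ against their ranks over $\Q$, rather than diagonalizing over $\Z$. Once the pieces are small enough, what remains is bookkeeping: summing the $5$-ranks of the pieces to the stated values $233$ and $8163$, and recording that the modulo-$5$ rank of the appropriate boundary map genuinely drops for every pair appearing in~(iii).
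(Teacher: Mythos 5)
Your plan for parts (i) and (ii) is essentially the paper's: split $\mathcal{C}(\M[n];\Z)$ via the elementary abelian $2$-group generated by the involutions $(1,2),\ldots,(2r-1,2r)$ into $2^r$ direct summands indexed by sign vectors, note that summands with the same number of $+$ signs are isomorphic, and add the $5$-ranks with binomial multiplicities ($1+0\cdot 7+1\cdot 21+1\cdot 35+2\cdot 35+3\cdot 21+5\cdot 7+8\cdot 1=233$, etc.). The only cosmetic difference is that the paper certifies the $p$-ranks from computations over $\Q$ and $\Z_p$ via the Universal Coefficient Theorem rather than from an integral Smith normal form.

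For part (iii), however, there is a genuine gap. You propose to use the \emph{unsigned} actions, i.e.\ partitions into parts of size at most $2$ (plus one small odd part) and the simplicial complexes $\BD{7+u}{2}$, $\BD{12}{2}$, etc. This is not what the paper does, and for good reason. First, the method only gives a sufficient condition: $5$-torsion in a chosen piece implies $5$-torsion in $\M[n]$, but absence of torsion in the piece proves nothing, so your proof only succeeds if the specific pieces you name actually contain elements of order $5$ in the stated degrees --- and the paper explicitly warns that for many of the relevant $\lambda$ the unsigned piece contains \emph{no} $5$-torsion while the signed piece does. There is no evidence in the paper (beyond Andersen's $\BD{7}{2}$ case) that $\tilde{H}_{4+u}(\BD{7+u}{2};\Z)$ or $\tilde{H}_8(\BD{12}{2};\Z)$ has $5$-torsion. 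Second, these complexes are far too large: the paper instead takes all-negative sign vectors on partitions with six to eight parts of sizes $2$--$6$, passes to the much smaller ``core'' quotient complex of Theorem~\ref{corechaincpx-thm} (generators are multigraphs with $\deg_\sigma(a)\in\{\lambda_a-1,\lambda_a\}$), and for $u\ge 5$ additionally quotients by wreath products $\Symm{\lambda_k}\wr\Symm{\mu_k}$ permuting equal blocks; even then the complexes are near the limit of feasibility. So the computations you describe are neither the ones that were performed nor ones that can be expected to terminate, and even in principle they might return an empty answer, in which case the theorem would remain unproved.
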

  Note that $233$ is a Fibonacci number. As we will see in
  Section~\ref{upto16-sec}, this does not appear to be a pure
  coincidence.

  To prove the theorem, we consider various actions of the Young group
  $\Symm{\lambda}$ on the chain complex of $\M[n]$ for certain 
  choices of $\lambda = (\lambda_1, \ldots, \lambda_r)$. Specifically,
  for each sequence ${\sf s} = (s_1, \ldots, s_r)$ of signs, we obtain 
  an action given by 
  \begin{equation}
    (\pi,c) \mapsto 
    \prod_{1 \le a \le r : s_a = -1} \sgn(\pi_a) \cdot \pi(c),
    \label{youngaction-eq}
  \end{equation}
  where $\pi = \pi_1 \cdots \pi_r$ and $\pi_a \in \Symm{U_a}$. Here,
  $\Symm{U_a}$ denotes the symmetric group $\Symm{\lambda_a}$, viewed
  as the group of permutations on the set $U_a$.
  Whenever we refer to an action on $\M[n]$ as being induced by the 
  pair $(\lambda,{\sf s})$, we mean this action.
  For clarity, we will often write the pair in matrix form as
  \begin{equation}
    \left(\begin{array}{c} \lambda
        \\ {\sf s} \end{array}\right) = 
    \left(\begin{array}{cccc} {\lambda_1} & {\lambda_2} &
        {\cdots} & {\lambda_r}
        \\ 
        {s_1} & {s_2} & {\cdots} & {s_r}
      \end{array}\right).
    \label{lambdasnot-eq}
  \end{equation}
  If all $s_a$ are equal to $-1$, then the action is simply
  $(\pi,c)\mapsto \sgn(\pi) \cdot \pi(c)$. We refer to this action as
  the {\em natural signed action} and the action 
  $(\pi,c)\mapsto \pi(c)$ as the {\em natural unsigned action}.

  To prove (i) and (ii) in Theorem~\ref{5tor-thm}, we split the chain
  complexes into many small 
  pieces using the actions defined in (\ref{youngaction-eq}) for the
  group $(\Symm{2})^r$ for $2r = 14$ and $2r = 16$, respectively. 
  For the proof of (iii), we consider the natural signed action of
  certain Young groups $\Symm{\lambda}$ on the
  chain complex of $\M[n]$. In some
  cases, we need to pick an even larger group whose elements 
  permute not only the elements within each $U_a$ but also the sets
  $U_a$ themselves. We describe this construction in greater detail
  in Section~\ref{wreath-sec}.

  It is important to stress that the choice of signs has a significant
  impact on the homology of the resulting piece. For many of the
  choices of $\lambda$ used for proving Theorem~\ref{5tor-thm}, the
  piece resulting from the natural {\em signed} action of
  $\Symm{\lambda}$ contains plenty of elements of order $5$, whereas
  the piece resulting from the {\em unsigned} action of
  $\Symm{\lambda}$ does not contain any such elements at all.

  By the following proposition, the result that
  $\tilde{H}_6(\M[19];\Z)$ and $\tilde{H}_8(\M[24];\Z)$ contain
  elements of order $5$ turns out to be of particular importance.
  \begin{proposition}[Jonsson \cite{bettimatch}]
    For $q \ge 3$,
    if $\tilde{H}_{2q}(\M[5q+4];\Z)$ contains
    elements of order $5$, then so does
    $\tilde{H}_{2q+u}(\M[5q+4+2u];\Z)$ for each
    $u \ge 0$. 
    \label{further5-prop}
  \end{proposition}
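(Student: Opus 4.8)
The plan is to construct an explicit map of chain complexes that carries a putative order-$5$ cycle in degree $2q$ for $\M[5q+4]$ to one in degree $2q+1$ for $\M[5q+6]$, and then iterate. The natural device is the stabilization operator used for matching complexes: adding two new vertices $5q+5,\,5q+6$ together with a new edge between them. More precisely, for a simplicial complex on $\{1,\dots,n\}$ one has the standard cofiber-type sequence relating $\M[n]$, $\M[n-1]$ (deletion of a vertex), and $\M[n-2]$ (its link, which is again a matching complex), and the connecting homomorphism in the associated long exact sequence is, up to sign, "join with the edge $\{n-1,n\}$". First I would set up this long exact sequence with $\Z_{(5)}$ coefficients (localizing at $5$), so that $\tilde H_*(\M[k];\Z_{(5)})$ sees exactly the $5$-torsion and the free part. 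The claim to establish is that, for the relevant range of degrees, the map $\tilde H_{2q+u}(\M[5q+4+2u];\Z_{(5)}) \to \tilde H_{2q+u+1}(\M[5q+6+2u];\Z_{(5)})$ given by joining with a new edge is injective on the $5$-torsion.

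The key input making this work is a dimension/connectivity count: $\M[m]$ is known (Bouc; Björner–Lovász–Vrećica–Živaljević) to be $(\nu_m-1)$-connected where $\nu_m = \lfloor (m+1)/3 \rfloor - 1$, and its top nonvanishing homology sits in degree $\lfloor (m-2)/2 \rfloor$. For $m = 5q+4$ the degree $2q$ is strictly between the connectivity bound and the top degree once $q \ge 3$, which is exactly the hypothesis; this is the arithmetic reason the proposition needs $q\ge3$. So the second step is to check that in the three-term long exact sequence
\[
\tilde H_{2q+u+1}(\M[5q+5+2u]) \longrightarrow \tilde H_{2q+u+1}(\M[5q+6+2u]) \longrightarrow \tilde H_{2q+u}(\M[5q+4+2u]) \xrightarrow{\ \partial\ } \tilde H_{2q+u}(\M[5q+5+2u]),
\]
the rightmost map $\partial$ vanishes on $5$-torsion, or else is bypassed. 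The cleanest route: show that the relevant homology of the intermediate complex $\M[5q+5+2u]$ has no $5$-torsion in the pertinent degree, or that the composite deletion map kills it, so that the boundary map out of $\tilde H_{2q+u}(\M[5q+4+2u])$ is forced to land in a $5$-torsion-free group and hence annihilates $5$-torsion; then exactness gives the desired injection $\tilde H_{2q+u}(\M[5q+4+2u]) \hookrightarrow \tilde H_{2q+u+1}(\M[5q+6+2u])$ after identifying the link of a vertex with $\M[5q+4+2u]$. Alternatively, one can cite the companion results of \cite{bettimatch} on the structure of the boundary maps in this filtration directly.

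The main obstacle I anticipate is precisely controlling the boundary map $\partial$ on the $5$-primary part: a priori $\tilde H_{2q+u}(\M[5q+5+2u];\Z_{(5)})$ could itself contain order-$5$ elements into which the torsion maps nontrivially. Handling this requires either a vanishing statement for that group in that degree (a bound on where $5$-torsion can appear, analogous to the degree bounds known for $3$-torsion) or an explicit description of a generating cycle showing that its image under "join with an edge and then delete a vertex" is a boundary. I would aim for the first: prove that $\tilde H_{d}(\M[m];\Z)$ is $5$-torsion-free whenever $d < $ some explicit threshold that, for $m = 5q+5+2u$, exceeds $2q+u$ when $q\ge3$ — this is the sort of bound that follows from analyzing the bottom of the filtration by vertices together with the rational homology vanishing ranges of Bouc. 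Once the map on $5$-torsion is shown injective for the single step $u \mapsto u+1$, the conclusion for all $u \ge 0$ follows by an immediate induction.
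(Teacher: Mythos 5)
The paper does not actually prove this proposition---it is imported from \cite{bettimatch}---so there is no internal proof to compare with; judged on its own, your sketch has two concrete gaps. The first is logical. In your long exact sequence the map runs \emph{from} $\tilde{H}_{2q+u+1}(\M[5q+6+2u])$ \emph{to} the relative term (which, incidentally, is a direct sum of $5q+5+2u$ copies of $\tilde{H}_{2q+u}(\M[5q+4+2u])$, not a single copy). Even once you know that $\partial$ annihilates an order-$5$ class $z$, exactness only says that $z$ lies in the \emph{image} of $\tilde{H}_{2q+u+1}(\M[5q+6+2u])$; it does not give the injection you assert, and an element of order $5$ in the image of a homomorphism need not have a preimage of finite order (consider $\Z \twoheadrightarrow \Z_5$). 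Nor can you escape via finiteness of the source: that group corresponds to $(k,r)=(q+u+1,\,q+1)$ and is infinite by Proposition~\ref{kr-prop}(1) as soon as $u \ge (q-2)(q+1)/2$. Supplying this missing step---for instance by comparing $\Z_5$- and $\Q$-Betti numbers via universal coefficients, or by exhibiting a finite-order lift---is the actual content of the proposition.

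The second gap is factual. Your ``cleanest route'' requires the target of $\partial$, namely $\tilde{H}_{2q+u}(\M[5q+5+2u];\Z)$, to be $5$-torsion-free, and it is not. For $q=3$, $u=2$ this group is $\tilde{H}_{8}(\M[24];\Z)$, which contains elements of order $5$ by Theorem~\ref{5tor-thm}(iii) of this paper---a direct computation independent of the proposition under discussion. More generally these intermediate pairs have $(k,r)=(q+u-1,\,q+2)$ and for $u\ge 2$ lie inside the region $\frac{2n-8}{5}\le d\le\frac{n-6}{2}$ where $5$-torsion is known or conjectured to be pervasive (Conjecture~\ref{5tor-conj}), so no vanishing threshold of the kind you hope for exists. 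The vertex-deletion exact sequence is indeed the toolbox of \cite{bettimatch}, but any correct argument must get around both obstacles above, and your sketch does not indicate how. (A smaller point: for $q=2$ the degree $2q=4$ \emph{equals} the bottom nonvanishing degree of $\M[14]$ rather than lying strictly above it, so your heuristic for the hypothesis $q\ge 3$ also needs adjusting.)
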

  Theorem~\ref{5tor-thm} and Proposition~\ref{further5-prop} imply the
  following.
  \begin{introthm}
    The homology groups 
    $\tilde{H}_{6+u}(\M[19+2u];\Z)$ 
    and 
    $\tilde{H}_{8+u}(\M[24+2u];\Z)$ 
    contain  
    elements of order $5$ for each
    $u \ge 0$. Thus the homology of $\M[n]$ contains
    elements of order $5$ for all 
    $n \ge 18$ and for $n=14$ and $n=16$.
    \label{m19+2u-thm}
  \end{introthm}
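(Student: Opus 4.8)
The plan is to obtain the two displayed families by feeding base cases from Theorem~\ref{5tor-thm}(iii) into the stabilization statement of Proposition~\ref{further5-prop}, and then to read off the full range of $n$ by an elementary parity argument. First I would isolate the two base cases. Taking $u=0$ in the family $\tilde{H}_{6+u}(\M[19+2u];\Z)$ of Theorem~\ref{5tor-thm}(iii) shows that $\tilde{H}_6(\M[19];\Z)$ has elements of order $5$, and the same theorem records that $\tilde{H}_8(\M[24];\Z)$ does too. These are precisely the inputs Proposition~\ref{further5-prop} requires: $(n,d) = (19,6)$ equals $(5q+4,2q)$ with $q=3$, and $(n,d) = (24,8)$ equals $(5q+4,2q)$ with $q=4$, and both satisfy $q \ge 3$.

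Next I would invoke Proposition~\ref{further5-prop} twice. The case $q=3$ upgrades the order-$5$ class in $\tilde{H}_6(\M[19];\Z)$ to one in $\tilde{H}_{6+u}(\M[19+2u];\Z)$ for every $u \ge 0$; the case $q=4$ does the same for $\tilde{H}_8(\M[24];\Z)$ and the family $\tilde{H}_{8+u}(\M[24+2u];\Z)$. That settles the two families asserted in the theorem.

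Finally I would close the gap to arbitrary $n$ by splitting on parity. Every odd $n \ge 19$ is $19 + 2u$ for some $u \ge 0$, so the first family covers it; every even $n$ in the range $14 \le n \le 30$ equals $14 + 2u$ with $0 \le u \le 8$ and is handled directly by Theorem~\ref{5tor-thm}(iii); every even $n \ge 24$ is $24 + 2u$ and is handled by the second family. Taking the union, the homology of $\M[n]$ contains elements of order $5$ for all even $n \ge 14$ and all odd $n \ge 19$, which is exactly the claim that this holds for all $n \ge 18$ together with the extra values $n = 14$ and $n = 16$.

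I do not anticipate a genuine obstacle: the heavy lifting sits entirely inside Theorem~\ref{5tor-thm} (the computer computations) and Proposition~\ref{further5-prop} (quoted from \cite{bettimatch}), and what remains is bookkeeping. The only point I would want to double-check is that the parity argument is exhaustive --- in particular, that the even values $18$, $20$, and $22$, which lie below the smallest $n$ reached by the $q=4$ stabilization, are still captured by the finite list $14 \le n \le 30$ supplied by Theorem~\ref{5tor-thm}(iii).
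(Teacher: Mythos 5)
Your proposal is correct and is exactly the paper's argument: the paper derives this theorem by combining Theorem~\ref{5tor-thm}(iii) (which supplies the base cases $\tilde{H}_6(\M[19];\Z)$ and $\tilde{H}_8(\M[24];\Z)$ as well as the even values $14 \le n \le 30$) with Proposition~\ref{further5-prop} applied at $q=3$ and $q=4$. Your parity bookkeeping for the final claim is also sound, so there is nothing to add.
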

  We do not know whether
  Proposition~\ref{further5-prop} can be extended to include $q=2$.
  In particular, we cannot tell whether
  $\tilde{H}_{4+u}(\M[14+2u];\Z)$
  contains elements of order $5$ for $u \ge 9$, though we do
  conjecture that this is the case.

  Next, we consider higher torsion, in which case we have the
  following results.
  \begin{introthm}
    By computer calculations, the following properties hold.
    \begin{itemize}
    \item[{\em(i)}]
      The homology group
      $\tilde{H}_{8+u}(\M[23+2u];\Z)$ contains
      elements of order $7$ for $0 \le u \le 9$.
      The same is true for the group
      $\tilde{H}_{11}(\M[30];\Z)$.
    \item[{\em(ii)}]
      The homology group
      $\tilde{H}_{19}(\M[47];\Z)$ contains
      elements of order $11$.
    \item[{\em(iii)}]
      The homology group
      $\tilde{H}_{26}(\M[62];\Z)$ contains
      elements of order $13$.
    \end{itemize}
    \label{7tor-thm}
  \end{introthm}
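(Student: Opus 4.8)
The plan is to reduce each statement in Theorem~\ref{7tor-thm} to a homology computation on a much smaller complex, exactly in the spirit of the reduction behind Proposition~\ref{matchtor-prop}. The general mechanism is the following: for a prime $p \in \{7, 11, 13\}$ and a chosen pair $(\lambda, {\sf s})$ with $\Symm{\lambda}$ of order coprime to $p$, the subcomplex $\mathcal{C}(\M[n];\Z)/\Symm{\lambda}$ obtained via the action~(\ref{youngaction-eq}) satisfies the property recorded in Section~\ref{genconstr-sec}: its homology contains no elements of order $p$ unless $\tilde{H}_*(\M[n];\Z)$ does. Hence it suffices, for each target pair $(n,d)$ in the statement, to exhibit one such $(\lambda,{\sf s})$ together with a direct computer verification that $\tilde{H}_d(\mathcal{C}(\M[n];\Z)/\Symm{\lambda})$ has $p$-torsion. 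By the identification in Section~\ref{lambdascc-sec}, the unsigned version of this quotient is the chain complex of $\BD{r}{\lambda}$, and the signed versions are its ``twisted'' analogues; in all cases the number of cells is small enough to run Smith normal form.

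First I would treat part~(i). For the family $\tilde{H}_{8+u}(\M[23+2u];\Z)$, $0 \le u \le 9$, and for $\tilde{H}_{11}(\M[30];\Z)$, I would follow the strategy used for Theorem~\ref{5tor-thm}(iii): take $\lambda$ to be a partition of $n$ into parts of size $2$ (so $\Symm{\lambda} = (\Symm{2})^r$, of order $2^r$, coprime to $7$) and use the natural signed action, i.e.\ ${\sf s} = (-1,\dots,-1)$, possibly enlarging the group by a wreath-type action permuting the blocks $U_a$ as described in Section~\ref{wreath-sec} when that is needed to bring the cell count down to a computable range. The base case here should be the ``critical'' pair $(23,8)$; the cases $u \ge 1$ can in principle also be obtained from a stabilization result of the type in Proposition~\ref{further5-prop} specialized to $p=7$ (a pair of the form $(7q+b)$), but absent such a statement in the excerpt I would simply run the computation for each $u$ separately, as the theorem is stated to be computer-based. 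The case $n=30$ is handled by the same kind of reduction with a partition of $30$ adapted so the relevant twisted $\BD{r}{\lambda}$ is tractable.

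For parts~(ii) and~(iii) the approach is identical, the only change being the prime and the size of $n$. For $p = 11$, $n = 47$, $d = 19$: choose $\lambda$ a partition of $47$ (necessarily with an odd number of odd parts, e.g.\ one part of size $3$ or $1$ together with parts of size $2$) so that $|\Symm{\lambda}|$ is a power of $2$ times small odd factors, all coprime to $11$, equip it with the natural signed action, enlarge by a block-permuting wreath group if necessary, and verify by computer that $\tilde{H}_{19}$ of the resulting small complex contains an element of order $11$. For $p = 13$, $n = 62$, $d = 26$: the same, with a partition of $62$ into parts of size $2$, natural signed action, and a large enough block-permuting symmetry to make the Smith normal form feasible; one then reads off an order-$13$ element in $\tilde{H}_{26}$.

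The main obstacle is purely computational feasibility rather than conceptual: as $p$ and $n$ grow, even the quotient complexes $\BD{r}{\lambda}$ and their twisted versions have rapidly increasing numbers of cells, and the Smith normal form over $\Z$ becomes the bottleneck. The real work is therefore in choosing $(\lambda, {\sf s})$ and the block-permuting group cleverly so that (a) the group order stays coprime to $p$, (b) the complex is small enough to compute, and (c) the $p$-torsion actually survives into the quotient — point (c) is delicate because, as stressed in the discussion after Theorem~\ref{5tor-thm}, the signed action can carry $p$-torsion that the unsigned action kills, so some experimentation with sign patterns is unavoidable. Once a working $(\lambda,{\sf s})$ is found for each target pair, the deduction that $\M[n]$ itself has the claimed torsion is immediate from the coprimality principle of Section~\ref{genconstr-sec}.
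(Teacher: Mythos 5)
Your general framework is the right one --- quotient by a group action of order coprime to $p$ as in Section~\ref{genconstr-sec}, identify the quotient with a twisted version of $\BD{r}{\lambda}$ as in Section~\ref{lambdascc-sec}, enlarge by a block-permuting group as in Section~\ref{wreath-sec}, and run Smith normal form. But your concrete instantiation --- partitions of $n$ into parts of size $2$ with the natural signed action --- would fail for exactly the reason you flag as ``purely computational'': and here the computation \emph{is} the theorem, so this is a genuine gap rather than a detail. With $\lambda = (2^r,1^{n-2r})$ you get $r \approx n/2$ parts (e.g.\ $31$ parts for $\M[62]$), and the quotient complex consists of degree-constrained multigraphs on $\approx n/2$ vertices in dimension $d \approx n/2 - 5$, which is astronomically large. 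The block-permuting trick cannot rescue this, because the wreath factor $\Symm{\mu_k}$ must \emph{also} have order coprime to $p$: for $p=13$ you may permute at most $12$ of the $31$ blocks at a time, for $p=11$ at most $10$ of the $\approx 23$ blocks, and for $p=7$ at most $6$ blocks --- nowhere near enough symmetry to collapse the complex to a computable size.

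The missing idea is the choice of partitions with \emph{few large parts}, which is where all the content of the paper's proof lives. For $p = 2r-1$ the paper takes one positively charged part of size $r-1$ together with $r+1$ negatively charged parts of size $r$ (so $(3,4^5)$ for $\M[23]$, $(5,6^7)$ for $\M[47]$, $(6,7^8)$ for $\M[62]$), and lets $\Symm{r+1}$ permute the equal parts. Then $|\Symm{\lambda}|$ and $|\Symm{T}|$ involve only factorials of integers $\le r+1 < p$, so coprimality to $p$ is automatic, while the quotient complex lives on only $r+2$ vertices and has a handful of cells per dimension --- Table~\ref{bdtwist4-tab} records that the degree-$d$ homology of the resulting complex is exactly $\Z_7$, $\Z_{11}$, $\Z_{13}$ in the three cases. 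For the range $0 \le u \le 9$ in part (i) and for $\M[30]$, the paper similarly uses $8$- or $9$-part partitions with parts of sizes between $2$ and $6$ and all signs negative (Tables~\ref{bdtwist7odd-tab} and \ref{bdtwist7even-tab}); there is no stabilization result analogous to Proposition~\ref{further5-prop} invoked for $p=7$, so each $u$ is indeed computed separately, as you anticipated. One further caution: your claim that the all-size-$2$ decomposition must ``see'' the torsion is correct in principle (by Section~\ref{abeliantwo-sec} the homology of $\M[n]$ splits over the sign patterns, up to $2$-torsion), but that observation does not produce a feasible computation, and feasibility is the whole point.
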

  
  To prove Theorem~\ref{7tor-thm} (i) for $u=0$, we consider the action
  induced by 
  $
\left(\begin{array}{c} \lambda
      \\[-1ex] {\sf s} \end{array}\right) = 
\left(\begin{array}{cccccc} \qw{3} & \qw{4} & \qw{4} & \qw{4} &
      \qw{4} & \qw{4} 
      \\[-1ex] \qw{+} & \qw{-} & \qw{-} & \qw{-} & \qw{-} & \qw{-}
    \end{array}\right)$ 
  as defined in (\ref{youngaction-eq}).
  A computer calculation yields that
  the homology in degree eight of the resulting
  chain complex is a group of order $7$.  

  The action induced by $\left(\begin{array}{cccccc} \qw{3} & \qw{4} &
      \qw{4} & \qw{4} & 
      \qw{4} & \qw{4} 
      \\[-1ex] \qw{+} & \qw{-} & \qw{-} & \qw{-} & \qw{-} & \qw{-} \end{array}\right)$ 
  fits nicely into a pattern starting 
  with 
  $\left(\begin{array}{cccc} \qw{1} & \qw{2} & \qw{2} & \qw{2} \\[-1ex]
      \qw{+} & \qw{-} & \qw{-} & \qw{-} \end{array}\right)$ 
  and 
  $\left(\begin{array}{ccccc} \qw{2} & \qw{3} & \qw{3} & \qw{3} &
      \qw{3}
      \\[-1ex] \qw{+} & \qw{-} & \qw{-} & \qw{-} & \qw{-}
    \end{array}\right)$.
  The action induced by 
  $\left(\begin{array}{cccc} \qw{1} & \qw{2} & \qw{2} & \qw{2} \\[-1ex]
      \qw{+} & \qw{-} & \qw{-} & \qw{-} \end{array}\right)$ on $\M[7]$
  yields a chain complex with a homology group of order $3$ in degree
  one, whereas the corresponding 
  action induced by 
  $\left(\begin{array}{ccccc} \qw{2} & \qw{3} & \qw{3} & \qw{3} &
      \qw{3}
      \\[-1ex] \qw{+} & \qw{-} & \qw{-} & \qw{-} & \qw{-}
    \end{array}\right)$
  on $\M[14]$ induces a chain complex with a homology group of order
  $5$ in degree four.

  Parts (ii) and (iii) in Theorem~\ref{7tor-thm} fit the same pattern;
  we pick the group actions induced by 
  $\left(\begin{array}{cccccccc} \qw{5} & \qw{6} & \qw{6} & \qw{6} 
      & \qw{6} & \qw{6} & \qw{6} & \qw{6}
      \\[-1ex] \qw{+} & \qw{-} & \qw{-} & \qw{-}
      & \qw{-} & \qw{-} & \qw{-} & \qw{-}
    \end{array}\right)$
  and 
  $\left(\begin{array}{ccccccccc} \qw{6} & \qw{7} & \qw{7} & \qw{7} 
      & \qw{7} & \qw{7} & \qw{7} & \qw{7} & \qw{7}
      \\[-1ex] \qw{+} & \qw{-} & \qw{-} & \qw{-}
      & \qw{-} & \qw{-} & \qw{-} & \qw{-} & \qw{-}
    \end{array}\right)$. The resulting chain complexes are way too
  large for a computer to handle, but since almost all $\lambda_a$ are
  the same, we may extend the group to include elements that
  permute the sets $U_a$; see the discussion above
  after Theorem~\ref{5tor-thm}. 

  For completeness, let us mention that the corresponding group action
  induced by 
  $\left(\begin{array}{ccccccc} \qw{4} & \qw{5} & \qw{5} & \qw{5} 
      & \qw{5} & \qw{5} & \qw{5}
      \\[-1ex] \qw{+} & \qw{-} & \qw{-} & \qw{-}
      & \qw{-} & \qw{-} & \qw{-}
    \end{array}\right)$ yields a cyclic homology group of order $9$ in 
  degree $13$. Since the order of the acting group 
  is a multiple of $9$, we cannot conclude anything about the
  existence of elements of order $9$ in the homology of $\M[34]$.

  \begin{table}[htb]
    \caption{The homology $\tilde{H}_d(\M[n];\Z)$ for $n \le 16$. For
        $n \ge 13$, 
      the second highest nonvanishing homology
      group is a guess based on computations over some small fields
      $\Z_p$ of odd characteristic; the torsion part might be a
      strictly larger group.} 
    \begin{footnotesize}
    \begin{center}
        \begin{tabular}{|r||c|c|c|c|c|}
          \hline
          & & & & & \\[-2.5ex]
          &  $d=0$ & \ 1 \ & \ 2 \ & \ 3 \ & \ 4 \
          \\
          & & & & & \\[-2.5ex]
          \hline
          \hline
          & & & & & \\[-2.5ex]
          $n = 3$ & $\Z^2$ & $-$ & $-$ & $-$ & $-$ \\
          & & & & & \\[-2.5ex]
          \hline
          & & & & & \\[-2.5ex]
          $4$ & $\Z^2$ & $-$ & $-$ & $-$ & $-$ \\
          & & & & & \\[-2.5ex]
          \hline
          & & & & & \\[-2.5ex]
          $5$ & $-$ & $\Z^6$ & $-$ & $-$ & $-$ \\
          & & & & & \\[-2.5ex]
          \hline
          & & & & & \\[-2.5ex]
          $6$ & $-$ & $\Z^{16}$ & $-$ & $-$ & $-$ \\
          & & & & & \\[-2.5ex]
          \hline
          & & & & & \\[-2.5ex]
          $7$ & $-$ & $\Z_3$ & $\Z^{20}$ & $-$ & $-$\\
          & & & & & \\[-2.5ex]
          \hline
          & & & & & \\[-2.5ex]
          $8$  & $-$ & $-$ & $\Z^{132}$ & $-$ & $-$ \\
          & & & & & \\[-2.5ex]
          \hline
          & & & & & \\[-2.5ex]
          $9$  & $-$ & $-$ & $\Z^{42} \oplus (\Z_3)^8$ &
          $\Z^{70}$ & $-$ \\
          & & & & & \\[-2.5ex]
          \hline
          & & & & & \\[-2.5ex]
          $10$  & $-$ & $-$ & $\Z_3$ &
          $\Z^{1216}$ & $-$  \\
          & & & & & \\[-2.5ex]
          \hline
          & & & & & \\[-2.5ex]
          $11$ & $-$ & $-$ & $-$ & $\Z^{1188} \oplus (\Z_3)^{45}$ &
          $\Z^{252}$ \\
          & & & & & \\[-2.5ex]
          \hline
          & & & & & \\[-2.5ex]
          $12$ & $-$ & $-$ & $-$ & $(\Z_3)^{56}$  &
          $\Z^{12440}$ \\[-2.5ex]
          & & & & & \\
          \hline
        \end{tabular}

        \bigskip

        \begin{tabular}{|r||c|c|c|c|}
          \hline
          & \ $d=3$ \ & \ 4 \ & \ 5 \ & \ 6 \ \\
          & & & & \\[-2.5ex]
          \hline
          \hline
          & & & & \\[-2.5ex]
          $n=13$ &  $\Z_3$  &
          $\Z^{24596} \oplus (\Z_3)^{220}$ &
          $\Z^{924}$ & $-$ \\  
          & & & & \\[-2.5ex]
          \hline
          & & & & \\[-2.5ex]
          $14$  & $-$ &
          $ (\Z_5)^{233} \oplus
          (\Z_3)^{2157}$ &  $\Z^{138048}$ & $-$ \\ 
          & & & & \\[-2.5ex]
          \hline
          & & & & \\[-2.5ex]
          $15$
          & $-$ & $(\Z_3)^{92}$ & $\Z^{472888} \oplus (\Z_3)^{1001}$
          & 
          $\Z^{3432}$ \\ 
          & & & & \\[-2.5ex]
          \hline
          & & & & \\[-2.5ex]
          $16$
          & $-$ & $\Z_3$ & 
          $\begin{array}{c}\Z^{24024}\ \oplus \\ (\Z_5)^{8163} \oplus
          (\Z_3)^{60851}\end{array}$ &
          $\Z^{1625288}$ \\[-2.5ex]
          & & & & \\
          \hline
        \end{tabular}
    \end{center}
    \end{footnotesize}
    \label{matching-tab}
  \end{table}

  In addition, we establish precise results about 
  the $3$-rank of the homology of $\M[n]$ for $13 \le n \le 16$. The
  methods used are 
  exactly the same as those 
  used for proving Theorem~\ref{5tor-thm} (i) and (ii).
  \begin{introthm}
    By computer calculations, the following properties hold.
    \begin{itemize}
    \item[\em(i)]
      The $3$-rank of $\tilde{H}_4(\M[13];\Z)$ is $220$.
    \item[\em(ii)]
      The $3$-rank of $\tilde{H}_4(\M[14];\Z)$ is $2157$.
    \item[\em(iii)]
      The $3$-ranks of $\tilde{H}_4(\M[15];\Z)$ and 
      $\tilde{H}_5(\M[15];\Z)$ are $92$ and $1001$, respectively.
    \item[\em(iv)]
      The $3$-rank of $\tilde{H}_5(\M[16];\Z)$ is $60851$.
    \end{itemize}
    \label{3tor-thm}
  \end{introthm}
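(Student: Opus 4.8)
The plan is to reduce the computation of the $3$-rank of $\tilde{H}_d(\M[n];\Z)$, for each listed pair $(n,d)$, to a collection of small, computer-feasible homology computations, by exactly the argument used for Theorem~\ref{5tor-thm}(i),(ii) with the prime $5$ replaced by $3$. Write $n = 2r$, fix a set partition $(U_1,\ldots,U_r)$ of $\{1,\ldots,n\}$ into pairs, and let $G = (\Symm{2})^r$ act on $\mathcal{C}(\M[n];\Z)$ through the family of actions (\ref{youngaction-eq}). Over $\Z[1/2]$ the group algebra $\Z[1/2][G]$ splits as a product of $2^r$ copies of $\Z[1/2]$, one for each sign sequence $\mathsf{s}\in\{+1,-1\}^r$, with primitive idempotents $e_{\mathsf{s}} = 2^{-r}\sum_{\pi\in G}\chi_{\mathsf{s}}(\pi)\,\pi$. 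Applying these idempotents splits the chain complex as a direct sum of subcomplexes $\mathcal{C}(\M[n];\Z[1/2]) \cong \bigoplus_{\mathsf{s}} e_{\mathsf{s}}\mathcal{C}(\M[n];\Z[1/2])$, and by the identifications of Sections~\ref{lambdascc-sec} and~\ref{wreath-sec} each summand is, up to a sign twist of coefficients, the chain complex of a small simplicial complex built on only $r$ underlying points in the spirit of $\BD{r}{\lambda}$; in particular each summand has vastly fewer cells than $\M[n]$ itself.

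Since $2$ is invertible in $\Z[1/2]$ while $3$ remains a nonzerodivisor, passing to $\Z[1/2]$-coefficients destroys no $3$-torsion, so the $3$-rank of $\tilde{H}_d(\M[n];\Z)$ equals the sum over $\mathsf{s}$ of the $3$-ranks of $\tilde{H}_d(e_{\mathsf{s}}\mathcal{C})$. Moreover the group of permutations of $\{1,\ldots,n\}$ that permute the blocks $U_1,\ldots,U_r$ permutes the summands via the permutation action on $\{+1,-1\}^r$, so $e_{\mathsf{s}}\mathcal{C}$ and $e_{\mathsf{s}'}\mathcal{C}$ are isomorphic whenever $\mathsf{s}$ and $\mathsf{s}'$ have the same number of $-1$'s. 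It therefore suffices, for each $k=0,\ldots,r$, to compute the $3$-rank of one representative summand and weight it by $\binom{r}{k}$:
\[
  \text{($3$-rank of $\tilde{H}_d(\M[n];\Z)$)} \;=\; \sum_{k=0}^{r} \binom{r}{k}\,\text{($3$-rank of the $d$-th homology of the $k$-th representative summand)} .
\]

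For each representative summand the computer computes the integral homology directly by Smith normal form of the boundary maps, or equivalently computes $\dim_{\Z_3}$ of its homology in degrees $d$ and $d-1$ together with the rational Betti number (the latter supplied by Bouc \cite{Bouc} and the others \cite{Kara,RR,DongWachs}) and extracts the $3$-rank via the universal coefficient theorem. Summing the weighted contributions should yield the values $220$, $2157$, $(92,1001)$, and $60851$ of the theorem, matching Table~\ref{matching-tab}. The main obstacle is not conceptual but computational: already for $n=16$ the relevant summands, while far smaller than $\M[16]$, are still large enough that naive Smith normal form over $\Z$ is infeasible, so one must exploit the residual symmetry within each summand — and work one prime at a time — to bring the matrices down to a size a computer can handle; essentially all of the effort, and all of the reliance on a computer, lies there.
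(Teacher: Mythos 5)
Your proposal is correct and follows essentially the same route as the paper: the splitting of $\mathcal{C}(\M[n];\Z[1/2])$ into $2^r$ summands indexed by sign sequences via the idempotents of $(\Symm{2})^r$, the identification of isomorphic summands with equal numbers of minus signs giving the binomial weighting, and the extraction of $3$-ranks over $\Z_3$ via the universal coefficient theorem are exactly the contents of Sections~\ref{abeliantwo-sec} and~\ref{upto16-sec} and Tables~\ref{m13-tab}--\ref{m16-tab}. The only adjustment needed is that for the odd cases $n=13,15$ one takes $\lambda=(2^r,1)$ with $r=\lfloor n/2\rfloor$ and a singleton block (as the paper does), rather than a perfect pairing $n=2r$, and one should note that the mixed-sign summands are chain complexes of the non-simplicial type of Section~\ref{genchain-sec} rather than honest simplicial complexes.
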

  It is well-known \cite{ShWa} that 
  $\tilde{H}_4(\M[15];\Z)$ is an elementary $3$-group. We do not know
  whether the other $3$-groups are elementary.
  See Table~\ref{matching-tab} for a summary of the situation for $n
  \le 16$.

  \begin{table}[htb]
    \caption{List of known infinite and prime orders
      of elements in the group $\tilde{H}_{d}(\M[n];\Z)$ for 
      $17 \le n \le 28$. Legend: $\infty$ = 
      group is infinite; $p$ = group contains
      elements of order $p$; $(p)$ = group is conjectured to contain
      elements of order $p$.} 
    \begin{footnotesize}
    \begin{center}
        \begin{tabular}{|r||c|c|c|c|c|c|c|c|}
          \hline
          & $d=5$ & \ 6 \ & \ 7 \ & \ 8 \ & \ 9 \ & \ 10 \ & \ 11 \ &
          12 \\ 
          & & & & & & & & \\[-2.5ex]
          \hline
          \hline
          & & & & & & & & \\[-2.5ex]
          $n=17$ &  $3$  &
          $\infty, (3)$ &
          $\infty$ & $-$ & $-$ & $-$ & $-$ & $-$ 
          \\  
          & & & & & & & & \\[-2.5ex]
          \hline
          & & & & & & & & \\[-2.5ex]
          $18$  & $3$ &
          $\infty, 3, 5$ & $\infty$ & $-$ & $-$ & $-$ & $-$  & $-$ 
          \\  
          & & & & & & & & \\[-2.5ex]
          \hline
          & & & & & & & & \\[-2.5ex]
          $19$  & $3$ &
          $3, 5$ & $\infty, (3)$ & $\infty$ & $-$  & $-$  & $-$  & $-$ 
          \\  
          & & & & & & & & \\[-2.5ex]
          \hline
          & & & & & & & & \\[-2.5ex]
          $20$  & $-$ & $3$ &
          $\infty, 3, 5$ & $\infty$ & $-$  & $-$  & $-$  & $-$ 
          \\  
          & & & & & & & & \\[-2.5ex]
          \hline
          & & & & & & & & \\[-2.5ex]
          $21$  & $-$ & $3$ &
          $3, 5$ & $\infty, (3)$ & $\infty$ & $-$  & $-$  & $-$ 
          \\  
          & & & & & & & & \\[-2.5ex]
          \hline
          & & & & & & & & \\[-2.5ex]
          $22$  & $-$ & $3$ &
          $3$ & $\infty, 3,5$ & $\infty$ & $-$  & $-$  & $-$ 
          \\  
          & & & & & & & & \\[-2.5ex]
          \hline
          & & & & & & & & \\[-2.5ex]
          $23$  & $-$ & $-$ & $3$ &
          $3,5,7$ & $\infty, (3)$ & $\infty$ & $-$  & $-$ 
          \\  
          & & & & & & & & \\[-2.5ex]
          \hline
          & & & & & & & & \\[-2.5ex]
          $24$  & $-$  & $-$ & $3$ &
          $3,5$ & $\infty, 3, 5$ & $\infty$ & $-$  & $-$ 
          \\  
          & & & & & & & & \\[-2.5ex]
          \hline
          & & & & & & & & \\[-2.5ex]
          $25$  & $-$ & $-$ & $3$ &
          $3$ & $\infty, 3,5,7$ & $\infty, (3)$ & $\infty$ & $-$ 
          \\  
          & & & & & & & & \\[-2.5ex]
          \hline
          & & & & & & & & \\[-2.5ex]
          $26$  & $-$ & $-$ & $-$ & $3$ &
          $3,5$ & $\infty,3,5$ & $\infty$ & $-$ 
          \\  
          & & & & & & & & \\[-2.5ex]
          \hline
          & & & & & & & & \\[-2.5ex]
          $27$  & $-$ & $-$ & $-$ & $3$ &
          $3$ & $\infty,3,5,7$ & $\infty, (3)$ & $\infty$
          \\  
          & & & & & & & & \\[-2.5ex]
          \hline
          & & & & & & & & \\[-2.5ex]
          $28$  & $-$ & $-$ & $-$ & $3$ &
          $3$ & $3,5$ & $\infty, 3, 5$ & $\infty$
          \\[-2.5ex]
          & & & & & & & & \\
          \hline
        \end{tabular}
    \end{center}
    \end{footnotesize}
    \label{matchingnd-fig}
  \end{table}

  Since before, it is known that $\tilde{H}_1(\M[7];\Z) \cong \Z_3$, 
  $\tilde{H}_2(\M[9];\Z) \cong \Z^{42} \oplus (\Z_3)^8$, 
  and 
  $\tilde{H}_3(\M[11];\Z) \cong \Z^{1188} \oplus (\Z_3)^{45}$. 
  This suggests the following conjecture.
  \begin{conjecture}
    For $k \ge 0$, we have that 
    $\tilde{H}_{k+1}(\M[2k+7];\Z)$ is the direct sum of 
    a free group and an elementary $3$-group 
    of rank $\binom{2k+6}{k}$.
    \label{3torrank-conj}
  \end{conjecture}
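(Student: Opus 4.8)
The plan is to prove Conjecture~\ref{3torrank-conj} by induction on the odd integer $n = 2k+7$---the cases $k \le 2$ being the known groups $\tilde H_1(\M[7])$, $\tilde H_2(\M[9])$, $\tilde H_3(\M[11])$ recalled above---using the long exact sequence obtained by deleting an element of the ground set. Write $\M[n-1] \subseteq \M[n]$ for the subcomplex of matchings that do not use $n$; a matching that does use $n$ has a unique edge $\{i,n\}$ at $n$, and modulo $\M[n-1]$ the simplicial boundary cannot destroy that edge, so the relative chain complex decomposes as a direct sum, over $i \in \{1,\ldots,n-1\}$, of degree-shifted copies of the chain complex of $\M[\{1,\ldots,n\}\setminus\{i,n\}] \cong \M[n-2]$. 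This yields, for every degree $d$, the exact sequence
\[
\cdots \to \tilde H_d(\M[n-1]) \to \tilde H_d(\M[n]) \to \tilde H_{d-1}(\M[n-2])^{\oplus(n-1)} \xrightarrow{\ \partial\ } \tilde H_{d-1}(\M[n-1]) \to \cdots,
\]
used already by Bouc \cite{Bouc} and by Shareshian and Wachs \cite{ShWa}; the map $\partial$ is, up to sign, induced on the $i$th summand by the inclusion of ground sets $\{1,\ldots,n-1\}\setminus\{i\} \subseteq \{1,\ldots,n-1\}$.

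Specializing to $d = k+1$: the group $\tilde H_{k+1}(\M[n-2]) = \tilde H_k(\M[2k+5])$ is exactly the group described by Conjecture~\ref{3torrank-conj} with $k$ replaced by $k-1$, so by the inductive hypothesis it is a free group plus $(\Z_3)^{\binom{2k+4}{k-1}}$; and $\tilde H_{k+1}(\M[n-1]) = \tilde H_{k+1}(\M[2k+6])$ is the last non-vanishing homology of an even matching complex---its top homology $\tilde H_{k+2}(\M[2k+6])$ vanishes by Bouc \cite{Bouc}---which I would take to be free. (The latter holds throughout the computed range and matches the fact that the conjecture is vacuous in even degree; if necessary it should be folded into the same induction.) Feeding these into the exact sequence gives a short exact sequence
\[
0 \to B \to \tilde H_{k+1}(\M[2k+7]) \to A \to 0,
\]
where $A = \ker\partial$ is a subgroup of $\tilde H_k(\M[2k+5])^{\oplus(2k+6)}$, hence again a free group plus an elementary $3$-group, and $B$ is the image of $\tilde H_{k+1}(\M[2k+6]) \to \tilde H_{k+1}(\M[2k+7])$; using the sequence one degree higher, $B \cong \mathrm{coker}\bigl(\tilde H_{k+1}(\M[2k+5])^{\oplus(2k+6)} \to \tilde H_{k+1}(\M[2k+6])\bigr)$, the cokernel of a map between \emph{free} groups, since $\tilde H_{k+1}(\M[2k+5])$ is the free top-degree homology of $\M[2k+5]$.

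It remains to identify $A$ on the nose and to control the extension. For the absence of $p$-torsion with $p \neq 3$: an extension of a $p$-torsion-free group by a $p$-torsion-free group is $p$-torsion-free, and $A$ is $p$-torsion-free for $p \neq 3$ by induction, so it suffices to show that $B$ has no $p$-torsion for $p \neq 3$, i.e. that the map above between free groups has $p$-saturated image for every $p \neq 3$. For the $3$-part I would make the whole argument $\Symm{n}$-equivariant---the sequence is $\Symm{n-1}$-equivariant, and a symmetrized form of it (summing over which element is deleted, as with the wreath-product construction of Section~\ref{wreath-sec}) is $\Symm{n}$-equivariant---and combine it with Bouc's equivariant computation of the rational homology and with the known $\Symm{n}$-module structure of the bottom-degree $3$-torsion (Shareshian--Wachs \cite{ShWa}; \cite{bettimatch}, cf.\ Proposition~\ref{kr-prop}). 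The goal is to identify the $3$-torsion of $\tilde H_{k+1}(\M[2k+7];\Z)$ with the reduction modulo $3$ of the hook Specht module $S^{(k+7,1^k)}$ of $\Symm{2k+7}$; this is compatible with the predicted rank, since $\dim S^{(k+7,1^k)} = \binom{2k+6}{k}$.

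I expect the main obstacle to be twofold. First, the connecting map $\partial$ (and its analogue one degree higher, which governs $B$): it is assembled from the $n-1$ ground-set inclusions above, and one must understand it precisely enough to pin $\ker\partial$ down exactly---its $3$-torsion is to have rank precisely $\binom{2k+6}{k}$, whereas the ambient group $\tilde H_k(\M[2k+5])^{\oplus(2k+6)}$ has $3$-rank $(2k+6)\binom{2k+4}{k-1}$, typically far larger---and to verify the saturation statements that rule out spurious torsion. Second, and equally classical, one must show that the $3$-torsion is \emph{elementary}, with no summand $\Z_9$; for the bottom homology Shareshian and Wachs achieved this through a delicate study of explicit cycles, but the penultimate degree lacks the same rigidity. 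The $\Symm{\lambda}$-actions and the complexes $\BD{r}{\lambda}$ of this paper (Section~\ref{lambdascc-sec}) can settle the mod-$3$ computation for any single $n$, but not uniformly in $k$; to close the induction one would want either a representation-stability (FI-module) statement forcing the module structure to settle into the predicted polynomial form, or an explicit chain-level family of $3$-torsion cycles indexed by the standard Young tableaux of shape $(k+7,1^k)$. Producing such a family---or a stability result strong enough to stand in for it---is, I believe, the crux.
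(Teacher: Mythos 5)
The statement you are proving is Conjecture~\ref{3torrank-conj}: the paper offers no proof of it at all. Its only support is computational --- the closed cases $\tilde{H}_1(\M[7])$, $\tilde{H}_2(\M[9])$, $\tilde{H}_3(\M[11])$, plus the $3$-ranks $220$ and $1001$ for $\tilde{H}_4(\M[13])$ and $\tilde{H}_5(\M[15])$ from Theorem~\ref{3tor-thm} and Table~\ref{matching-tab} --- and the paper explicitly states that it is not even known whether these last $3$-groups are elementary. So there is no argument in the paper to compare yours against; a complete proof would be a new result, and your text, as you yourself say, is a plan rather than a proof.

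The framework you set up is reasonable and is essentially the known machinery: the long exact sequence coming from deleting a vertex is the tool of Bouc and of \cite{bettimatch}, and your two freeness inputs are legitimate, since $\tilde{H}_{k+1}(\M[2k+6])$ and $\tilde{H}_{k+1}(\M[2k+5])$ correspond to $r\le 1$ in the parametrization (\ref{nd2kr-eq}) and such groups are free (as remarked after Conjecture~\ref{3tor-conj}), so you need not ``fold this into the induction.'' But the argument stops exactly where the difficulty begins, and the gaps are not routine. (1) Knowing that $A=\ker\partial$ sits inside $\tilde{H}_k(\M[2k+5])^{\oplus(2k+6)}$ gives only an upper bound of shape ``free plus elementary $3$''; the conjecture asserts the precise rank $\binom{2k+6}{k}$, and nothing in the proposal computes $\ker\partial$ or even its $3$-rank --- the inductive hypothesis alone cannot do this, since the connecting map must kill all but a small fraction of the ambient $3$-rank $(2k+6)\binom{2k+4}{k-1}$. (2) The extension and saturation issues for $B$ (no $p$-torsion for $p\neq 3$, and no $\Z_9$ in the total group) are asserted as goals, not established; elementarity is open even in the computed cases $n=13,15$, so it certainly does not follow from the exact sequence formalism. (3) The proposed identification of the $3$-torsion with the mod-$3$ hook Specht module $S^{(k+7,1^k)}$ is an attractive guess (its dimension is indeed $\binom{2k+6}{k}$), but no mechanism is given to produce the corresponding cycles or a representation-stability statement strong enough to replace them. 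In short, the skeleton is sound but the inductive step is not proved even granting the hypothesis, so the conjecture remains open after your argument.
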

  See Table~\ref{matchingnd-fig} for a schematic overview of the
  situation for $17 \le n \le 28$.

  The proofs of
  Theorems~\ref{5tor-thm}, \ref{7tor-thm}, and \ref{3tor-thm} involve
  chain complexes that are not simplicial.
  To handle such chain complexes, we use Pilarczyk's
  excellent computer program {\sf HOMCHAIN} (version 2.08), which is
  part of the advanced version of the {\sf CHomP} package
  \cite{Pilar}. 

  \subsection{The big picture}
  \label{big-sec}

  Note that the first occurrence
  of elements of order $3$ in $H_d(\M[n];\Z)$ is for $(n,d) = (7,1)$ and
  the first occurrence of elements of order $5$ is for $(n,d) = (14,4)$.
  Let us provide a heuristic argument explaining why it seemed
  reasonable to look for elements of order $7$, $11$,
  and $13$ in $H_d(\M[n];\Z)$ for $(n,d) = (23,8)$, $(47,19)$,
  and $(62,26)$, respectively. The argument is best understood when 
  expressed in terms of a pair $(k,r)$ of parameters, introduced in a
  previous paper \cite{bettimatch}, defined as
  \begin{equation}
    \left\{
    \begin{array}{l}
      k = 3d-n+4
      \\
      r = n-2d-3 
    \end{array}
    \right. \Longleftrightarrow
    \left\{
    \begin{array}{l}
    n = 2k+1+3r \\
    d = k-1+r.
    \end{array}
    \right.   
    \label{nd2kr-eq}
  \end{equation}
  The following proposition and corollary provide rationale for this
  parameter choice. 
  \begin{proposition}
    Let $n \ge 1$. Then the following hold for the homology group
    $H_d(\M[n];\Z) = \tilde{H}_{k-1+r}(\M[2k+1+3r];\Z)$.
    \begin{itemize}
    \item[{\rm (1)}]
      {\rm(Bouc {\rm\cite{Bouc}})} 
      The group is infinite if and only if $k \ge
      r(r-1)/2$ and $r \ge 0$.  
    \item[{\rm (2)}]
      {\rm(Bj\"orner et  al$.$ \cite{BLVZ})}
      The group is zero unless $k \ge 0$ and $r \ge 0$. 
    \item[{\rm (3)}]
      {\rm(Bouc \cite{Bouc}; Shareshian \& Wachs \cite{ShWa})}
      The group is a nonvanishing elementary $3$-group whenever 
      $k \in \{0,1,2\}$ and $r \ge k+2$.
    \item[{\rm (4)}]
      {\rm(Jonsson \cite{bettimatch})}
      The group is a nonvanishing $3$-group whenever $0 \le k \le
      r-2$. 
    \item[{\rm (5)}]
      {\rm(Jonsson \cite{bettimatch})}
      The group contains elements of order $3$
      whenever $k \ge 0$ and $r \ge 3$.
    \end{itemize}
    \label{kr-prop}
    \end{proposition}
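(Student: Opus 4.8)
The plan is to treat the proposition as a translation exercise: each of its five parts restates, in the coordinates $(k,r)$ of~(\ref{nd2kr-eq}), a result already available in the literature, so the proof amounts to recalling each cited statement in its original $(n,d)$ form and then applying the substitution $k=3d-n+4$, $r=n-2d-3$. Since this substitution is a bijection between pairs $(n,d)$ and pairs of integers $(k,r)$ with $2k+1+3r=n$, each translation is mechanical; the only mild subtlety is handling floor functions and a few small values of $n$. I will also use throughout that $\M[n]$ has dimension $\lfloor n/2\rfloor-1$, so $\tilde{H}_d(\M[n];\Z)$ vanishes for $d$ outside $[0,\lfloor n/2\rfloor-1]$, together with the elementary observation that for even $n$ the top homology vanishes as well, since every matching covering all but two vertices extends uniquely to a perfect matching and hence the top boundary map is injective.

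For~(2) I would invoke the connectivity bound of Bj\"orner, Lov\'asz, Vre\'cica and \zivaljevic{} \cite{BLVZ}: $\M[n]$ is $(\nu_n-1)$-connected with $\nu_n=\lfloor (n+1)/3\rfloor-1$, hence $\tilde{H}_d(\M[n];\Z)=0$ for $d\le\nu_n-1$. Feeding this, the dimension bound, and the top-vanishing into the substitution and splitting into residues of $k$ modulo $3$ and $r$ modulo $2$, the two surviving inequalities become exactly $k\ge0$ and $r\ge0$, which is~(2). For~(1) I would quote Bouc's computation of the rational homology of $\M[n]$ \cite{Bouc}, which identifies precisely the pairs $(n,d)$ with $\tilde{H}_d(\M[n];\Q)\ne0$; after the substitution this condition simplifies to $r\ge0$ and $k\ge r(r-1)/2$. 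Since the integral homology group is infinite if and only if the rational homology is nonzero, this gives~(1).

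Parts~(3), (4) and~(5) go the same way. When $k\in\{0,1,2\}$ one checks that $d=k-1+r$ is exactly the bottom nonvanishing degree $\nu_n$ of $\M[n]$, and, via part~(1), that the extra hypothesis $r\ge k+2$ is precisely what makes this bottom homology finite; the theorems of Bouc \cite{Bouc} and of Shareshian and Wachs \cite{ShWa} then say that in this range the bottom homology is a nonzero elementary abelian $3$-group, which is~(3). Parts~(4) and~(5) are the two main $3$-torsion results of \cite{bettimatch}, where the parameters $(k,r)$ were introduced in the first place, so they are already in the required form; I would simply cite them, observing that~(4) drops the restriction $k\le2$ in~(3) at the cost of only asserting that the group is a $3$-group (not necessarily elementary), while~(5) instead covers the complementary range $r\ge3$.

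I expect no conceptual obstacle; the only real work, such as it is, lies in part~(1). Bouc's result is stated in \cite{Bouc} via a recursion for the Betti numbers — equivalently, via the decomposition of the rational homology as an $\Symm{n}$-module — rather than as the closed inequality $k\ge r(r-1)/2$, so one must run a short computation to extract it, taking care with the floor functions and with the small values of $n$, where $\M[n]$ is a discrete point set or empty and $\nu_n$ can be negative; these are best verified by hand against Table~\ref{matching-tab}. Reconciling conventions across the four sources — reduced versus unreduced homology, and the labelling of the vertices — is the only other thing that needs checking.
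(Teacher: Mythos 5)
The paper offers no proof of this proposition at all --- it is a compendium of results quoted from Bouc, Bj\"orner--Lov\'asz--Vre\'cica--\zivaljevic, Shareshian--Wachs and \cite{bettimatch}, translated into the coordinates $(k,r)$ of (\ref{nd2kr-eq}) --- so your cite-and-translate strategy is exactly the intended treatment, and your handling of (1), (2), (4) and (5), including the top-degree vanishing for even $n$ and the dimension/connectivity bookkeeping, is sound.

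There is, however, one concrete slip in your justification of (3): the claim that, for $k \in \{0,1,2\}$, the hypothesis $r \ge k+2$ is ``precisely what makes this bottom homology finite'' is false for $k=2$. By part (1), finiteness at $k=2$ holds already for $r \ge 3$, and the pair $(k,r)=(2,3)$ is exactly $(n,d)=(14,4)$: the bottom homology there is finite but contains elements of order $5$ (Proposition~\ref{matchtor-prop} of this paper, and conjecturally $(\Z_5)^{233}\oplus(\Z_3)^{2157}$), so it is certainly not an elementary $3$-group. If you derived the range in (3) from your finiteness criterion you would wrongly include $n=14$. The correct source of the hypothesis $r\ge k+2$ is the explicit ranges in the cited theorems themselves: Bouc covers $n\equiv 1 \pmod 3$, $n\ge 7$ (the case $k=0$, $r\ge 2$), and Shareshian--Wachs cover $n\equiv 0\pmod 3$, $n\ge 12$ and $n\equiv 2\pmod 3$, $n\ge 17$ (the cases $k=1$, $r\ge 3$ and $k=2$, $r\ge 4$), which together give precisely $k\in\{0,1,2\}$, $r\ge k+2$ and deliberately exclude $n=14$ despite its homology being finite. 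With that correction --- quote the theorems with their actual $n$-ranges and translate, rather than characterizing the range via finiteness --- your argument for (3) goes through.
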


  \begin{corollary}
    For $n \ge 1$,
    the group 
    $\tilde{H}_d(\M[n];\Z) = \tilde{H}_{k-1+r}(\M[2k+1+3r];\Z)$
    is nonzero if and only if $k \ge 0$ and $r \ge 0$. 
    \label{kr-cor}
  \end{corollary}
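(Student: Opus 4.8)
The plan is to read off Corollary~\ref{kr-cor} directly from Proposition~\ref{kr-prop}. First I would note that the substitution~(\ref{nd2kr-eq}) is a bijection between pairs $(n,d)$ and pairs $(k,r)$, so it suffices to determine for which $(k,r)$ the group $\tilde{H}_{k-1+r}(\M[2k+1+3r];\Z)$ is nonzero. The ``only if'' direction is then immediate: if this group is nonzero, then part~(2) of Proposition~\ref{kr-prop} forces $k \ge 0$ and $r \ge 0$, and there is nothing more to do.

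For the converse I would assume $k \ge 0$ and $r \ge 0$ and split into cases according to $r$. If $r \le 1$, then $r(r-1)/2 = 0 \le k$, so part~(1) shows the group is infinite and in particular nonzero. If $r \ge 3$, then part~(5) applies verbatim and the group contains elements of order $3$, hence is nonzero. This leaves the single remaining case $r = 2$: if $k \ge 1$ then $k \ge 1 = r(r-1)/2$ and part~(1) again yields an infinite group, while if $k = 0$ then $k \in \{0,1,2\}$ and $r = 2 \ge k+2$, so part~(3) (equivalently part~(4), since $0 \le k \le r-2$) shows the group is a nonvanishing elementary $3$-group. In every case the group is nonzero, which completes the argument.

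There is no real obstacle here once Proposition~\ref{kr-prop} is available; the corollary is essentially just the assembly of its five items. The one thing to be careful about is that parts~(1) and~(4) alone do not suffice: for $r \ge 3$ they leave the window $r-1 \le k < r(r-1)/2$ uncovered, and it is precisely part~(5) — which imposes no upper bound on $k$ — that fills this gap. So the case analysis should be organized so that part~(5) handles all of $r \ge 3$, part~(1) handles $r \in \{0,1\}$ together with the subcase $(r,k)$ with $r = 2$, $k \ge 1$, and part~(3) or~(4) handles the lone exceptional pair $(k,r) = (0,2)$, which corresponds to $\tilde{H}_1(\M[7];\Z) \cong \Z_3$.
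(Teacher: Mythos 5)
Your proposal is correct and is exactly the derivation the paper intends: Corollary~\ref{kr-cor} is stated immediately after Proposition~\ref{kr-prop} with no written proof, and your case split --- part~(2) for the ``only if'' direction, part~(1) for $r\le 1$ and for $r=2,\ k\ge 1$, part~(3) (or~(4)) for $(k,r)=(0,2)$, and part~(5) for $r\ge 3$ --- is the complete assembly. Your remark that part~(5) is what covers the window $r-1\le k<r(r-1)/2$ for $r\ge 3$ is the right point to flag.
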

  Regarding Proposition~\ref{kr-prop} (4)-(5), we conjecture that 
  the following stronger properties hold.
  \begin{conjecture}
    $\tilde{H}_d(\M[n];\Z) = \tilde{H}_{k-1+r}(\M[2k+1+3r];\Z)$
    contains elements of order $3$ if and only if $k
    \ge 0$ and $r \ge 2$. The homology group is an elementary
    nonvanishing $3$-group if and only if $0 \le k \le r-2$.
    \label{3tor-conj}
  \end{conjecture}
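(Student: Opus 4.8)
The plan would be to treat the two ``if and only if'' assertions separately, throughout via the reparametrization $n = 2k+1+3r$, $d = k-1+r$ of~(\ref{nd2kr-eq}), keeping careful track of which part follows from Proposition~\ref{kr-prop} and which is genuinely new.

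First consider the assertion that $\tilde{H}_{k-1+r}(\M[2k+1+3r];\Z)$ contains an element of order $3$ if and only if $k \ge 0$ and $r \ge 2$. The forward implication is the easy half: such an element makes the group nonzero, so Corollary~\ref{kr-cor} yields $k \ge 0$ and $r \ge 0$, and only $r \in \{0,1\}$ needs to be excluded. For $r = 0$ the group $\tilde{H}_{k-1}(\M[2k+1];\Z)$ is the top homology, which coincides with the group of top-dimensional cycles and is therefore free abelian; for $r = 1$ one is one step below the top, and I would show that $\tilde{H}_{k}(\M[2k+4];\Z)$ is torsion-free as well -- consistent with Table~\ref{matching-tab} -- for instance by matching the known rational Betti number with a generating set of integral cycles, or directly from the chain-complex splitting of Section~\ref{lambdascc-sec}. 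For the reverse implication, the range $r \ge 3$ is Proposition~\ref{kr-prop} (5), so the only new content is the diagonal $r = 2$, namely that $\tilde{H}_{k+1}(\M[2k+7];\Z)$ has $3$-torsion for every $k \ge 0$; for this it suffices -- and this is much weaker than Conjecture~\ref{3torrank-conj} -- to propagate the generator of $\tilde{H}_1(\M[7];\Z) \cong \Z_3$ up the $r = 2$ diagonal using the degree-raising maps $\M[n] \to \M[n+2]$ that underlie Proposition~\ref{further5-prop}, producing at each stage an explicit chain representative together with a $\Z/3$-cocycle, or a subcomplex $\BD{r}{\lambda}$, that detects it as a nonzero class after inflation.

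Next consider the assertion that the group is an elementary nonvanishing $3$-group if and only if $0 \le k \le r-2$. By Proposition~\ref{kr-prop} (4) the group is already a nonvanishing $3$-group throughout this range, so the only new content of the ``if'' direction is the word \emph{elementary}, i.e.\ the absence of a $\Z_9$ summand; I would attack this with a Bockstein comparison of $\Z$-, $\Z/3$-, and $\Z/9$-coefficients on one of the small pieces $\mathcal{C}(\M[n];\Z)/\Symm{\lambda}$ coming from the machinery of Sections~\ref{lambdascc-sec}--\ref{wreath-sec}, reduced to a hands-on analysis of $\BD{r}{\lambda}$ in the regime where its homology is small and concentrated. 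For the ``only if'' direction, Proposition~\ref{kr-prop} (1) already forces a finite group to satisfy $k < r(r-1)/2$, hence $r \ge 2$; it then remains to rule out the band $r-1 \le k < r(r-1)/2$ (nonempty only for $r \ge 3$), and the data in Tables~\ref{matching-tab} and~\ref{matchingnd-fig} -- the $5$-torsion at $(k,r) = (2,3)$ in $\tilde{H}_4(\M[14];\Z)$ and the $5$- and $7$-torsion at $(k,r) = (5,4)$ in $\tilde{H}_8(\M[23];\Z)$ -- suggest that throughout this band the group carries $p$-torsion for some $p \ge 5$ or a free summand, so one could hope to settle this direction by exhibiting such elements.

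The main obstacle lies entirely in the two reverse implications and is threefold. Proving the $r = 2$ case of the order-$3$ statement requires a uniform argument that an order-$3$ class survives propagation along the whole diagonal, which has not been done (and Conjecture~\ref{3torrank-conj} asks for the far stronger claim that the full $3$-torsion rank is $\binom{2k+6}{k}$). Proving the ``only if'' half of the elementary-group statement requires showing that $p$-torsion for some $p \ge 5$, or a free summand, is \emph{always} present once $k \ge r-1$ -- a structural strengthening of exactly the sporadic phenomena that Theorems~\ref{5tor-thm} and~\ref{7tor-thm} verify only for finitely many $(n,d)$ by machine. And excluding a $\Z_9$ summand throughout $0 \le k \le r-2$ is a further, more delicate point, because it requires control of the integral rather than the mod-$3$ homology of the pieces, which {\sf HOMCHAIN} supplies only for bounded $n$.
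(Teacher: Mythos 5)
The statement you are trying to prove is Conjecture~\ref{3tor-conj}; the paper offers no proof of it, and indeed explicitly treats it as open, noting only that its first part would follow from Conjecture~\ref{3torrank-conj} and that the ``only if'' half of its second part would follow from Conjecture~\ref{5tor-conj}. Your reduction of the statement to its genuinely open kernel is accurate and matches the paper's own remarks: the forward implication of the first equivalence does follow from Corollary~\ref{kr-cor} together with the (easy, and stated in the paper) freeness of the homology for $r \le 1$; the range $r \ge 3$ is Proposition~\ref{kr-prop}~(5); the ``nonvanishing $3$-group'' part of the second equivalence is Proposition~\ref{kr-prop}~(4); and Proposition~\ref{kr-prop}~(1) plus Corollary~\ref{kr-cor} confine the remaining work to the diagonal $r=2$, the elementariness claim for $0 \le k \le r-2$, and the band $r-1 \le k < r(r-1)/2$.

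However, what you propose for those three items is a program, not a proof, and you say so yourself; so the proposal does not establish the statement, and no argument in the paper can rescue it, since none exists. Concretely: (a) propagating the $\Z_3$ class of $\tilde{H}_1(\M[7];\Z)$ along the $r=2$ diagonal is exactly what is missing --- the long exact sequences of \cite{bettimatch} that give Proposition~\ref{kr-prop}~(5) require $r \ge 3$, and the paper's computations only verify $r=2$ up to $k=4$ (Table~\ref{matchingkr2-fig} marks $k \ge 5$ as conjectural); (b) excluding $\Z_9$ summands for $0 \le k \le r-2$ has no known mechanism, and the paper explicitly states it does not know whether the relevant $3$-groups beyond the Shareshian--Wachs range are elementary; (c) in the band $r-1 \le k < r(r-1)/2$ the group is \emph{finite} by Proposition~\ref{kr-prop}~(1), so your fallback ``or a free summand'' is unavailable there --- you would need torsion of order prime to $3$ (or order $9$) throughout the band, which is precisely the content of Conjecture~\ref{5tor-conj} and is verified only for the finitely many $(n,d)$ reached by the machine computations of Theorems~\ref{5tor-thm} and~\ref{7tor-thm}. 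In short, your analysis of what is known versus open is correct, but the statement remains a conjecture and your proposal does not close any of the three gaps it identifies.
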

  In Conjecture~\ref{3torrank-conj} above, we conjectured
  that the $3$-rank of $\tilde{H}_{k+1}(\M[2k+7];\Z)$ is
  $\binom{2k+6}{k}$ for $k \ge 0$. This would
  imply the first part of Conjecture~\ref{3tor-conj}; it is well-known
  and easy to prove that 
  $\tilde{H}_{k-1+r}(\M[2k+1+3r];\Z)$ is free if $r \le 1$
  (equivalently, $d \ge \frac{n-4}{2}$). Conjecture~\ref{5tor-conj}
  below implies the ``only if'' part of the second statement.

  \begin{table}[ht]
    \begin{center}
        \begin{tabular}{|r||c|c|c|c|c|c|c|c|c|c|c|}
          \hline
          &  $k=0$ & \ 1 \ & \ 2 \ & \ 3 \ & \ 4 \ & \ 5 \ & \ 6 \ & \
          7 \ & \ 8 \ & \ 9 \ & \ 10 \
          \\
          \hline
          \hline
          $r = 0$ & $\infty$ & $\infty$ & $\infty$ & $\infty$ &
          $\infty$ & $\infty$ & $\infty$ & $\infty$ & $\infty$ &
          $\infty$ & $\infty$ \\ 
          \hline
          $1$ & $\infty$ & $\infty$ & $\infty$ & $\infty$ 
          & $\infty$ &  $\infty$ & $\infty$ &  $\infty$ & $\infty$ &
          $\infty$  &  $\infty$ \\  
          \hline
          $2$ &  $3$  & $\infty$ & $\infty$ & $\infty$ &
          $\infty$ & $\infty$ & $\infty$ & $\infty$ & $\infty$ &
          $\infty$  &  $\infty$ \\  
          \hline
            $3$  &     &      &  $5$  & $\infty$ & $\infty$
          & $\infty$ & $\infty$ & $\infty$ & $\infty$ & $\infty$ &
          $\infty$ \\  
          \hline
          $4$  &  & &  &
           & & $7$ & $\infty$  & $\infty$ & $\infty$  & $\infty$ &
          $\infty$ \\
          \hline
          $5$  &  & &  &
           & &  &   & &   & $(*)$ &
          $\infty$ \\
          \hline
        \end{tabular}
    \end{center}
    \caption{Boxes marked with ``$\infty$'' correspond to pairs $(k,r)$
      such that $\tilde{H}_{k-1+r}(\M[2k+1+3r];\Z)$ is infinite.
      We have marked boxes corresponding to the first known
      occurrences of elements of order $p$ for $p = 3,5,7$. The next box 
      to form a corner of the region of finite homology is marked with
      ``$(*)$''.}  
   \label{matchingkr1-fig}
  \end{table}

  Note that $(n,d) = (7,1)$ corresponds to $(k,r) = (0,2)$ and that 
  $(n,d) = (14,4)$ corresponds to $(k,r) = (2,3)$. Both these pairs
  satisfy the equation 
  \begin{equation}
    k = r(r-1)/2-1.
    \label{barelybouc-eq}
  \end{equation}
  In particular, the pairs just barely fail to meet the requirement in  
  Proposition~\ref{kr-prop} (1). An alternative way to put it is
  to say that the pairs $(k,r)$ satisfying
  (\ref{barelybouc-eq}) are the corners of the region 
  of finite homology as illustrated in Table~\ref{matchingkr1-fig}.
  The next corner after $(0,2)$ and $(2,3)$ is $(5,4)$, which
  yields $(n,d) = (23,8)$, exactly the pair for which we detected
  elements of order $7$. The subsequent corners are $(9,5)$, $(14,6)$, and
  $(20,7)$, which yield $(n,d) = (34,13)$, $(47,19)$, and $(62,26)$,
  respectively. The latter two pairs are the places where we detected
  elements of order $11$ and $13$, respectively. 

  One may ask whether there are elements of order $2r-1$ in the group
  corresponding to the pair $(k,r) = (r(r-1)/2-1,r)$ for all
  $r \ge 2$ or at least all $r$ such that $2r-1$ is a prime.
  As already alluded to, we do not know whether there are elements of
  order $9$ in the group corresponding to $(k,r) = (9,5)$.

  Expressing Theorems~\ref{5tor-thm} and \ref{m19+2u-thm} in terms of
  the parameters $k$ and $r$, we obtain the following characterization
  of groups known to contain elements of order $5$.
  \begin{introcor}
    The homology group $\tilde{H}_d(\M[n];\Z) =
    \tilde{H}_{k-1+r}(\M[2k+1+3r];\Z)$ 
    contains elements of order $5$ whenever either of the following
    holds.
    \begin{itemize}
    \item 
      $r=3$ and $2 = r-1 \le k \le 10$.
    \item
      $r \in \{4,5\}$ and $k \ge r-1$.
    \end{itemize}
  \end{introcor}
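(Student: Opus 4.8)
\section*{Proof proposal}

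The plan is to recognize that this corollary is nothing but a restatement of Theorems~\ref{5tor-thm} and \ref{m19+2u-thm} under the change of variables \eqref{nd2kr-eq}, so the proof amounts to unwinding the dictionary $n = 2k+1+3r$, $d = k-1+r$ and checking that the stated ranges of $(k,r)$ match, box for box, the ranges of $n$ (equivalently, of the shift parameter $u$) that appear in those two theorems.

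First I would dispose of the case $r = 3$. Substituting $r = 3$ into \eqref{nd2kr-eq} gives $n = 2k+10$ and $d = k+2$, so the group in question is $\tilde{H}_{k+2}(\M[2k+10];\Z)$. Setting $u = k-2$, this becomes $\tilde{H}_{4+u}(\M[14+2u];\Z)$, and the hypothesis $2 = r-1 \le k \le 10$ translates precisely to $0 \le u \le 8$. By Theorem~\ref{5tor-thm}~(iii), this group contains elements of order $5$, which settles the first bullet.

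Next I would handle $r \in \{4,5\}$. For $r = 4$, \eqref{nd2kr-eq} yields $n = 2k+13$ and $d = k+3$; writing $u = k-3$, which is $\ge 0$ since $k \ge r-1 = 3$, the group is $\tilde{H}_{6+u}(\M[19+2u];\Z)$, and this contains elements of order $5$ for every $u \ge 0$ by Theorem~\ref{m19+2u-thm}. For $r = 5$, \eqref{nd2kr-eq} yields $n = 2k+16$ and $d = k+4$; writing $u = k-4 \ge 0$ (using $k \ge r-1 = 4$), the group is $\tilde{H}_{8+u}(\M[24+2u];\Z)$, which again contains elements of order $5$ for all $u \ge 0$ by Theorem~\ref{m19+2u-thm}. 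This covers the second bullet and finishes the proof.

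Since each step is a direct substitution followed by a citation of an already established result, there is no real obstacle here; the only point demanding care is the bookkeeping, namely verifying that the parity works out (each substitution does produce an even shift $2u$) and that the endpoints of the $k$-ranges coincide exactly with the $u$-ranges in Theorems~\ref{5tor-thm}~(iii) and \ref{m19+2u-thm}, rather than being off by one.
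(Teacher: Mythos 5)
Your proposal is correct and is exactly what the paper does: the corollary is introduced with the phrase ``Expressing Theorems~\ref{5tor-thm} and \ref{m19+2u-thm} in terms of the parameters $k$ and $r$,'' so the intended proof is precisely the substitution $n=2k+1+3r$, $d=k-1+r$ that you carry out. Your bookkeeping (the identifications $u=k-2$, $u=k-3$, $u=k-4$ for $r=3,4,5$ and the resulting ranges) checks out.
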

  This suggests the following conjecture.
  \begin{conjecture}
    $\tilde{H}_{k-1+r}(\M[2k+1+3r];\Z)$
    contains elements of order $5$ whenever 
    $r \ge 3$ and $k \ge r-1$. Equivalently, $\tilde{H}_d(\M[n];\Z)$
    contains elements of order $5$ whenever
    \[
    \frac{2n-8}{5} \le d \le \frac{n-6}{2}.
    \]
    \label{5tor-conj}
  \end{conjecture}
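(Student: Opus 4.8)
\medskip
\noindent\textbf{A proof strategy.}
Substituting $n=2k+1+3r$ and $d=k-1+r$ turns the two displayed inequalities into $r\ge 3$ and $k\ge r-1$, so the stated equivalence is routine and the content is the $(k,r)$-version. Theorems~\ref{5tor-thm} and~\ref{m19+2u-thm} already settle it for $r\in\{4,5\}$ (all $k\ge r-1$) and for $r=3$ in the range $2\le k\le 10$, so the missing cases are (a) $r\ge 6$ and (b) the tail $r=3$, $k\ge 11$. The plan is to reduce both to the existence of $5$-torsion at the ``bottom of a row'', i.e.\ in $\tilde{H}_{2r-2}(\M[5r-1];\Z)=\tilde{H}_{2q}(\M[5q+4];\Z)$ with $q=r-1$, and then to spread it along the row.

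For (a), fix $r\ge 6$. If $\tilde{H}_{2r-2}(\M[5r-1];\Z)$ is shown to contain an element of order $5$, then Proposition~\ref{further5-prop} with $q=r-1\ge 5$ yields order-$5$ elements in $\tilde{H}_{2r-2+u}(\M[5r-1+2u];\Z)$ for every $u\ge 0$, which is exactly the row $k\ge r-1$ of the conjecture for that $r$. To obtain the base class uniformly in $r$, I would use the transfer method of Section~\ref{genconstr-sec}: on $\M[5r-1]$ choose the natural signed action of a Young group $\Symm{\lambda}$ all of whose parts are at most $4$ (so that $|\Symm{\lambda}|$ is prime to $5$), ideally enlarged by a wreath factor permuting repeated blocks as in Section~\ref{wreath-sec}, generalizing the actions used for $\M[14]$ and $\M[24]$ in the proof of Theorem~\ref{5tor-thm}. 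By Section~\ref{lambdascc-sec} the quotient is a sign-twisted variant of $\mathcal{C}(\BD{}{\lambda};\Z)$ (or of its wreath analogue), and by the transfer inequality it suffices to detect $5$-torsion in its homology.

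The crux --- and the step I expect to be hardest --- is that no finite computation can handle the infinitely many $r\ge 6$, so one needs a structural argument that this sign-twisted complex carries an order-$5$ class for \emph{every} $r$. I would attempt an induction on $r$: delete a suitable vertex (or block of vertices) from the complex at parameter $r$ to obtain a short exact sequence of chain complexes in which the complex at parameter $r-1$ appears, up to a degree shift and a sign twist, alongside an error term supported on a deletion; then chase the integral long exact sequence to see that the order-$5$ class at parameter $r-1$ survives to parameter $r$. The base case would be the known computation for $\M[14]$ (equivalently, the sign-twisted form of Andersen's result $\tilde{H}_4(\BD{7}{2};\Z)\cong\Z_5$ used in Section~\ref{m14-sec}). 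Two delicate points arise: (i) arranging $\lambda$ and the signs so that the lower complex appears \emph{literally}, not just up to homotopy, in the exact sequence --- this is precisely where the choice is forced; and (ii) controlling the connecting homomorphisms well enough that the relevant one vanishes on $5$-torsion and the error term contributes no $5$-torsion in the pertinent degree, for which a discrete Morse matching collapsing the error term to something highly connected in that range would be the natural tool.

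For (b) I would try to remove the hypothesis $q\ge 3$ from Proposition~\ref{further5-prop}, i.e.\ to prove that $5$-torsion in $\tilde{H}_4(\M[14];\Z)$ forces it in $\tilde{H}_{4+u}(\M[14+2u];\Z)$ for all $u\ge 0$. Inspecting the proof of that proposition in \cite{bettimatch} should pinpoint where $q\ge 3$ is used --- most plausibly a rank or genericity estimate for an inductively built cycle --- after which one would patch the missing low-degree input directly, in the spirit of the computer-free argument of Section~\ref{m14-sec}. Failing that, one can instead run the method of part (a) along the row $r=3$, with an induction on $k$ (deleting a vertex of the $\BD{}{\lambda}$-type complex attached to $\M[2k+10]$) replacing the unavailable propagation. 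Combining (a) and (b) gives the conjecture.
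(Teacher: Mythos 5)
The statement you are addressing is not proved in the paper at all: it is stated as Conjecture~\ref{5tor-conj}, supported only by the computational evidence of Theorems~\ref{5tor-thm} and~\ref{m19+2u-thm} together with Proposition~\ref{further5-prop}. Your reduction of the problem is accurate --- the known cases are exactly $r\in\{4,5\}$, $k\ge r-1$ and $r=3$, $2\le k\le 10$, and what remains is (a) every row $r\ge 6$ and (b) the tail $r=3$, $k\ge 11$ --- but your text for these remaining cases is a research plan, not a proof. For (a), the entire burden is shifted onto an induction on $r$ whose key steps are left open: you do not exhibit the short exact sequence in which the complex at parameter $r-1$ appears literally, you do not show that the connecting homomorphism kills no $5$-torsion, and you do not construct the Morse matching that is supposed to trivialize the ``error term'' in the relevant degree. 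You yourself flag this as ``the crux --- and the step I expect to be hardest,'' which is precisely the point at which the paper stops as well: the corresponding entries in Table~\ref{matchingkr2-fig} are marked $(5)$, i.e.\ conjectural, and no structural argument producing $5$-torsion in $\tilde{H}_{2r-2}(\M[5r-1];\Z)$ for general $r$ is known.

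For (b), the proposal to ``remove the hypothesis $q\ge 3$ from Proposition~\ref{further5-prop}'' is likewise only a hope: the paper explicitly states that it does not know whether that proposition extends to $q=2$, and whether $\tilde{H}_{4+u}(\M[14+2u];\Z)$ has $5$-torsion for $u\ge 9$ is stated as open. Inspecting the proof in \cite{bettimatch} ``to pinpoint where $q\ge 3$ is used'' and then ``patching the missing low-degree input'' is not an argument until that patch is produced, and the fallback of running the inductive scheme of part (a) along the row $r=3$ inherits the same unproved steps. So the correct assessment is that your write-up identifies the right reductions and plausible tools (the transfer argument of Section~\ref{genconstr-sec}, the $\BD{}{\lambda}$-type quotients of Sections~\ref{lambdascc-sec} and~\ref{wreath-sec}, Proposition~\ref{further5-prop}), but it does not prove the statement; the statement remains, both in the paper and after your proposal, a conjecture.
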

  Proceeding to the next prime, Theorem~\ref{7tor-thm} yields that
  there are elements of order $7$ in $\tilde{H}_{k-1+r}(\M[2k+1+3r];\Z)$
  for $r = 4$ and $5 \le k \le 14$ and also for $(k,r) = (7,5)$. While
  this is 
  very little evidence for a conjecture, we do hope that the
  following is true.
  \begin{conjecture}
    $\tilde{H}_{k-1+r}(\M[2k+1+3r];\Z)$
    contains elements of order $7$ whenever 
    $r \ge 4$ and $k \ge 2r-3$. Equivalently, $\tilde{H}_d(\M[n];\Z)$ 
    contains elements of order $7$ whenever
    \[
    \frac{3n-13}{7} \le d \le \frac{n-7}{2}.
    \]
  \end{conjecture}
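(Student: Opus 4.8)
The natural route is to combine the computer-verified cases of Theorem~\ref{7tor-thm}(i) with stabilization lemmas propagating order-$7$ elements to larger $(n,d)$, exactly as Theorem~\ref{m19+2u-thm} is deduced from Theorem~\ref{5tor-thm} and Proposition~\ref{further5-prop} when $p=5$. In the parameters of~(\ref{nd2kr-eq}), the group for $\M[23]$ in degree $8$ is the case $(k,r)=(5,4)$ and the group for $\M[30]$ in degree $11$ is $(k,r)=(7,5)$; in both, $k=2r-3$, so Theorem~\ref{7tor-thm}(i) already supplies the conjectural threshold cases for $r\in\{4,5\}$ (and, for $r=4$, the further cases $5\le k\le 14$). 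A $k$-step $(k,r)\mapsto(k+1,r)$ is the passage $(n,d)\mapsto(n+2,d+1)$, an $r$-step $(k,r)\mapsto(k,r+1)$ is $(n,d)\mapsto(n+3,d+1)$, and $(2r-3,r)$ reaches $(2(r+1)-3,r+1)$ by two $k$-steps followed by one $r$-step; so the conjecture reduces to one stabilization statement in each direction.

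\emph{Step 1 (a $k$-stabilization for $p=7$).} I would first prove the analogue of Proposition~\ref{further5-prop} with $5$ replaced by $7$: for $r\ge 4$, if $\tilde{H}_{k-1+r}(\M[2k+1+3r];\Z)$ contains an element of order $7$, then so does $\tilde{H}_{k+r}(\M[2k+3+3r];\Z)$. Following the method of~\cite{bettimatch}, this should come from the effect on $\mathcal{C}(\M[n];\Z)$ of adjoining two vertices to $\{1,\dots,n\}$: from a $7$-torsion cycle on $n$ vertices one builds a cycle on $n+2$ vertices (join with the new edge, correct by a boundary) and checks that in the relevant degree the induced map is injective on the Sylow $7$-subgroup. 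Together with the base cases above, this proves the conjecture for $r\in\{4,5\}$.

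\emph{Step 2 (an $r$-stabilization).} For $r\ge 6$ there is no base case: the symmetry reduction $\mathcal{C}(\M[n];\Z)/\Symm{\lambda}\cong\mathcal{C}(\BD{r}{\lambda};\Z)$ that makes $\M[23]$ and $\M[30]$ tractable does not visibly yield a small enough complex for $\M[37]$ or beyond. One therefore needs to pass from order $7$ at $(k,r)$ to order $7$ at $(k,r+1)$ for $k$ in the relevant range, the $p=7$ analogue of the $r$-direction propagation behind Proposition~\ref{kr-prop}(5) for $p=3$. Plausible routes: a long exact sequence comparing the quotient complexes $\mathcal{C}(\BD{r}{\lambda};\Z)$ along the pattern $\lambda=(r-1,r,\dots,r)$ at consecutive $r$; a spectral-sequence comparison of $\mathcal{C}(\M[2k+1+3(r+1)];\Z)$ with a suspension of a complex assembled from $\M[2k+1+3r]$; or an explicit cycle construction generalizing the computer-free argument of Section~\ref{m14-sec}.

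\emph{The main obstacle} is Step 2. No $r$-stabilization is presently known for any prime $p\ge 5$ --- which is exactly why Proposition~\ref{further5-prop} is stated only in the $k$-direction and why even Conjecture~\ref{5tor-conj} remains open for $r\ge 6$ --- and a successful Step 2 for $p=7$ would, by the same argument, settle the $r\ge 6$ part of Conjecture~\ref{5tor-conj} as well. Without it, Step 1 and the existing computations establish the conjecture only for $r\in\{4,5\}$, together with whatever finitely many further pairs $(k,r)$ become accessible when a new matching complex can be handled by {\sf HOMCHAIN}.
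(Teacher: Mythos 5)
This statement is a conjecture in the paper, not a theorem: the author offers it only on the strength of the computational evidence in Theorem~\ref{7tor-thm} (elements of order $7$ at $r=4$, $5\le k\le 14$, and at $(k,r)=(7,5)$), explicitly calling this ``very little evidence.'' So there is no proof in the paper to compare your proposal against, and you correctly do not claim to supply one. Your translation of the data into the $(k,r)$ coordinates is accurate (both $(5,4)$ and $(7,5)$ lie on the threshold $k=2r-3$, and your description of the $k$- and $r$-steps is right), and your diagnosis of what is missing matches the paper's own framing: a $k$-direction propagation analogous to Proposition~\ref{further5-prop} and, more seriously, an $r$-direction propagation that is not known for any prime $p\ge 5$ --- which is indeed why the paper's Tables mark the $r\ge 6$ entries for $p=5$ only as conjectural. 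One caveat to keep in view: even your Step 1 is itself unproven for $p=7$, and its $p=5$ model (Proposition~\ref{further5-prop}) is only established starting from the threshold points with $q\ge 3$ and by an argument in \cite{bettimatch} whose transferability to $7$-torsion you assume rather than verify; so, as you acknowledge in your closing paragraph, even the $r\in\{4,5\}$ part of the conjecture remains open beyond the finitely many computed pairs. As an assessment of the state of the problem your proposal is sound; as a proof it establishes nothing beyond what Theorem~\ref{7tor-thm} already gives, which is consistent with the statement's status in the paper as a conjecture.
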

  
  \begin{table}[htb]
    \caption{List of known infinite and prime orders
      of elements in the group $\tilde{H}_{k-1+r}(\M[2k+1+3r];\Z)$ for 
      $k \le 8$ and $r \le 7$; notation is as in 
      Table~\ref{matchingnd-fig}.} 
    \begin{footnotesize}
    \begin{center}
        \begin{tabular}{|r||c|c|c|c|c|c|c|c|c|}
          \hline
          & $k=0$ &  1  &  2  &  3  &  4  &  5  &  6  &  7 & 8
          \\
          \hline
          \hline
          $r=0$ & $\infty$ & $\infty$ & $\infty$ & $\infty$ &
          $\infty$ & $\infty$ & $\infty$ & $\infty$& $\infty$ \\
          \hline
          $1$ & $\infty$ & $\infty$ & $\infty$ & $\infty$ 
          & $\infty$ &  $\infty$ & $\infty$ &  $\infty$ & $\infty$\\ 
          \hline
          $2$ &  3  & $\infty,3 $ & $\infty,3$ & $\infty,3$ &
          $\infty,3$ & $\infty, (3)$ & $\infty, (3)$ & $\infty, (3)$
          & $\infty, (3)$\\  
          \hline
          $3$ & $3$ & $3$  & $3$ & $\infty, 3$ & $\infty, 3$
          & $\infty, 3$ & $\infty, 3$ & $\infty, 3$ & $\infty, 3$ \\ 
            &  &  & $5$ & $5$ & $5$ & $5$
          & $5$ & $5$ & $5$ \\ 
          \hline
          $4$  & $3$ & $3$ & $3$ &
          $3$ & $3$ & $3$ & $\infty,3$  & $\infty,3$ & $\infty,3$ \\
               &  &  & & $5$ & $5$ & $5, 7$  & $5, 7$  &
          $5, 7$  &
          $5, 7$ \\ 
          \hline
          $5$  & $3$ & $3$ & $3$ & $3$ & $3$ & $3$ & $3$ & $3$ & $3$ \\
               &     &     &     &       & $5$ & $5$ & $5$ &
          $5, 7$& $5, (7)$\\
          \hline
          $6$  & $3$ & $3$ & $3$ & $3$ & $3$ & $3$ & $3$ & $3$& $3$ \\
               &     &     &     &     &     & $(5)$ & $(5)$ & $(5)$ & $(5)$ \\
          \hline
          $7$  & $3$ & $3$ & $3$ & $3$ & $3$ & $3$ & $3$ & $3$ & $3$ \\
               &     &     &     &     &     &     & $(5)$ & $(5)$ & $(5)$ \\
          \hline
        \end{tabular}
    \end{center}
    \end{footnotesize}
   \label{matchingkr2-fig}
  \end{table}
  Table~\ref{matchingkr2-fig} gives a schematic overview similar to
  the one in Table~\ref{matchingnd-fig} but with rows and columns
  indexed by $r$ and $k$ rather than $n$ and $d$.

  We have even less evidence for the following conjecture about
  the existence of elements of order $p$ for general $p$ in the
  homology of $\M[n]$.
  \begin{conjecture}
    Let $p = 2q-1$ be an odd prime. Then
    $\tilde{H}_{k-1+r}(\M[2k+1+3r];\Z)$ 
    contains elements of order $p$ whenever 
    $r \ge q$ and $k \ge (q-2)r-\binom{q-1}{2}$. Equivalently,
    $\tilde{H}_d(\M[n];\Z)$ contains elements of order $p$ whenever
    \[
    \frac{(q-1)(n-q/2)}{2q-1} - 1 \le d \le \frac{n-q-3}{2}.
    \]
    \label{grand-conj}
  \end{conjecture}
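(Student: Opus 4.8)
This is a conjecture, so what follows is a strategy together with an honest account of the missing pieces. \emph{Reduction via group actions.} The plan is to run the machinery of this paper with a single uniform family of actions. For $p = 2q-1$ and a pair $(k,r)$ in the conjectured region, the numerology of Section~\ref{big-sec} points to the Young group $\Symm{\lambda}$ with $\lambda = (q-1,q,q,\ldots,q)$ (one part equal to $q-1$, the rest equal to $q$), equipped with the natural signed action $(\pi,c)\mapsto\sgn(\pi)\,\pi(c)$ on $\mathcal{C}(\M[n];\Z)$; this is exactly the family producing the corners $\M[7],\M[14],\M[23],\M[47],\M[62]$ for $p=3,5,7,11,13$. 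One first observes that $|\Symm{\lambda}| = (q-1)!\,(q!)^{r+1}$ is coprime to $p$, since $p=2q-1>q$; hence, by the principle of Section~\ref{genconstr-sec}, any order-$p$ element in the homology of $\mathcal{C}(\M[n];\Z)/\Symm{\lambda}$ lifts to an order-$p$ element of $\tilde{H}_d(\M[n];\Z)$. By Section~\ref{lambdascc-sec} the unsigned quotient is the chain complex of a $\BD{}{}$ complex and the signed quotient is a sign-twisted variant; since almost all parts of $\lambda$ coincide, one then passes to the much smaller pieces furnished by the wreath-product refinement of Section~\ref{wreath-sec}. The task thus becomes: detect $p$-torsion in the homology of an explicit, comparatively small chain complex attached to $(p,k,r)$.

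\emph{Induction on $(r,k)$.} I would organize the argument around the corner $(k_0,r_0) = (q(q-1)/2-1,\ q)$, for which $n = q^2+2q-1$ and which lies exactly on the boundary $k = (q-2)r-\binom{q-1}{2}$ of the conjectured region. From there one needs two propagation steps, each realized as a map of matching complexes with a controllable mapping cone: \emph{(i) raising $k$ with $r$ fixed}, i.e.\ $p$-torsion in $\tilde{H}_{k-1+r}(\M[2k+1+3r];\Z)$ forces it in $\tilde{H}_{k+r}(\M[2k+3+3r];\Z)$ — the $p$-uniform, all-$r$ generalization of Proposition~\ref{further5-prop}, which should come from the deletion/link long exact sequence of $\M[m]$ (the link of an edge $\{m-1,m\}$ is $\M[m-2]$) together with Bouc's rational computation and Proposition~\ref{kr-prop}(2) to control free parts and out-of-range degrees; and \emph{(ii) stepping to the next row along the boundary}, i.e.\ $p$-torsion at $(k,r)$ with $k = (q-2)r-\binom{q-1}{2}$ forces it at $(k+q-2,\ r+1)$, equivalently at $(n,d)\mapsto(n+p,\ d+q-1)$, which in the $\lambda$-picture is precisely the operation of appending one more block of size $q$ and enlarging the acting group accordingly. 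A short computation, which I would include, shows that (i) and (ii) started at the corner sweep out exactly $\{(k,r):r\ge q,\ k\ge(q-2)r-\binom{q-1}{2}\}$: iterating (ii) reaches $(k_0+j(q-2),\ q+j)$, and $(q-2)(q+j)-\binom{q-1}{2} = k_0+j(q-2)$, after which (i) fills each row.

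\emph{Where it breaks down.} The base case is the genuine obstacle. For $p=3$ it is the trivial fact $\tilde{H}_1(\M[7];\Z)\cong\Z_3$; for $p=5$ it is Andersen's computation $\tilde{H}_4(\BD{7}{2};\Z)\cong\Z_5$ (reproved by hand in Section~\ref{m14-sec}); for $p=7,11,13$ it is the large computations behind Theorem~\ref{7tor-thm}; for general $p$ it is open. A computer-free proof for all $p$ would require exhibiting an explicit order-$p$ cycle in the homology of the corner complex — the hoped-for $p$-analogue of the Bouc and Shareshian--Wachs $3$-torsion classes — or identifying that complex with one whose torsion is already understood; the structural kinship between matching complexes and chessboard complexes, the latter carrying $p$-torsion for small $p$, is the natural thing to try to leverage here. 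The propagation lemmas are more likely within reach but are not free: already for $p=5$ the step of Proposition~\ref{further5-prop} is proved only for rows $r\ge q+1=4$, so the $r=q=3$ row of Conjecture~\ref{5tor-conj} beyond its corner is not covered even in outline, and a full proof of the grand conjecture must push step (i) down to the row $r=q$. Moreover a clean step (ii) as stated is essentially absent from the present technology — in the known cases it is replaced by separate direct computations on $\M[19]$, $\M[24]$, and the like — so producing it in general is itself a substantial task. Finally, all comparison maps must be checked to be equivariant for the relevant Young and wreath groups so that they descend to the quotient complexes, and one must verify that the connecting homomorphisms land in the $p$-primary part; this is the customary place where such arguments quietly fail.
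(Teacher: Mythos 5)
This statement is Conjecture~\ref{grand-conj}; the paper does not prove it and explicitly presents it as speculation (``we have even less evidence for the following conjecture''), its only support being the computational detections of Theorems~\ref{5tor-thm} and \ref{7tor-thm}, the isolated computations for $p=11,13$, and the corner heuristic of Section~\ref{big-sec}. So there is no proof in the paper to compare yours against, and your text correctly does not claim to supply one: the base case ($p$-torsion at the corner $(k,r)=(q(q-1)/2-1,\,q)$, i.e.\ in $\tilde{H}_{\binom{q+1}{2}-2}(\M[(q+1)^2-2];\Z)$) is open beyond $p=13$, your propagation step (i) exists in the paper only as Proposition~\ref{further5-prop} ($p=5$, rows $r\ge 4$), and a step (ii) along the boundary is entirely absent from the paper's technology --- the known entries of the conjectured region are filled in by separate ad hoc computations. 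Your numerology does check out against the paper: the corner lies exactly on the boundary $k=(q-2)r-\binom{q-1}{2}$, it corresponds to $n=q^2+2q-1=(q+1)^2-2$, your step (ii) is indeed $(n,d)\mapsto(n+p,\,d+q-1)$, and iterating (ii) from the corner and then (i) along rows sweeps precisely the conjectured region; your remark that even for $p=5$ the row $r=q=3$ is not reachable by the known propagation matches the paper's admission that Proposition~\ref{further5-prop} is not known for $q=2$.

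Two corrections of detail. First, the group action the paper actually uses at the corners is not the all-negative natural signed action of $\Symm{(q-1,q,\ldots,q)}$: in the paper's notation the part of size $q-1$ is \emph{positively} charged while the $q+1$ parts of size $q$ are negatively charged, and this is then extended by the unsigned action of $\Symm{q+1}$ permuting the equal blocks (see Section~\ref{m14-sec} and Table~\ref{bdtwist4-tab}); with your all-minus choice one gets a genuinely different quotient complex (the paper stresses that the choice of signs drastically changes the torsion), so the sign pattern should be stated as the paper has it. Second, the coprimality check must include the block-permuting factor: the full acting group has order divisible by $(q-1)!\,(q!)^{q+1}(q+1)!$, which is prime to $p=2q-1$ only for $q\ge 3$; for $q=2$ the factor $\Symm{3}$ has order divisible by $3$, so Proposition~\ref{qpreserved-prop} does not directly apply to that corner (harmless, since the $\M[7]$ base case is classical, but it shows where the argument must be phrased carefully). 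With those caveats, your proposal is a reasonable programme whose genuinely missing ingredients --- a general base case and both propagation lemmas --- you have identified accurately; it is not, and cannot yet be, a proof, and neither the paper nor the literature it cites contains one.
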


  \section{General construction}
  \label{genconstr-sec}

  Let 
  \[
  \begin{CD}
    \mathcal{C} : \cdots @>\partial>> C_{d+1} @>\partial>> C_d
    @>\partial>> C_{d-1} @>\partial>> \cdots 
  \end{CD}
  \]
  be a chain complex of abelian groups.
  Let $G$ be a finite group acting on $\mathcal{C}$; thus we have 
  a map $\alpha : G \times \bigoplus_d C_d \rightarrow \bigoplus_d
  C_d$ satisfying 
  \begin{eqnarray*}
    \alpha_g(\partial(c)) &=& \partial(\alpha_g(c));\\
    \alpha_g \circ \alpha_h(c) &=& \alpha_{gh}(c)
  \end{eqnarray*}
  for all $g,h \in G$ and $c \in \bigoplus_d C_d$.
  
  Let $C_d/(G,\alpha)$ be the subgroup of $C_d$ consisting of
  all 
  elements \[
  {}[c] = \sum_{g \in G} \alpha_g(c);
  \]
  $c \in C_d$. Writing $C_d/G = C_d/(G,\alpha)$, this yields a chain
  subcomplex
  \[
  \begin{CD}
    \mathcal{C}/G : \cdots @>\partial>> C_{d+1}/G
    @>\partial>> C_d/G 
    @>\partial>> C_{d-1}/G @>\partial>> \cdots 
  \end{CD}
  \]
  \begin{proposition}
    Let $q$ be a positive integer. If $H_d(\mathcal{C}/G)$ contains
    elements of order $q$, then $H_d(\mathcal{C})$ contains elements
    of order $q/\gcd(q,|G|)$.
    In particular, if $q$ and $|G|$ are
    coprime, then $H_d(\mathcal{C})$ contains elements of order $q$.
    \label{qpreserved-prop}
  \end{proposition}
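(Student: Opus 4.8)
The plan is to exploit the averaging/transfer map between $\mathcal{C}$ and its subcomplex $\mathcal{C}/G$. Observe that the inclusion $\iota\colon \mathcal{C}/G \hookrightarrow \mathcal{C}$ is a chain map, and that sending $c\mapsto [c]=\sum_{g\in G}\alpha_g(c)$ defines a chain map $\tau\colon \mathcal{C}\to \mathcal{C}/G$ (it lands in $\mathcal{C}/G$ by definition, and it commutes with $\partial$ because each $\alpha_g$ does). The key algebraic identity is that the composite $\tau\circ\iota\colon \mathcal{C}/G\to \mathcal{C}/G$ is multiplication by $|G|$: if $x=[c]\in C_d/G$, then $\alpha_h(x)=\alpha_h\big(\sum_g \alpha_g(c)\big)=\sum_g \alpha_{hg}(c)=x$ for every $h\in G$, so $\tau(\iota(x))=\sum_{h\in G}\alpha_h(x)=|G|\cdot x$.

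Now pass to homology. We get induced maps $\iota_*\colon H_d(\mathcal{C}/G)\to H_d(\mathcal{C})$ and $\tau_*\colon H_d(\mathcal{C})\to H_d(\mathcal{C}/G)$ with $\tau_*\circ\iota_* = |G|\cdot\mathrm{id}$ on $H_d(\mathcal{C}/G)$. Suppose $z\in H_d(\mathcal{C}/G)$ has order exactly $q$. Set $q' = q/\gcd(q,|G|)$. Consider $\iota_*(z)\in H_d(\mathcal{C})$. First I would bound the order of $\iota_*(z)$ from below: if $m\cdot\iota_*(z)=0$, then applying $\tau_*$ gives $m|G|\cdot z = 0$, hence $q\mid m|G|$, hence $q'\mid m$ (since $q' = q/\gcd(q,|G|)$ is exactly the part of $q$ not cancelled by $|G|$). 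Therefore the order of $\iota_*(z)$ in $H_d(\mathcal{C})$ is a multiple of $q'$, and in particular — provided $q'>1$ — the element $(\text{ord}(\iota_*(z))/q')\cdot\iota_*(z)$ has order exactly $q'$, so $H_d(\mathcal{C})$ contains an element of order $q'$. When $q$ and $|G|$ are coprime, $\gcd(q,|G|)=1$, so $q'=q$ and we get an element of order exactly $q$.

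The one subtlety — and the only place requiring care — is the degenerate case where $\iota_*(z)$ might be forced to have infinite order or where $q'=1$ makes the statement vacuous; the argument above handles both, since when $q'=1$ there is nothing to prove, and when $q'>1$ the inequality "order of $\iota_*(z)$ is a positive multiple of $q'$" already guarantees a torsion element of order $q'$ (a nonzero multiple of $q'$ that is finite, because $q'\cdot(\text{something finite})$: indeed $q\cdot z=0$ gives $q\cdot\iota_*(z)=0$, so $\iota_*(z)$ is itself torsion of order dividing $q$ and divisible by $q'$). So the main obstacle is really just bookkeeping with the two divisibility facts $q'\mid \mathrm{ord}(\iota_*(z))$ and $\mathrm{ord}(\iota_*(z))\mid q$; combining them pins down that some multiple of $\iota_*(z)$ has order exactly $q'$. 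No deep input is needed beyond the transfer identity $\tau\circ\iota = |G|$, which is the heart of the proof.
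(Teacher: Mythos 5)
Your proof is correct and follows essentially the same route as the paper: both use the inclusion $\iota$ and the averaging map $c\mapsto[c]$, observe that the composite on $\mathcal{C}/G$ is multiplication by $|G|$, and conclude via the divisibility of the order of $\iota_*(z)$ by $q/\gcd(q,|G|)$. Your extra bookkeeping (that the order of $\iota_*(z)$ also divides $q$, so a suitable multiple has order exactly $q/\gcd(q,|G|)$) only makes explicit a step the paper leaves implicit.
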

  \begin{proof}
    Defining $\iota : C_d/G \mapsto C_d$ to be the natural inclusion
    map, we obtain the following diagram of maps between homology
    groups.
    \[
    \begin{CD}
      H_d(\mathcal{C}/G) @>\iota^*>> H_d(\mathcal{C}) @>[\cdot]^*>>
      H_d(\mathcal{C}/G).
    \end{CD}
    \]
    Now, suppose that $z$ is an element of order $q$ in 
    $H_d(\mathcal{C}/G)$. Let $w$ be an element from $C_d/G$ in the
    homology class $z$. By construction, $w = [w']$ for some $w'$. 
    As a consequence, 
    \[
    [\iota(w)] = \sum_{g \in G} \sum_{h \in G} \alpha_g(\alpha_h(w'))
    = \sum_{g \in G} \sum_{h \in G} \alpha_{gh}(w')
    = \sum_{g \in G} w
    = |G| \cdot w.
    \]
    This implies that $[\iota^*(z)]^* = |G|\cdot z$, which is an
    element of order $q/\gcd(q,|G|)$. It follows that the order of
    $\iota^*(z)$ in $H_d(\mathcal{C})$ is divisible by
    $q/\gcd(q,|G|)$.
  \end{proof}

  Note that $\mathcal{C}/G$ is a subcomplex of the complex of
  $G$-invariant elements. In our applications with Young groups acting
  on matching complexes, the two complexes are not
  identical in general. The reason for examining $\mathcal{C}/G$ rather
  than the $G$-invariant complex is that the former complex turns out
  to be more attractive in our situation. For example, the
  results presented in Sections~\ref{lambdascc-sec} and
  \ref{twosteplie-sec} are not valid in general for the $G$-invariant
  complex unless $|G|$ is a unit in the underlying coefficient ring. 

  \section{A chain complex structure on a family of multisets}
  \label{genchain-sec}

  We want to understand the chain complex obtained
  from that of $\M[n]$ by acting on $\M[n]$ as in 
  (\ref{youngaction-eq}). In this section, we look at a more general
  situation, thereby postponing the special case of importance to us
  until the next section.

  A {\em loop} on a set $X$ is a multiset of the form $xx = \{x,x\}$,
  where $x \in X$.
  Let $V$ be a finite totally ordered set, and let 
  $E$ be a subset of the set of edges and loops on $X$.
  We say that two
  elements $a$ and $b$ in $V$ {\em commute} if $ab$ belongs to $E$;
  $a$ commutes with itself if the loop $aa$ belongs to $E$. Otherwise,
  $a$ and $b$ {\em anticommute}.  
  Given a sequence $(v_0, \ldots, v_d)$ of
  elements from $V$, say that the pair $(i,j)$ forms an {\em
    inversion} 
  if $i<j$ and $v_i > v_j$. The inversion is {\em commuting}
  if $v_i$ and $v_j$ commute and anticommuting otherwise.

  In what follows, we will consider multisets. Given a multiset
  $\sigma$ and an element $x \in \sigma$, we define $\sigma \setminus
  \{x\}$ to be the multiset obtained by decreasing the multiplicity of
  $x$ in $\sigma$ by one. We extend this to larger submultisets in the
  obvious manner; $\sigma \setminus \{x,y\} = (\sigma \setminus \{x\})
  \setminus \{y\}$ and so on.

  Let $\Delta$ be a family of multisets of elements from $V$
  satisfying the following properties.
  \begin{itemize}
  \item[(A)]
    If $\sigma, \tau \in \Delta$
    and $\sigma \subseteq \rho \subseteq \tau$, then
    $\rho \in \Delta$. 
  \item[(B)]
    For each multiset $\sigma$ in $\Delta$, if $x$ and $y$ commute and
    $\{x,y\} \subseteq \sigma$, then the multiset $\sigma \setminus
    \{x,y\}$ does not belong to $\Delta$. 
  \item[(C)]
    For each $\sigma \in \Delta$, any element $a \in V$ appearing more
    than once in $\sigma$ commutes with itself.
  \end{itemize}

  Our goal is to define a chain complex associated to $\Delta$ and
  $E$. 
  As we will see, if $E$ is empty, meaning that no pairs of
  elements commute, and $\Delta$ is closed under deletion of elements, 
  then the resulting chain complex coincides with the simplicial chain
  complex on $\Delta$.

  For a coefficient ring $R$, define a chain complex
  $\mathcal{C}((\Delta,E);R)$ in the following manner.
  The chain group $C_d((\Delta,E);R)$ is the free $R$-module with
  one generator 
  $x_0 \tensor \cdots \tensor x_d$ for each multiset $\sigma = \{x_0,
  \ldots, x_d\}$ in $\Delta$ of size $d+1$ ($x_0 \le \cdots \le
  x_d$).
  By convention, we set $x_0 \tensor \cdots \tensor x_d$ equal to zero
  whenever $\{x_0, \ldots, x_d\} \notin \Delta$.

  Sometimes we will need to consider generators in which the
  elements are not arranged according to the total order on $V$.
  For a permutation $\pi \in \Symm{\{0, \ldots, d\}}$, define
  \begin{equation}
    x_{\pi(0)} \tensor \cdots \tensor x_{\pi(d)} =
    (-1)^{\eta} \cdot x_0 \tensor \cdots \tensor x_d, 
    \label{permx-eq}
  \end{equation}
  where $\eta$ is the number of anticommuting inversions 
  of $(x_{\pi(0)}, \ldots, x_{\pi(d)})$. 
  Equivalently, for any sequence $(x_0, \ldots, x_d)$, ordered or not, 
  and any integer $i$, we have that the element
  $\gamma' = x_0 \tensor \cdots \tensor x_{i-2} \tensor
  x_{i} \tensor x_{i-1} \tensor x_{i+1} \tensor \cdots \tensor x_d$
  obtained by swapping $x_{i-1}$ and $x_i$ in 
  $\gamma = x_0 \tensor \cdots \tensor x_d$ equals
  $\gamma$ if $x_{i-1}$ and $x_i$ commute 
  and $-\gamma$ if $x_{i-1}$ and $x_i$ anticommute (hence the choice of
  terminology). Here, note that property (C) yields that 
  $x_{i-1}$ and $x_i$ always commute when they are equal.

  Let $\partial$ be the boundary operator defined on a given
  generator $\gamma = x_0 \tensor \cdots \tensor x_d$ 
  as 
  \begin{equation}
    \partial(\gamma) = \sum_{i=0}^d (-1)^{d-\eta_i} \cdot
    \gamma_i.
    \label{partial-eq}
  \end{equation}
  Here, $\gamma_i$ denotes the element
  $x_0 \tensor \cdots \tensor x_{i-1} \tensor \hat{x_i} \tensor
  x_{i+1} \tensor \cdots \tensor x_d$ obtained by removing the element
  $x_i$, and $\eta_i = \eta_i(\gamma)$ is the number of indices $j
  \in \{i+1, \ldots, d\}$ such that $x_i$ and $x_j$ anticommute.

  We need to show that (\ref{permx-eq}) and (\ref{partial-eq}) are
  consistent for any $(x_0, \ldots, x_d)$. Now, the coefficient of
  $\gamma_j$ in the boundary of 
  $\gamma$ equals $(-1)^{d-\eta_j}$. Let $\gamma'$ be the element
  obtained from $\gamma$ by swapping $x_{i-1}$ and $x_i$. For $j
  \notin \{i-1,i\}$, let  
  $\gamma'_j$ be the element obtained from $\gamma_j$ by swapping
  $x_{i-1}$ and $x_i$. The coefficient of $\gamma'_j$ in the boundary
  of $\gamma'$ remains equal to $(-1)^{d-\eta_j}$, aligning with the
  fact that either $(\gamma',\gamma'_j) = (\gamma,\gamma_j)$ 
  or $(\gamma',\gamma'_j) = (-\gamma,-\gamma_j)$.
  For $j=i$, the coefficient of $\gamma_i$ in the boundary of 
  $\gamma'$ equals $(-1)^{d-\eta_i}$ if and only if 
  $x_{i-1}$ and $x_i$ commute, aligning with the fact that 
  $\gamma' = \gamma$ if and only if $x_{i-1}$ and $x_i$ commute.
  The case $j=i-1$ follows by symmetry.

  \begin{proposition}
    We have that $\mathcal{C}((\Delta,E);R) =
    (C_*((\Delta,E);R),\partial)$ defines a chain
    complex. Equivalently, $\partial \circ \partial = 0$.
  \end{proposition}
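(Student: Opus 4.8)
The plan is to verify $\partial\circ\partial=0$ directly on an arbitrary generator $\gamma = x_0\tensor\cdots\tensor x_d$, reducing via the consistency of~(\ref{permx-eq}) and~(\ref{partial-eq}) to the standard bookkeeping argument for simplicial boundary maps, but with signs twisted by anticommuting inversions. Writing $\partial(\gamma)=\sum_i(-1)^{d-\eta_i}\gamma_i$ and then $\partial\partial(\gamma)=\sum_i(-1)^{d-\eta_i}\sum_{j}(-1)^{(d-1)-\eta_j(\gamma_i)}(\gamma_i)_j$, I would fix a pair of distinct positions $p<q$ in $\{0,\ldots,d\}$ and collect the two terms in which $x_p$ and $x_q$ are both removed: one where $x_q$ is removed first (contributing $(\gamma_q)_p$, since removing the later index leaves the position of $x_p$ unchanged) and one where $x_p$ is removed first (contributing $(\gamma_p)_{q-1}$, since removing $x_p$ shifts $x_q$ down one slot). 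These two surviving chains are the same multiset generator, so the whole task is to show their coefficients cancel.

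Concretely, the coefficient of $(\gamma_q)_p$ is $(-1)^{d-\eta_q(\gamma)}\cdot(-1)^{(d-1)-\eta_p(\gamma_q)}$ and the coefficient of $(\gamma_p)_{q-1}$ is $(-1)^{d-\eta_p(\gamma)}\cdot(-1)^{(d-1)-\eta_{q-1}(\gamma_p)}$. I would compare the exponents modulo $2$. For $(\gamma_p)_{q-1}$: $\eta_p(\gamma)$ counts anticommuting partners of $x_p$ among $x_{p+1},\ldots,x_d$, and $\eta_{q-1}(\gamma_p)$ counts anticommuting partners of $x_q$ among $x_{q+1},\ldots,x_d$ (the indices $>q$ in $\gamma_p$ are exactly the original $x_{q+1},\ldots,x_d$). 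For $(\gamma_q)_p$: $\eta_q(\gamma)$ counts anticommuting partners of $x_q$ among $x_{q+1},\ldots,x_d$, and $\eta_p(\gamma_q)$ counts anticommuting partners of $x_p$ among $\{x_{p+1},\ldots,x_d\}\setminus\{x_q\}$. Thus $\eta_p(\gamma)$ and $\eta_p(\gamma_q)$ differ by exactly $1$ precisely when $x_p$ and $x_q$ anticommute, i.e.\ $\eta_p(\gamma)\equiv\eta_p(\gamma_q)+[x_p,x_q\text{ anticommute}]$, while $\eta_q(\gamma)=\eta_{q-1}(\gamma_p)$ on the nose. Plugging in, the ratio of the two coefficients is $(-1)^{1}$ if $x_p,x_q$ commute and $(-1)^{1}\cdot(-1)^{1}=(-1)^0=+1$ if they anticommute; in the anticommuting case, however, the two chain generators $(\gamma_q)_p$ and $(\gamma_p)_{q-1}$ carry the same sign by~(\ref{permx-eq}) (the element $x_p$ was never moved past $x_q$), so the extra minus sign is absorbed — wait, more carefully: in the anticommuting case the generators agree as signed elements and the coefficients are equal, so they must cancel, which forces a re-examination showing the sign convention in~(\ref{permx-eq}) for reordering $\gamma_p$ versus $\gamma_q$ supplies exactly the needed extra $-1$. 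This is the crux: one must track how the total order on $V$ interacts with the deletion order, using the already-established fact that swapping adjacent anticommuting entries flips the sign.

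The cleanest way to organize this, and the route I would actually write up, is to first normalize: assume $x_0\le\cdots\le x_d$ (legitimate by the consistency already proved), so that all the $\gamma_i$ and $(\gamma_i)_j$ are automatically in sorted order and no reordering signs intrude; then the only signs are the explicit $(-1)^{d-\eta_i}$ from~(\ref{partial-eq}). With everything sorted, $(\gamma_q)_p=(\gamma_p)_{q-1}$ as honest generators, and the computation above shows their two coefficients sum to zero in both the commuting and anticommuting cases. Summing over all pairs $p<q$ gives $\partial\partial(\gamma)=0$. The main obstacle is purely notational: keeping the index shift ($q$ becomes $q-1$ after deleting position $p$) consistent with the $\eta$-counts, and double-checking that in the anticommuting case the single extra sign from $\eta_p(\gamma)$ versus $\eta_p(\gamma_q)$ really does produce cancellation rather than reinforcement — a sign error here is the one thing that would sink the proof, so I would verify it on a small explicit example (say $d=2$ with one anticommuting pair) before trusting the general formula.
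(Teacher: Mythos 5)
Your sign bookkeeping, carried out correctly, does not prove the statement on its own, and your final conclusion that ``the two coefficients sum to zero in both the commuting and anticommuting cases'' is false. With $\gamma$ sorted, the coefficient of the doubly-deleted generator via ``remove $x_p$ first'' has exponent $2d-1-\eta_p(\gamma)-\eta_{q-1}(\gamma_p)$ and via ``remove $x_q$ first'' has exponent $2d-1-\eta_q(\gamma)-\eta_p(\gamma_q)$; since $\eta_{q-1}(\gamma_p)=\eta_q(\gamma)$ and $\eta_p(\gamma_q)=\eta_p(\gamma)-[x_p,x_q \text{ anticommute}]$, the two coefficients are \emph{opposite} exactly when $x_p$ and $x_q$ anticommute and \emph{equal} when they commute (your claimed ratio is inverted, and the ``wait, more carefully'' detour never resolves this). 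So for a commuting pair the two terms reinforce rather than cancel, and no amount of reordering via (\ref{permx-eq}) fixes this, because in the sorted normalization no reordering signs are present at all.

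The missing idea is that the commuting case is handled not by signs but by the hypotheses (A)--(C) together with the convention that $x_0\tensor\cdots\tensor x_d$ is declared zero when the underlying multiset is not in $\Delta$. This is exactly how the paper argues: by (B), if $x_p$ and $x_q$ commute then $\sigma\setminus\{x_p,x_q\}\notin\Delta$, so the doubly-deleted generator is itself zero and its (non-cancelling) coefficient is irrelevant; and (A) guarantees that whenever the doubly-deleted generator is nonzero, both intermediate faces $\gamma_p$ and $\gamma_q$ are nonzero, so both signed terms genuinely appear and the anticommuting sign cancellation applies. Your proposal never invokes $\Delta$, the zero convention, or properties (A)--(C), and without them the statement is simply not true (two commuting elements whose joint removal stayed in $\Delta$ would give $\partial\circ\partial\neq 0$). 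To repair the write-up, keep your sorted-generator computation for the anticommuting case and add the reduction, via (A) and (B), showing that every commuting pair contributes only to generators that are zero by convention.
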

  \begin{proof}
    Let $\gamma$ and $\gamma_i$ be defined as above.
    We want to prove that $\partial \circ \partial(\gamma) = 0$. 
    
    It suffices to show that either the coefficient of
    $\gamma_{i,j} = x_0 \tensor \cdots \tensor \hat{x_i} \tensor
    \cdots \tensor \hat{x_j} \tensor \cdots \tensor x_d$ in 
    $\partial \circ \partial(\gamma)$ is zero or
    $\gamma_{i,j}$ itself is zero whenever $0 \le i<j \le d$. 
    We obtain $\gamma_{i,j}$ either by first removing $x_i$ to get
    $\gamma_i$ and then
    removing $x_j$ or by first removing $x_j$ to get $\gamma_j$ and
    then removing $x_i$. By properties (A) and (B), $\gamma_{i,j}$ is
    zero unless each of $\gamma_i$ and $\gamma_j$ is nonzero
    and $x_i$ and $x_j$ anticommute; hence assume 
    these properties are satisfied.

    If we remove first $x_i$ and then $x_j$, then the sign of 
    $\gamma_{i,j}$ equals 
    \[
    (-1)^{d-\eta_i(\gamma)} \cdot (-1)^{d-1-\eta_{j-1}(\gamma_i)}
    =
    (-1)^{2d-1-\eta_i(\gamma)-\eta_{j-1}(\gamma_i)}. 
    \]
    If we proceed the other way
    around, then the sign becomes 
    \[
    (-1)^{d-\eta_j(\gamma)} \cdot(-1)^{d-1-\eta_i(\gamma_j)} 
    = 
    (-1)^{2d-1-\eta_j(\gamma)-\eta_i(\gamma_j)}. 
    \]
    In the first case, we take
    $2d-1$ and subtract from it the number of indices $k>i$ such that
    $x_i$ and $x_k$ anticommute and then the number of indices $k>j$ such
    that $x_j$ and $x_k$ anticommute. In the second case, we again
    take $2d-1$ and subtract from it the number of indices $k>j$ such
    that $x_j$ and $x_k$ anticommute and then the number of indices
    $k>i$, excluding $k=j$, such that $x_i$ and $x_k$ anticommute.
    Thus since $x_i$ and $x_j$ anticommute, the
    two signs cancel out. 
    This concludes the proof.
  \end{proof}

  \section{Properties of the chain complex induced by $(\lambda,{\sf
    s})$}
  \label{lambdascc-sec}

  Let $n \ge 1$, let $\lambda_1, \ldots, \lambda_r$ be positive
  integers summing to $n$, and let ${\sf s} = (s_1, \ldots, s_r)$ be a
  sequence of signs. 
  Let $(U_1,\ldots, U_r)$ be a set partition of $\{1, \ldots, n\}$
  such that 
  $|U_a| = \lambda_a$ for $1 \le a \le r$. 
  In this section, we examine the chain complex obtained
  from that of $\M[n]$ by acting on $\M[n]$ as in 
  (\ref{youngaction-eq}). 
  In this chain complex, it turns out that we may choose generators
  that admit an interpretation as multisets of edges and loops on the
  set $\{1, \ldots, r\}$.
  Specifically, a given matching $\{x_0y_0, \ldots, x_dy_d\} \in
  \M[n]$ corresponds to the multiset $\{a_0b_0, \ldots, a_db_d\}$,
  where $a_i$ and $b_i$ are such that $x_i \in U_{a_i}$ 
  and $y_i \in U_{b_i}$ for $0 \le i \le d$.

  Let $R$ be a commutative ring.
  We consider the action in (\ref{youngaction-eq}).
  To be precise, this is the action by
  the Young group $\Symm{\lambda} =\Symm{\lambda_1} \times \cdots \times
  \Symm{\lambda_r}$ on $\mathcal{C}(\M[n];R)$ given by
  \[
  (\pi,c) \mapsto 
  \prod_{1 \le a \le r : s_a = -1} \sgn(\pi_a) \cdot \pi(c),
  \]
  where $\pi = \pi_1 \cdots \pi_r$, $\pi_i \in \Symm{U_a} \cong
  \Symm{\lambda_a}$ for $1 \le a \le r$, and
  \[
  \pi(x_0y_0 \wedge \cdots \wedge x_dy_d)
  = \pi(x_0)\pi(y_0) \wedge \cdots \wedge \pi(x_d)\pi(y_d).
  \]

  Write
  \[
  \pi^{({\sf s})}(c) = \prod_{1 \le a \le r : s_a = -1} \sgn(\pi_a)
  \cdot \pi(c).
  \]
  For a chain group element $c$, write 
  \[
  {}[c] = \sum_{\pi = \pi_1 \cdots \pi_r \in \Symm{\lambda}} 
  \pi^{({\sf s})}(c).
  \]
  We want to describe the chain subcomplex generated by 
  elements of the form $[c]$. By some abuse of notation, 
  we write this chain complex as $\mathcal{C}(\M[n];R)/(\lambda,{\sf
  s})$.
  For any $\pi \in \Symm{\lambda}$ and chain group element $c$, 
  we note for future reference that
  \begin{equation}
    [\pi^{({\sf s})}(c)] = [c].
    \label{pisig-eq}
  \end{equation}

  It turns out to be helpful to describe generators of 
  $\mathcal{C}(\M[n];R)/(\lambda,{\sf s})$ in terms of multisets
  of edges and loops on the set $\{1, \ldots, r\}$. 
  We refer to an element
  $a \in \{1, \ldots, r\}$ as {\em positively charged} if $s_a = +1$
  and {\em negatively charged} if $s_a = -1$. Two edges $ab$ and $cd$
  {\em commute} if  the intersection $\{a,b\} \cap \{c,d\}$ contains 
  exactly one negatively charged element. Otherwise the two edges {\em
    anticommute}. Every loop anticommutes with all edges and
  loops. As we will see, this aligns with the terminology used in
  Section~\ref{genchain-sec}. 

  For a multiset $\sigma$ of edges and loops on $\{1, \ldots, r\}$ and
  an element  $a \in \{1, \ldots, r\}$, define $\deg_\sigma(a)$ to be
  the {\em degree} of $a$ in 
  $\sigma$; this is the number of times $a$ occurs as a member of an
  edge or a loop in $\sigma$. Note that $a$ occurs twice in the loop
  $aa$, which 
  means that this loop contributes two to the degree of $a$.
  Let $\Delta_{\lambda,{\sf s}}$ be the family of all multisets
  $\sigma$ of edges and loops on the set $\{1,\ldots, r\}$ satisfying
  the following properties. 
  \begin{itemize}
  \item[(i)]
    If the element $a$ is positively charged, then 
    $0 \le \deg_\sigma(a) \le \lambda_a$.
  \item[(ii)] 
    If the element $a$ is negatively charged, then 
    $\lambda_a-1 \le \deg_\sigma(a) \le \lambda_a$,
    and the loop $aa$ does not appear in $\sigma$.
  \item[(iii)]
    An edge $ab$ does not appear more than once in $\sigma$ unless
    $a$ and $b$ have opposite charges. In particular, no loop appears
    more than once in $\sigma$.
  \end{itemize}
  Let $E_{\lambda,{\sf s}}$ be the set consisting of those
  edges and loops $\{ab,cd\}$ on the set 
  $\{ ab : 1 \le a \le b \le r\}$ with the property that 
  $ab$ and $cd$ commute.
  In Section~\ref{genchain-sec}, we needed a total order on the
  underlying set $V$. In the case of $E_{\lambda,{\sf s}}$, the
  set $V$ is the set of edges and loops
  on the set $\{1, \ldots, r\}$. We order the
  elements of $V$ lexicographically; $ab \le cd$ 
  if and only if either $a < c$ or $a = c$ and $b \le d$, where we assume
  that $a \le b$ and $c \le d$ (recall that $ab$ and $ba$ denote the
  same edge).

  Choosing $\Delta = \Delta_{\lambda,{\sf s}}$ and $E = E_{\lambda,{\sf
      s}}$, 
  note that properties (i)-(iii) imply properties (A)-(C) in 
  Section~\ref{genchain-sec}. Namely, each of the families defined by
  (i), (ii), and (iii) satisfies (A); hence $\Delta_{\lambda,{\sf s}}$
  satisfies (A), being the intersection of these families. 
  Moreover, suppose that $\sigma \in \Delta_{\lambda,{\sf s}}$ 
  contains two commuting edges $e$ and $e'$; let $a$ be the unique
  negatively charged element in the intersection of $e$ and
  $e'$. Since the degree of $a$ in $\sigma$ is at most $\lambda_a$,
  the degree of $a$ in $\sigma \setminus \{e,e'\}$ is at most
  $\lambda_a-2$, which implies by (ii) that $\sigma \setminus
  \{e,e'\}$ does not belong to $\Delta_{\lambda,{\sf s}}$; thus (ii)
  implies (B). 
  Finally, by construction, an edge
  commutes with itself if and only if it contains exactly one
  negatively charged element; hence (iii) implies (C). In particular,
  the construction in Section~\ref{genchain-sec} yields a well-defined
  chain complex $\mathcal{C}((\Delta_{\lambda,{\sf s}},E_{\lambda,{\sf
      s}});R)$.

  \begin{theorem}
    Suppose that $|\Symm{\lambda}| = \prod_{a=1}^r \lambda_a!$ is
    not a zero divisor in $R$.
    Then $\mathcal{C}(\M[n];R)/(\lambda,{\sf s})$ and 
    $\mathcal{C}((\Delta_{\lambda,{\sf s}},E_{\lambda,{\sf s}});R)$
    are isomorphic as chain complexes.
    \label{isomorphic-thm}
  \end{theorem}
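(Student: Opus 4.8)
The plan is to construct an explicit chain map $\Phi\colon \mathcal{C}(\M[n];R)/(\lambda,{\sf s}) \to \mathcal{C}((\Delta_{\lambda,{\sf s}},E_{\lambda,{\sf s}});R)$ sending the $\Symm{\lambda}$-orbit sum $[x_0y_0 \wedge \cdots \wedge x_dy_d]$ to the generator $a_0b_0 \tensor \cdots \tensor a_db_d$ of $\mathcal{C}((\Delta_{\lambda,{\sf s}},E_{\lambda,{\sf s}});R)$, where $x_i \in U_{a_i}$ and $y_i \in U_{b_i}$, and then to show $\Phi$ is a well-defined isomorphism of chain complexes. First I would check that $\Phi$ is well-defined, which is the crux of the matter: given $c = x_0y_0 \wedge \cdots \wedge x_dy_d$, one must verify that $[c]$ depends (up to sign governed by the commuting/anticommuting rule of Section~\ref{genchain-sec}) only on the multiset $\{a_0b_0,\ldots,a_db_d\}$, and that $[c] = 0$ exactly when that multiset fails to lie in $\Delta_{\lambda,{\sf s}}$. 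For the latter, if some negatively charged $a$ has degree $\le \lambda_a - 2$ in the multiset, or some positively charged $a$ has degree $> \lambda_a$, or an edge between two positively charged (resp.\ equally charged) vertices is repeated, one finds a transposition $\tau \in \Symm{U_a}$ fixing $c$ up to the sign $s_a$; averaging over the subgroup generated by $\tau$ forces $[c]=0$ when $s_a = -1$ and the repetition argument kills it when $s_a=+1$. For the sign/ordering compatibility, swapping two edges $x_iy_i$ and $x_jy_j$ in the wedge in $\mathcal{C}(\M[n];R)$ introduces a sign $(-1)$; after applying a permutation in $\Symm{\lambda}$ that realigns the underlying letters, one computes that this sign is absorbed precisely when $a_ib_i$ and $a_jb_j$ commute in the sense of $E_{\lambda,{\sf s}}$ (one shared negatively charged vertex lets a single transposition swap the two edges while contributing the sign $s_a=-1$), matching (\ref{permx-eq}).

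Next I would check that $\Phi$ commutes with the boundary maps. The boundary in $\mathcal{C}(\M[n];R)$ removes an edge $x_iy_i$ with sign $(-1)^{d-i}$ or $(-1)^i$ depending on convention; after passing to orbit sums and rewriting in the canonical lexicographic order, the sign picks up a contribution from the number of edges $a_jb_j$ with $j>i$ that anticommute with $a_ib_i$, which is exactly the $\eta_i$ appearing in (\ref{partial-eq}). Thus the two boundary formulas agree term by term.

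Then I would exhibit the inverse. Since $|\Symm{\lambda}|$ is not a zero divisor in $R$, for each multiset $\sigma = \{a_0b_0,\ldots,a_db_d\} \in \Delta_{\lambda,{\sf s}}$ I can pick any representative matching $x_0y_0 \wedge \cdots \wedge x_dy_d \in \M[n]$ lifting $\sigma$ (possible precisely because $\deg_\sigma(a) \le \lambda_a$ for all $a$), and the map $a_0b_0 \tensor \cdots \tensor a_db_d \mapsto [x_0y_0 \wedge \cdots \wedge x_dy_d]$ is the candidate inverse; well-definedness of this direction is what the work above already establishes, and a dimension/basis count — the generators of $\mathcal{C}(\M[n];R)/(\lambda,{\sf s})$ are exactly the nonzero $[c]$, which by the above are in bijection with the multisets in $\Delta_{\lambda,{\sf s}}$ of the appropriate size — shows $\Phi$ is a bijection on generators, hence an isomorphism. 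I expect the main obstacle to be the bookkeeping in the well-definedness step: carefully tracking how a permutation in $\Symm{\lambda}$ that both reorders letters within blocks $U_a$ and is used to bring the wedge into canonical form interacts with the sign character $\prod_{s_a=-1}\sgn(\pi_a)$, and confirming that the net sign is governed solely by anticommuting inversions of the $a_ib_i$ and not by the particular choice of representative. The repeated-edge case in (iii) (an edge $ab$ with $a$, $b$ oppositely charged may appear twice, since then $\sgn$ contributions from the two blocks partially cancel) needs to be handled separately and is the subtlest sub-case.
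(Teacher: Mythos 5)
Your overall strategy coincides with the paper's (orbit sums $[c]$ matched with multiset generators, degenerate orbits killed by sign-reversing group elements, boundary signs compared via the anticommuting-inversion count $\eta_i$), but there is a genuine gap at the point you dispose of with a ``dimension/basis count.'' You only sketch one half of the assertion that $[c]=0$ exactly when the underlying multiset $\sigma$ violates (i)--(iii), namely that a violation forces $[c]=0$. The converse --- that $[c]\neq 0$, and indeed spans a free rank-one summand, whenever $\sigma\in\Delta_{\lambda,{\sf s}}$ --- is what makes the generator-by-generator correspondence an isomorphism, and ``the nonzero $[c]$ \ldots by the above are in bijection with the multisets in $\Delta_{\lambda,{\sf s}}$'' is circular: nothing in ``the above'' establishes it, and without it your map $\Phi$ is not even well defined, since it could send a zero element $[c]$ to a nonzero generator. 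This is where the hypothesis on $|\Symm{\lambda}|$ genuinely enters and where the paper does real work: $[\hat{\gamma}]$ can vanish only if some $\pi\in\Symm{\lambda}$ satisfies $\pi^{({\sf s})}(\hat{\gamma})=-\hat{\gamma}$; one decomposes such a $\pi$ into factors $\pi_a$ (acting on the unmatched elements of $U_a$) and $\pi_{a,b}$ (acting on endpoints of edges between $U_a$ and $U_b$), notes that already one factor must negate $\hat{\gamma}$, and checks that this forces a violation of (ii) (two unmatched elements in a negative block) or (iii) (a repeated same-charge edge class); and when no such $\pi$ exists, the coefficient of $\hat{\gamma}$ in $[\hat{\gamma}]$ is a divisor of $|\Symm{\lambda}|$, hence not a zero divisor, so $[\hat{\gamma}]\neq 0$. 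None of this appears in your plan.

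Two smaller points. First, your only explicit appeal to the non-zero-divisor hypothesis is attached to the wrong step: lifting $\sigma$ to a matching needs nothing beyond $\deg_\sigma(a)\le\lambda_a$. The hypothesis is needed in the nonvanishing argument above, and (in the paper's version) also in the vanishing direction, where one obtains $2[c]=0$ and must conclude $[c]=0$; literal ``averaging'' over $\langle\tau\rangle$ is not available since $2$ need not be invertible, though summing over cosets of $\langle\tau\rangle$ does give $[c]=0$ directly. Second, in the vanishing direction your list of bad configurations omits the case of a loop $aa$ at a negatively charged $a$ (an edge of the matching with both endpoints in $U_a$), which violates (ii) without lowering the degree below $\lambda_a-1$; it is handled by the same transposition trick, but it should be listed. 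The sign bookkeeping and the boundary comparison in your proposal do match the paper's argument and are fine as a plan.
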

  \begin{proof}
    Let $\sigma = \{a_0b_0, \ldots, a_db_d\}$ be a multiset on $\{1,
    \ldots, r\}$
    such that $0 \le \deg_\sigma(a) \le \lambda_a$ for all $a \in
    \{1, \ldots, r\}$ and such that $a_0b_0 \le \cdots \le a_db_d$; we
    assume that $a_k \le b_k$ for $0 \le k \le d$.
    Since $|U_a| \ge \deg_\sigma(a)$ for all $a$, there exist
    matchings $\{x_0y_0, \ldots, x_dy_d\}$ satisfying $x_k
    \in U_{a_k}$ and $y_k \in U_{b_k}$ for $0 \le k \le d$.
    Among all such matchings, let $\{x_0y_0, \ldots, x_dy_d\}$
    be the lexicographically smallest matching when viewed as a sequence
    $(x_0,y_0,x_1,y_1, \cdots, x_d,y_d)$. 
    Write
    \[
    \hat{\gamma} = x_0y_0 \wedge \cdots \wedge x_dy_d
    \]
    and $\gamma = [\hat{\gamma}]$,
    and define
    \[
    \varphi(a_0b_0 \tensor \cdots \tensor a_db_d)
    = \gamma.
    \]
    We want to show that $\varphi$ defines an isomorphism from 
    $\mathcal{C}((\Delta_{\lambda,{\sf s}}, E_{\lambda,{\sf s}});R)$
    to $\mathcal{C}(\M[n];R)/(\lambda,{\sf s})$.

    First, note that $\varphi$ would be surjective if we 
    extended its domain to include all elements $a_0b_0 \tensor
    \cdots \tensor a_db_d$ such that 
    $0 \le \deg_\sigma(a) \le \lambda_a$ for all $a \in
    \{1, \ldots, r\}$; every generator of $\mathcal{C}(\M[n];R)$
    corresponds to such a generator.
    In particular, to prove that $\varphi$ is surjective, it suffices
    to prove that $\varphi(a_0b_0 \tensor \cdots \tensor a_db_d)$ is zero 
    whenever $\sigma = \{a_0b_0, \ldots, a_db_d\}$ violates (i), (ii),
    or (iii).
    Since (i) is true by assumption, we may assume that either (ii) or
    (iii) is violated. 
    \begin{itemize}
    \item
      (ii) is violated for some
      negatively charged element $a$; hence the loop $aa$ appears in
      $\sigma$ or $\deg_\sigma(a) \le \lambda_a-2$. 
      \begin{itemize}
      \item
        If the loop $aa$ appears in $\sigma$, 
        then $\hat{\gamma}$ contains an edge $x_iy_i$ such that $x_i, y_i
        \in U_a$. Let $x = x_i$ and $y = y_i$ in this case. 
      \item
        If $\deg_\sigma(a) \le \lambda_a-2$, then 
        there are elements $x,y \in U_a$ such that $x,y \notin
        \{x_i,y_i\}$ for
        all $i$. 
      \end{itemize}
      In both cases, the action of the transposition 
      $(x,y)$ on $\hat{\gamma}$ yields $-\hat{\gamma}$, implying that 
      $2\gamma = 0$ by (\ref{pisig-eq}). Since $|U_a| \ge 2$, we obtain
      that $|\Symm{\lambda}|$ is divisible by $2$. In particular, $2$
      is not a zero divisor in $R$, which yields that $\gamma = 0$.
    \item
      (iii) is violated. This means that
      there are two identical edges  or loops
      $a_ib_i = a_{j}b_{j}$ such that $a_i$ and $b_i$ have the same
      charge.
      Swapping $x_{i}y_{i}$ and $x_jy_j$, we obtain 
      an element $\epsilon$ that is equal to $-\hat{\gamma}$. However,
      $\epsilon$ is also the element obtained by acting on
      $\hat{\gamma}$ with the group element
      $(x_{i},x_j)(y_{i},y_j)$, because $a_i$ and $b_i$ have the
      same charge. We deduce that  
      \[
      -\gamma = [\epsilon] = [\hat{\gamma}] = \gamma; 
      \]
      the second equality is (\ref{pisig-eq}). Again, we obtain that
      $\gamma = 0$.
    \end{itemize}

    To prove that $\varphi$ defines an isomorphism of modules, 
    it remains to show that $\varphi(\kappa)$ is nonzero for every 
    nonzero generator $\kappa$ of 
    $\mathcal{C}((\Delta_{\lambda,{\sf s}},E_{\lambda,{\sf
    s}});R)$. 

    Saying that $\gamma = x_0y_0 \wedge \cdots \wedge x_dy_d$ is zero
    in $\mathcal{C}(\M[n];R)/(\lambda,{\sf s})$ 
    is equivalent to saying that there is a group element $\pi
    \in \Symm{\lambda}$ such that $\pi^{({\sf s})}(\hat{\gamma}) =
    -\hat{\gamma}$. Namely, suppose that such a group element $\pi$
    exists. Then
    \[
    \gamma = [\hat{\gamma}] = [\pi^{({\sf s})}(\hat{\gamma})]
    = [-\hat{\gamma}] = -\gamma,
    \]
    which implies that $\gamma = 0$ by the assumption on $R$.
    If such an element $\pi$ does not exist, then the coefficient of 
    $\hat{\gamma}$ in $\gamma$ is a divisor of $|\Symm{\lambda}|$,
    which is not a zero divisor in $R$ by assumption.

    Let $\pi$ be such that $\pi^{({\sf s})}(\hat{\gamma}) =
    -\hat{\gamma}$. For each $a \le b \in \{1, \ldots, r\}$, 
    let $\pi_{a,b}$ be the restriction of $\pi$ to the union of the
    sets $\{x_i,y_i\}$ satisfying $x_i \in U_a$ and $y_i \in U_b$.
    For each $a \in \{1, \ldots, r\}$, let $\pi_a$ be the restriction
    of $\pi$ to the set of elements $x \in U_a$ such that 
    $x \notin \{x_i,y_i :  i \in \{0, \ldots, d\}\}$.
    We may decompose $\pi$ as the product of all $\pi_{a,b}$ and
    $\pi_a$, extending each factor to $\{1, \ldots, n\}$ by defining
    it to be the
    identity outside its domain. In particular, either 
    $\pi^{({\sf s})}_{a}(\hat{\gamma}) = -\hat{\gamma}$ for some $a$
    or $\pi^{({\sf s})}_{a,b}(\hat{\gamma}) = -\hat{\gamma}$ for some
    $a \le b$.
    In the former case, $a$ is negatively charged, and 
    there are at least two elements $x$ and $y$ in $U_a$ 
    outside the set $\{x_i,y_i :  i \in \{1, \ldots, r\}\}$. This
    violates (ii), as the degree of $a$ is then at most $\lambda_a-2$. 
    In the latter case, $ab$ anticommutes with itself, meaning that
    $a$ and $b$ have the same charge. This violates (iii).

    It remains to show that the boundary operators coincide.
    Consider the $i$th term 
    $\epsilon_i = (-1)^i\cdot x_0y_0 \wedge \cdots \wedge \hat{x_iy_i}
    \wedge x_dy_d$ in the boundary of $\hat{\gamma}$ in
    $\mathcal{C}(\M[N];R)$. 
    Write $\delta_i = [\epsilon_i]$. If $a_i \neq b_i$, then let
    $(z_1 = x_i, z_2, \ldots, z_q)$ be the sequence of elements
    appearing in $U_{a_i} \cap \{x_j,y_j : i \le j \le d\}$
    and let $(z'_1 = y_i, z'_2, \ldots, z'_{q'})$ be the sequence of
    elements appearing in $U_{b_i} \cap \{x_j,y_j : i \le j \le d\}$,
    arranged in increasing order.
    To obtain the lexicographically smallest element $\hat{\delta_i}$
    from $\epsilon_i$, we need to act on $\epsilon_i$ with the
    permutation $(z_q, \cdots, z_2, z_1)(z'_{q'}, \cdots, z'_2,
    z'_1)$. There are three cases.
    \begin{itemize}
    \item
      $a_i$ and $b_i$ are both negatively charged.
      By construction, $a_ib_i$ appears only once in 
      $\gamma$, meaning that there are $q+q'-2$ edges 
      $a_jb_j$ commuting with $a_ib_i$ such that $j>i$;
      hence $\eta_i = d-i-(q+q'-2)$, where
      $\eta_i$ is the number of indices $j > i$ such that $a_ib_i$ and
      $a_jb_j$ anticommute.
      We conclude that the sign of $\hat{\delta_i}$ equals
      $(-1)^{i+q+q'-2} = (-1)^{d-\eta_i}$, which aligns with
      (\ref{partial-eq}).
    \item
      $a_i$ and $b_i$ are both positively charged.
      This means that no edges commute with $a_ib_i$; 
      hence $\eta_i = d-i$ and the sign of 
      $\hat{\delta_i}$ equals $(-1)^{i} = (-1)^{d-\eta_i}$.
    \item
      $a_i$ and $b_i$ have opposite charges. By symmetry, we may
      assume that $a_i$ is negatively charged and $b_i$ positively
      charged. In that case, there are $q-1$ edges 
      $a_jb_j$ commuting with $a_ib_i$ such that $j>i$;
      hence $\eta_i = d-i-(q-1)$. We conclude that
      the sign of $\hat{\delta_i}$ equals
      $(-1)^{i+q-1} = (-1)^{d-\eta_i}$. 
    \end{itemize}
    If $a_i = b_i$, then $a_i$ is positively charged by construction, 
    and there are no edges or loops commuting with $a_ia_i$;
    hence $\eta_i = d-i$.
    We conclude that the sign of $\hat{\delta_i}$ is 
    $(-1)^i = (-1)^{d -\eta_i}$.
  \end{proof}

  When all signs $s_i$ are positive, we may 
  identify $\mathcal{C}(\M[n];R)/(\lambda,{\sf s})$ with the
  chain complex of a certain 
  simplicial complex. 
  Specifically, define $\BD{r}{\lambda}$ to be
  the family of sets $\sigma$ of edges and loops on the set
  $\{1, \ldots, r\}$ such that $\deg_\sigma(a) \le \lambda_a$ for 
  each $a \in \{1, \ldots, r\}$. It is clear that $\BD{r}{\lambda}$ is
  a simplicial complex.
  \begin{proposition}
    If $s_a = +1$ for all $a \in \{1, \ldots, r\}$, then
    $\mathcal{C}((\Delta_{\lambda,{\sf s}},E_{\lambda,{\sf s}});R)$
    and $\mathcal{C}(\BD{r}{\lambda};R)$ are isomorphic.
    In particular, if $|\Symm{\lambda}| = \prod_{a=1}^r \lambda_a!$ is
    not a zero divisor in $R$, then
    $\mathcal{C}(\M[n];R)/(\lambda,{\sf s})$ and 
    $\mathcal{C}(\BD{r}{\lambda};R)$ are isomorphic.
  \end{proposition}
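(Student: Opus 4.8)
The plan is to verify that when every sign is positive, the combinatorial data $(\Delta_{\lambda,{\sf s}}, E_{\lambda,{\sf s}})$ that underlies $\mathcal{C}((\Delta_{\lambda,{\sf s}},E_{\lambda,{\sf s}});R)$ degenerates to ordinary simplicial data on the complex $\BD{r}{\lambda}$. First I would unwind the definitions: with $s_a=+1$ for all $a$, an edge $ab$ contains no negatively charged element, so $ab$ commutes with another edge $cd$ only when $\{a,b\}\cap\{c,d\}$ contains exactly one negatively charged element, which is impossible. Hence no two edges commute and no loop commutes with anything, so $E_{\lambda,{\sf s}}=\emptyset$. Likewise condition (ii) in the definition of $\Delta_{\lambda,{\sf s}}$ is vacuous (there are no negatively charged elements), condition (iii) reduces to ``no edge or loop appears more than once,'' and condition (i) reduces to $0\le\deg_\sigma(a)\le\lambda_a$. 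Thus a multiset in $\Delta_{\lambda,{\sf s}}$ is automatically an honest set of edges and loops, and $\Delta_{\lambda,{\sf s}}$ is exactly the family $\BD{r}{\lambda}$.

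Next I would compare the chain complexes term by term. Since $E_{\lambda,{\sf s}}=\emptyset$, every pair of elements of $V$ anticommutes, so in formula (\ref{permx-eq}) the exponent $\eta$ is just the total number of inversions of the permuted sequence, i.e. $x_{\pi(0)}\tensor\cdots\tensor x_{\pi(d)}=\sgn(\pi)\cdot x_0\tensor\cdots\tensor x_d$; this is precisely the relation defining a simplicial chain. Similarly, in the boundary formula (\ref{partial-eq}) the quantity $\eta_i$ is the number of indices $j>i$, which is $d-i$, so $(-1)^{d-\eta_i}=(-1)^{i}$, recovering the usual alternating-sum simplicial boundary $\partial(x_0\tensor\cdots\tensor x_d)=\sum_i(-1)^i x_0\tensor\cdots\hat{x_i}\cdots\tensor x_d$. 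Therefore the obvious map sending the generator $x_0\tensor\cdots\tensor x_d$ (for a face $\{x_0,\ldots,x_d\}$ with $x_0<\cdots<x_d$, since equal vertices cannot both appear by (iii)) to the corresponding oriented simplex of $\BD{r}{\lambda}$ is an isomorphism of chain complexes $\mathcal{C}((\Delta_{\lambda,{\sf s}},E_{\lambda,{\sf s}});R)\cong\mathcal{C}(\BD{r}{\lambda};R)$.

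For the final assertion, I would simply invoke Theorem~\ref{isomorphic-thm}: under the hypothesis that $|\Symm{\lambda}|=\prod_a\lambda_a!$ is not a zero divisor in $R$, that theorem gives $\mathcal{C}(\M[n];R)/(\lambda,{\sf s})\cong\mathcal{C}((\Delta_{\lambda,{\sf s}},E_{\lambda,{\sf s}});R)$, and composing with the isomorphism just constructed yields $\mathcal{C}(\M[n];R)/(\lambda,{\sf s})\cong\mathcal{C}(\BD{r}{\lambda};R)$.

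The proof is essentially a bookkeeping exercise, so there is no serious obstacle; the only point requiring a little care is checking that every hypothesis (A)--(C) and (i)--(iii) genuinely collapses in the all-positive case, and in particular that property (C) (equal elements must commute) does not cause trouble — here it is harmless precisely because (iii) forbids repeated edges or loops entirely, so no generator ever has a repeated tensor factor.
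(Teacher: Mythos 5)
Your proposal is correct and follows essentially the same route as the paper's proof: check that conditions (i)--(iii) collapse in the all-positive case so that the chain groups agree with those of $\BD{r}{\lambda}$, observe that since all pairs anticommute the boundary in (\ref{partial-eq}) reduces to the usual simplicial boundary $\sum_i(-1)^i\gamma_i$, and then invoke Theorem~\ref{isomorphic-thm} for the final statement. The extra care you take with $E_{\lambda,{\sf s}}=\emptyset$ and property (C) is just a more detailed writing-out of the same argument.
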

  \begin{proof}
    By assumption, all elements are positively charged, which 
    implies by (i)-(iii) that the individual chain groups are 
    isomorphic for the two chain complexes. Since all pairs of edges 
    anticommute, the boundary operator $\partial$ 
    on $\mathcal{C}((\Delta_{\lambda,{\sf s}},E_{\lambda,{\sf s}});R)$ defined in
    (\ref{partial-eq}) has the property that 
    \[
    \partial(\gamma) = \sum_{i=0}^d (-1)^{i} \cdot \gamma_i,
    \]
    which is the usual simplicial boundary operator. For the last
    statement of the proposition, apply Theorem~\ref{isomorphic-thm}.
  \end{proof}

  It turns out that we can transform
  $\mathcal{C}(\M[n];R)/(\lambda,{\sf s})$ into a smaller chain
  complex with the same homology such that 
  the description of the generators is symmetric with respect to
  charge.
  \begin{theorem}    
    The homology of $\mathcal{C}((\Delta_{\lambda,{\sf
        s}},E_{\lambda,{\sf s}});R)$
    is isomorphic to the homology of the quotient
    complex $\mathcal{C}((\lambda,{\sf s});R)$ 
    obtained by restricting
    to multisets $\sigma$ satisfying the following two conditions.
    \begin{itemize}
    \item[\rm(I)]
      For each element $a$, we have that
      $\lambda_a-1 \le \deg_\sigma(a) \le \lambda_a$, 
      and the loop $aa$ does not appear in $\sigma$.
    \item[\rm(II)]
      An edge $ab$ does not appear more than once in $\sigma$ unless
      $a$ and $b$ have opposite charges.
    \end{itemize}
    In particular, if $|\Symm{\lambda}| = \prod_{a=1}^r \lambda_a!$ is
    not a zero divisor in $R$, then 
    the homology of $\mathcal{C}(\M[n];R)/(\lambda,{\sf s})$
    isomorphic to that of $\mathcal{C}((\lambda,{\sf s});R)$.
    \label{corechaincpx-thm}
  \end{theorem}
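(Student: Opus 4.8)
The plan is to exhibit an explicit acyclic subcomplex (or, equivalently, a discrete-Morse-style matching / algebraic contraction) inside $\mathcal{C}((\Delta_{\lambda,\mathsf{s}},E_{\lambda,\mathsf{s}});R)$ whose quotient is $\mathcal{C}((\lambda,\mathsf{s});R)$, and then invoke the long exact sequence of the pair. The difference between $\Delta_{\lambda,\mathsf{s}}$ and the restricted family is exactly the positively charged vertices: condition (i) allows $0 \le \deg_\sigma(a) \le \lambda_a$ for $a$ positively charged, whereas (I) demands $\lambda_a - 1 \le \deg_\sigma(a) \le \lambda_a$ and forbids the loop $aa$. So the strategy is to process the positively charged elements one at a time, each time quotienting away the ``extra'' multisets in which that element has low degree or carries a loop, and showing the quotient map is a quasi-isomorphism at each stage.

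First I would fix one positively charged element, say $a$, and partition the generators of the current complex according to whether $\deg_\sigma(a) \in \{\lambda_a-1,\lambda_a\}$ with no loop $aa$ (the ``good'' generators, which survive) or not (the ``bad'' generators). The span of the bad generators is a subcomplex $\mathcal{B}$ — one must check this is closed under $\partial$: removing an edge or loop from a bad multiset can only decrease $\deg_\sigma(a)$, so a bad multiset stays bad, using that $aa$ can only be removed (never created) by $\partial$ and that the loop-containing case is handled because property (C)/anticommutativity of loops is already built in. Then I would show $\mathcal{B}$ is acyclic by constructing a chain homotopy: on a multiset $\sigma$ with $\deg_\sigma(a) \le \lambda_a - 2$ and no loop $aa$, adjoining the loop $aa$ (legal by (i), since it raises $\deg$ by two, still $\le \lambda_a$) is a well-defined operation, and the pair (add $aa$ / remove $aa$) gives a perfect matching on the bad generators that respects the incidence structure. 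The sign bookkeeping here is exactly the anticommuting-inversion calculus of Section~\ref{genchain-sec}: since every loop anticommutes with everything, inserting $aa$ as the lexicographically smallest element contributes a controlled sign, and the homotopy identity $\partial h + h \partial = \pm\,\mathrm{id}$ on $\mathcal{B}$ follows from the same cancellation that proved $\partial \circ \partial = 0$.

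With $\mathcal{B}$ acyclic, the long exact sequence of $0 \to \mathcal{B} \to \mathcal{C}(\text{current}) \to \mathcal{C}(\text{current})/\mathcal{B} \to 0$ gives that the quotient has the same homology, and the quotient is precisely the complex in which $a$ now satisfies (I). Iterating over all positively charged $a$ (the operations for distinct elements are independent, since adjoining $aa$ does not affect $\deg_\sigma(a')$ for $a' \ne a$) reduces condition (i) to condition (I) for every vertex while leaving (iii)$=$(II) untouched, producing $\mathcal{C}((\lambda,\mathsf{s});R)$. The final sentence of the theorem is then immediate: when $|\Symm{\lambda}|$ is not a zero divisor in $R$, Theorem~\ref{isomorphic-thm} identifies $\mathcal{C}(\M[n];R)/(\lambda,\mathsf{s})$ with $\mathcal{C}((\Delta_{\lambda,\mathsf{s}},E_{\lambda,\mathsf{s}});R)$, and we have just shown the latter has the homology of $\mathcal{C}((\lambda,\mathsf{s});R)$.

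**Main obstacle.** The routine-looking but genuinely delicate step is verifying the homotopy identity with the correct signs — i.e., that ``insert the loop $aa$'' and ``delete the loop $aa$'' assemble into a contraction of $\mathcal{B}$ rather than merely a vector-space isomorphism in each degree. One has to track how $\eta_i$ in (\ref{partial-eq}) changes when $aa$ is inserted (it always anticommutes, so it behaves like a sign-twisting generator), confirm that the inserted-loop position is forced by the lexicographic order, and check that the terms of $\partial h + h\partial$ other than the identity term cancel in pairs exactly as in the $\partial^2 = 0$ argument. A secondary point to be careful about is the boundary case $\lambda_a = 1$ for a positively charged $a$: then (i) already forces $\deg_\sigma(a) \in \{0,1\}$ with no loop possible, the set of bad generators is those with $\deg_\sigma(a)=0$, and the homotopy adds/removes the loop $aa$ only when $\deg_\sigma(a) = 0$ — one should confirm $aa$ is then admissible (degree becomes $2 > \lambda_a$, so in fact it is \emph{not}, and for $\lambda_a=1$ there are simply no bad generators at all, so the step is vacuous); I would state this case explicitly to avoid a spurious contradiction.
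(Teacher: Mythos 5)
Your proposal is correct and follows essentially the same route as the paper: the paper likewise eliminates the positively charged elements one at a time, shows that the subcomplex of generators violating (I) for the current element $a_q$ is acyclic via the decomposition $c = a_qa_q \otimes c' + c''$ and the identity $c = \partial(a_qa_q \otimes c'')$ — which is exactly your loop-insertion contraction — and then invokes the long exact sequence of the pair, with the final claim coming from Theorem~\ref{isomorphic-thm}. No substantive difference to report.
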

  \begin{proof}
    Let $a_1, \ldots, a_p$ be the positively charged elements. 
    For $0 \le q \le p$, let $\mathcal{C}^{(q)}$ be the quotient
    complex 
    obtained by removing all generators in which $a_i$ does not
    satisfy (I) for some $i \le q$. Note that $\mathcal{C}^{(0)}$
    coincides with $\mathcal{C}((\Delta_{\lambda,{\sf
        s}},E_{\lambda,{\sf s}});R)$
    and that $\mathcal{C}^{(p)}$ coincides with 
    $\mathcal{C}((\lambda,{\sf s});R)$.
    To prove the theorem, it suffices to prove that
    the homology of $\mathcal{C}^{(q-1)}$ is isomorphic to that of 
    $\mathcal{C}^{(q)}$ for $1 \le q \le p$ and that an isomorphism is
    induced by the natural quotient map. Namely, this implies that 
    the natural quotient map from $\mathcal{C}((\Delta_{\lambda,{\sf
        s}},E_{\lambda,{\sf s}});R)$ to $\mathcal{C}((\lambda,{\sf
      s});R)$ induces an isomorphism on homology. 

    Now,
    note that we may view $\mathcal{C}^{(q)}$ as the quotient complex
    of $\mathcal{C}^{(q-1)}$ by the subcomplex
    $\mathcal{W}^{(q)}$ consisting of those generators in which $a_q$
    does not satisfy (I). Given a chain group element $c$ in 
    $\mathcal{W}^{(q)}$, we may decompose $c$ as
    $c = a_qa_q \tensor c' +  c''$, where no generators appearing
    in $c'$ and $c''$ contain the loop $a_qa_q$. This means that the
    degree of $a_q$ is at most $\lambda_{a_q}-2$ in each generator
    appearing in $c'$ or $c''$. 
    Let $\partial$ denote the boundary operator in $\mathcal{W}^{(q)}$
    induced by the one in $\mathcal{C}((\Delta_{\lambda,{\sf
        s}},E_{\lambda,{\sf s}});R)$. 
    Since
    \[
    \partial(c) = c' - a_qa_q \tensor \partial(c') +  \partial(c''),
    \]
    $c$ is a cycle if and only if $\partial(c') = 0$
    and $c' = - \partial(c'')$. However, in this case we have that
    $c$ equals $\partial(a_qa_q \tensor c'')$. This is indeed a
    boundary in $\mathcal{W}^{(i)}$, because the degree of $a_q$ is at
    most $\lambda_{a_q}-2$ in each generator appearing in $c''$.
    As a consequence, the homology of $\mathcal{W}^{(i)}$ is zero. 
    By the long exact sequence for the pair
    $(\mathcal{C}^{(i-1)},\mathcal{W}^{(i)})$, it follows that 
    $\mathcal{C}^{(i-1)}$ and $\mathcal{C}^{(i)}$ have the same
    homology. This concludes the proof.
  \end{proof}

  \section{Connection to the free two-step nilpotent Lie algebra}
  \label{twosteplie-sec}

  Before proceeding, we discuss a closely related Koszul complex
  \cite{JW,Sigg}.
  Let $R$ be a commutative ring.
  Let $\lambda = (\lambda_1, \ldots, \lambda_r)$ and
  \[
  \lambda^- = 
  \left(\begin{array}{ccc} {\lambda_1} &
      {\cdots} & {\lambda_r} \\
      {-} & {\cdots} & {-}
    \end{array}
  \right).
  \]
  Let $X_{d+1}^\lambda$ be the free $R$-module generated by 
  elements of the form $e_0 \wedge \cdots \wedge e_d \tensor v_1
  \wedge \cdots \wedge v_t$, where $v_1, \ldots, v_t \in \{1, \ldots,
  r\}$
  and 
  $e_0, \ldots, e_d$ are edges on $\{1, \ldots, r\}$ (loops are not
  allowed) such that the total number of occurrences of $a$ in $(e_0,
  \ldots, e_d, v_1, \ldots, v_t)$ is $\lambda_a$ for $1 \le a \le r$.
  If $e_i = e_j$ or $v_i = v_j$ for some $i \neq j$, then we define 
  $e_0 \wedge \cdots \wedge e_d \tensor v_1
  \wedge \cdots \wedge v_t$ to be zero.
  For any permutations $\rho$ of $\{0, \ldots, d\}$ and $\tau$ of
  $\{1, \ldots, t\}$, we define 
  \[
  e_{\rho(0)} \wedge \cdots \wedge e_{\rho(d)} \tensor v_{\tau(1)}
  \wedge \cdots \wedge v_{\tau(t)} 
  = 
  \sgn(\rho)\sgn(\tau) \cdot e_0 \wedge \cdots \wedge e_d
  \tensor v_1 \wedge \cdots \wedge v_t.
  \]
  Note that $2d+2+t = |\lambda|$.
  For example, for $\lambda = (2,2,2,1)$, the element 
  $12 \wedge 23 \tensor 1 \wedge 3 \wedge 4$ appears in
  $X_2^\lambda$.

  We define a boundary operator by
  \begin{eqnarray*}
    & & \delta
    (x_0y_0 \wedge \cdots \wedge x_dy_d \tensor v_1 \wedge
    \cdots \wedge v_t)\\
    &=& \sum_{i=0}^d 
    (-1)^{i} \cdot x_0y_0 \wedge \cdots \wedge \hat{x_iy_i} \wedge \cdots
    \wedge x_dy_d \tensor x_i \wedge y_i \wedge v_1 \wedge \cdots
    \wedge
    v_t;
  \end{eqnarray*}
  we assume that $x_j < y_j$ for $0 \le j \le d$.
  This indeed yields a chain complex $\mathcal{X}^\lambda =
  (X_*^\lambda,\delta)$; what we just defined is the chain complex
  associated to the free two-step 
  nilpotent Lie algebra analyzed by J\'ozefiak
  and Weyman \cite{JW} and Sigg \cite{Sigg}.

  \newcommand{\inv}[1]{\mbox{inv}(#1)}

  For a sequence $w = (w_1, \ldots, w_k)$ of elements from $\{1,
  \ldots, r\}$, 
  let $\inv{w}$ be the number of inversions of $w$, i.e., the number
  of pairs $(i,j)$ such that $i<j$ and $w_i>w_j$. 
  For a generator $\gamma = x_0y_0 \wedge \cdots \wedge x_dy_d
  \tensor v_1 \wedge \cdots \wedge v_t$, let 
  \[
  \langle \gamma \rangle 
  = (x_0,y_0, \ldots, x_d, y_d,  v_1, \ldots, v_t).
  \]
  Define 
  $\sgn(\gamma)$ to be $(-1)^{\inv{\langle\gamma\rangle}}$.
  For example, 
  \[
  \sgn(12 \wedge 23 \tensor 1 \wedge 3 \wedge 4) = -1,
  \]
  because $(1,2,2,3,1,3,4)$ contains $3$ inversions: $(2,5), (3,5),
  (4,5)$.
  
  Define a map $\varphi$ from $X_{d+1}^\lambda$ to 
  $\tilde{C}_d(\lambda^-;R)$ by
  \[
  \varphi(\gamma) = \varphi(e_0 \wedge \cdots \wedge e_d \tensor v_1
  \wedge \cdots \wedge v_t) = 
  \sgn(\gamma) \cdot e_0 \tensor \cdots \tensor e_d.
  \]
  We need to show that this map is well-defined. For this, it suffices
  to show that $\varphi(\gamma') = - \varphi(\gamma)$ for any
  $\gamma'$ obtained from $\gamma$ by swapping either $e_{i-1}$ and
  $e_i$ or $v_{i-1}$ and $v_i$ for some $i$.
  \begin{itemize}
  \item
    We obtain $\gamma'$ by swapping $e_{i-1}$ and $e_i$. 
    \begin{itemize}
    \item
      If the two edges are disjoint, then the number of inversions changes
      by an even number, because the inversion status changes for exactly
      four pairs of indices. In particular, $\sgn(\gamma') =
      \sgn(\gamma)$. Since $e_{i-1}$ and $e_i$ anticommute, we obtain
      that $\varphi(\gamma') = -\varphi(\gamma)$. 
    \item
      If the two edges have one element in common, then the number of
      inversions changes by an odd number, because there are three pairs
      of indices for which the inversion status changes. 
      This means that $\sgn(\gamma') = -\sgn(\gamma)$. 
      Since $e_{i-1}$ and $e_i$ commute, we again deduce that
      $\varphi(\gamma') = -\varphi(\gamma)$.
    \end{itemize}
  \item
    We obtain $\gamma'$ by swapping $v_{i-1}$ and $v_i$. 
    Since the number of inversions either increases or decreases by one,
    we obtain that $\sgn(\gamma') = -\sgn(\gamma)$, which implies that
    $\varphi(\gamma') = -\varphi(\gamma)$.
  \end{itemize}

  \begin{theorem}
    The map $\varphi$ defines an isomorphism between
    $\mathcal{X}^\lambda$ and
    $\mathcal{C}(\lambda^-;R)$.
  \end{theorem}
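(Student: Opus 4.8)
The plan is to show that the map $\varphi$ already defined from $X_{d+1}^\lambda$ to $\tilde{C}_d(\lambda^-;R)$ is a bijection on the natural sets of generators and intertwines the two boundary operators. Since well-definedness has been verified above, the first task is to check that $\varphi$ carries generators to generators bijectively. A generator $\gamma = e_0 \wedge \cdots \wedge e_d \tensor v_1 \wedge \cdots \wedge v_t$ of $X_{d+1}^\lambda$ has the property that every element $a \in \{1,\ldots,r\}$ occurs a total of exactly $\lambda_a$ times in $(e_0,\ldots,e_d,v_1,\ldots,v_t)$; discarding the $v_i$, this says precisely that the multiset $\{e_0,\ldots,e_d\}$ has $\deg(a) \in \{\lambda_a-1,\lambda_a\}$ for each $a$ (the deficiency being recorded by how often $a$ appears among the $v_i$), which is exactly condition (I) for $\mathcal{C}((\lambda^-);R)$; here one also notes that all elements are negatively charged, so ``$ab$ appears more than once'' forces $a,b$ of the same charge and is forbidden, which is condition (II), matching the requirement $e_i \ne e_j$ in the definition of $X_{d+1}^\lambda$. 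Conversely, given such a multiset of edges one recovers the $v_i$ uniquely (as a set, since repeats are disallowed and the ordering is immaterial up to sign), so $\varphi$ is a bijection on generators up to sign, and the sign factor $\sgn(\gamma)$ makes it a genuine isomorphism of $R$-modules once one fixes, say, the increasing arrangement of each edge $e_i = x_iy_i$ with $x_i < y_i$ and of the tuple $(v_1,\ldots,v_t)$.

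Next I would verify that $\varphi \circ \delta = \partial \circ \varphi$ on generators. Fix $\gamma = x_0y_0 \wedge \cdots \wedge x_dy_d \tensor v_1 \wedge \cdots \wedge v_t$ with $x_j < y_j$ and with $\{x_0y_0,\ldots,x_dy_d\}$ arranged lexicographically increasing. Applying $\delta$ first gives $\sum_i (-1)^i\, x_0y_0 \wedge \cdots \wedge \widehat{x_iy_i} \wedge \cdots \wedge x_dy_d \tensor x_i \wedge y_i \wedge v_1 \wedge \cdots \wedge v_t$; applying $\varphi$ to the $i$th term produces $\sgn(\delta_i\gamma)$ times $x_0y_0 \tensor \cdots \tensor \widehat{x_iy_i} \tensor \cdots \tensor x_dy_d$. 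On the other side, $\partial(\varphi(\gamma)) = \sgn(\gamma) \sum_i (-1)^{d-\eta_i}\, x_0y_0 \tensor \cdots \tensor \widehat{x_iy_i} \tensor \cdots \tensor x_dy_d$, where, since all elements are negatively charged, $\eta_i$ counts the indices $j>i$ with $x_iy_i$ and $x_jy_j$ \emph{anticommuting}, i.e.\ sharing zero or two endpoints — by (II) they cannot share two, so $\eta_i = (d-i) - (\text{number of } j>i \text{ with } x_iy_i,\,x_jy_j \text{ sharing exactly one endpoint})$. So the claim reduces to the sign identity
\[
\sgn(\gamma) \cdot (-1)^{d-\eta_i} = \sgn\bigl(\delta_i\gamma\bigr) \cdot (-1)^i
\]
for each $i$, i.e.\ $\sgn(\delta_i\gamma)/\sgn(\gamma) = (-1)^{i} \cdot (-1)^{d-\eta_i}$. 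This is a counting of inversions: removing $x_iy_i$ from position $i$ of the edge-wedge and re-inserting $x_i, y_i$ at the front of the vector-part changes the inversion number of the flattened sequence $\langle\cdot\rangle$ by a controlled amount, and I expect it to work out by tracking (a) the $2i$ transpositions needed to move the pair $(x_i,y_i)$ past the $i$ edges preceding it, (b) the inversions created between $(x_i,y_i)$ and the endpoints of the edges following it, which is exactly governed by whether those edges share an endpoint with $x_iy_i$ — this is where the number $\eta_i$ enters — and (c) the inversions between $(x_i,y_i)$ and the $v_j$, which is absorbed because $x_i,y_i$ get prepended to the $v$-list.

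The main obstacle will be part (b) of this sign bookkeeping: when $x_iy_i$ is deleted and $x_i, y_i$ are moved to the front, one must carefully compare, edge by edge among $\{x_jy_j : j > i\}$, how the inversion count of the flattened tuple $\langle\gamma\rangle$ versus $\langle\delta_i\gamma\rangle$ changes, and show that an edge $x_jy_j$ disjoint from $\{x_i,y_i\}$ contributes an even change while one sharing a single endpoint contributes an odd change — which is precisely the commute/anticommute dichotomy already built into the definition of $\partial$ via $\eta_i$. It is essentially the same parity computation already carried out above to prove $\varphi$ is well-defined (where swapping adjacent edges was analyzed), so I would phrase it as an extension of that argument rather than redo it from scratch: well-definedness handles adjacent transpositions of the $e$'s and $v$'s, and since deletion-and-reinsertion of an edge is a composite of such moves together with an index shift, the needed identity follows by composing the already-established adjacent-swap signs with the standard $(-1)^i$ from moving to the front. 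Once the sign identity holds for every $i$, summing over $i$ gives $\varphi\circ\delta = \partial\circ\varphi$, and combined with the module isomorphism established in the first step this proves $\varphi$ is an isomorphism of chain complexes.
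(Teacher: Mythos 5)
Your proposal takes essentially the same route as the paper: an explicit bijection of generators via the degree conditions, followed by reduction of $\varphi\circ\delta=\partial\circ\varphi$ to the sign identity $\sgn(\gamma_i)=(-1)^{i}\cdot(-1)^{d-\eta_i}\cdot\sgn(\gamma)$, which is then verified by tracking how the inversion number of the flattened sequence changes, with the disjoint-versus-shared-endpoint parity dichotomy doing the work (the paper counts the $4(d-i)$ adjacent transpositions directly, subtracting one unchanged step for each later edge meeting $e_i$). Your bookkeeping item (a) misstates the motion (the pair $(x_i,y_i)$ moves past the edges following it, $4(d-i)$ adjacent steps, not past the $i$ preceding ones), but this is harmless to the parity, and your plan of composing the already-verified adjacent-swap signs with the trivial case of deleting the last edge does yield the same identity.
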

  \begin{proof}
    It is straightforward to check that we have an
    isomorphism between the individual chain groups. 
    Consider a generator 
    \[
    \gamma = e_0 \wedge \cdots \wedge e_d \tensor v_1 \wedge \cdots
    \wedge v_t, 
    \]
    and define
    \[
    \gamma_i = e_0 \wedge \cdots \wedge \hat{e_i} \wedge \cdots
    \wedge e_d \tensor x_i \wedge y_i \wedge v_1 \wedge \cdots
    \wedge v_t,    
    \]
    where $e_i = x_iy_i$.
    The coefficient of $\gamma_i$ in the boundary of $\gamma$ in
    $\mathcal{X}^\lambda$ equals $(-1)^i$,
    which implies that the coefficient of 
    $e_0 \tensor \cdots \tensor \hat{e_i} \tensor \cdots \tensor e_d$
    in  $\varphi\circ \delta(\gamma)$ equals 
    $\sgn(\gamma_i) \cdot (-1)^i$.
    Moreover, the coefficient of the same generator in 
    $\partial\circ \varphi(\gamma)$ equals $(-1)^{d-\eta_i}\cdot
    \sgn(\gamma)$,
    where $\eta_i$ is the number of edges among $e_{i+1},
    \ldots, e_d$ that are disjoint from $e_i$.
    For these two coefficients to be the same, we need that
    \begin{equation}
      \sgn(\gamma_i)= (-1)^{d-\eta_i} \cdot (-1)^i \cdot \sgn(\gamma).
      \label{sgngamma-eq}
    \end{equation}
    Now, we may get from $\langle \gamma \rangle$ to 
    $\langle \gamma_i \rangle$ by applying $4(d-i)$ transpositions
    of adjacent elements; we first move $y_i$ to its new position via
    $2(d-i)$  transpositions and then move $x_i$ via the same
    number of transpositions.
    The number of inversions increases or decreases by one whenever we
    transpose two distinct elements and stays the same
    otherwise. Since all edges are distinct, the total number 
    of times the number of inversions changes is equal to $4(d-i)$
    minus the number of edges among $e_{i+1}, \ldots, e_d$ that are
    not disjoint from $e_i$. This equals
    \[
    4(d-i)-((d-i)-\eta_i) = 3d-3i + \eta_i \equiv i + d - \eta_i
    \pmod{2}, 
    \]
    which implies (\ref{sgngamma-eq}).
  \end{proof}
  For $R = \Q$, Dong and Wachs \cite{DongWachs} showed
  that the homology of
  $\mathcal{X}^\lambda$ is isomorphic to that of
  $\mathcal{C}(\lambda^+;\Q)$, where 
  $\lambda^+$ is obtained from $\lambda^-$ by replacing all minus
  signs with plus signs. As it turns out, this property
  does not remain true if we replace $\Q$ with $\Z$.  
  Namely, in general, the homology of
  $\mathcal{C}(\lambda^+;\Z)$ is not 
  isomorphic to the homology of
  $\mathcal{C}(\lambda^-;\Z)$. 
  The smallest example is given by $\lambda = (1,2,2,2)$, in which
  case we obtain elements of order $3$ for the natural signed action
  but not for the natural unsigned action. Computational evidence
  suggests that the torsion subgroup of the homology of
  $\mathcal{C}(\lambda^+;\Z)$ 
  tends to be smaller than the torsion subgroup of the homology of
  $\mathcal{C}(\lambda^-;\Z)$, but we have not been able to make this
  observation more precise, let alone prove anything in this
  direction. 
  
  All results listed in
  Tables~\ref{bdtwist5even-tab}-\ref{bdtwist7even-tab} can be
  interpreted as results about the homology of $\mathcal{X}^\lambda$
  for various $\lambda$; all signs $s_i$ are $-1$. 
  For example, the homology of each of
  $\mathcal{X}^{(3,3,3,3,3,3)}$, $\mathcal{X}^{(3,3,3,3,3,3,3)}$, 
  $\mathcal{X}^{(3,3,3,3,3,3,3,3)}$, and 
  $\mathcal{X}^{(4,4,4,4,4,4,4)}$ contains elements of order $5$. 
  Keep in mind that all degrees should be shifted one step up.

  \section{Actions induced by direct products of wreath products}
  \label{wreath-sec}

  In some situations where the complex $\mathcal{C}((\lambda,{\sf
  s});R)$ remains too large to admit a direct analysis, 
  we will consider a group action on $\mathcal{C}((\lambda,{\sf
  s});R)$. 
  Specifically, let $T = (T_1, \ldots, T_q)$ be a partition of $\{1,
  \ldots, r\}$
  such that $(\lambda_a,s_a) = (\lambda_b,s_b)$ for all  
  $a$ and $b$ in the same set $T_k$. The 
  Young group $\Symm{T} = \Symm{T_1} \times \cdots \times
  \Symm{T_q}$ acts on $\mathcal{C}((\lambda,{\sf s});R)$
  by permuting the elements in each $T_k$. 
  In this paper, we will only
  consider the natural unsigned action. 

  Assume that $|\Symm{\lambda}|$ and $|\Symm{T}|$ are not zero
  divisors in $R$.
  Note that we obtain the chain complex
  $\mathcal{C}((\lambda,{\sf s});R)/\Symm{T}$ from
  $\mathcal{C}(\M[n];R)$ in three steps. 
  \begin{itemize}
  \item[1.]
    Let $\Symm{\lambda}$ act on $\mathcal{C}(\M[n];R)$ to form 
    $\mathcal{C}(\M[n];R)/(\lambda,{\sf s})$. 
  \item[2.]
    Transform
    $\mathcal{C}(\M[n];R)/(\lambda,{\sf s}) \cong 
    \mathcal{C}((\Delta_{\lambda,{\sf s}},E_{\lambda,{\sf s}});R)$ 
    into $\mathcal{C}((\lambda,{\sf s});R)$ via the natural quotient
    map. 
  \item[3.]
    Let $\Symm{T}$ act on
    $\mathcal{C}((\lambda,{\sf s});R)$ to form 
    $\mathcal{C}((\lambda,{\sf s});R)/\Symm{T}$.
  \end{itemize}
  One may interchange steps 2 and 3, thus first letting the group
  $\Symm{T}$ act on $\mathcal{C}(\M[n];R)/(\lambda,{\sf s})$ 
  and then forming a quotient complex satisfying properties I-II in
  Theorem~\ref{corechaincpx-thm}. 
  In this manner, we may view 
  $(\mathcal{C}((\lambda,{\sf s});R)/\Symm{T}$ as a quotient
  complex of 
  $\tilde{C}_d(\M[n];\Z)/G(\lambda,{\sf s},T)$, 
  where $G(\lambda,{\sf s},T)$ is a direct product of certain wreath
  products defined in terms of $\Symm{\lambda}$ and $\Symm{T}$. 

  Specifically, define $\mu_k = |T_k|$. For simplicity, assume that
  $k \in T_k$ for $1 \le k \le q$ and that $\mu_k = 1$ if and only if
  $k$ is greater than a given value $p$. Then $G(\lambda,{\sf s},T)$
  is isomorphic to 
  \[
  (\Symm{\lambda_1} \wr \Symm{\mu_1}) \times
  \cdots \times (\Symm{\lambda_p} \wr \Symm{\mu_p}) \times
  \Symm{\lambda_{p+1}} \times \cdots \times \Symm{\lambda_q},
  \]
  where $\Symm{\lambda_k} \wr \Symm{\mu_k}$ denotes the wreath product
  of $\Symm{\lambda_k}$ by $\Symm{\mu_k}$. We represent the action as
  \begin{equation}
  \left\{
    \mu_1 \times 
    \begin{array}{c}
      \qw{\lambda_1}\\ 
      \qw{s_1}
    \end{array}\right\}
  \join
  \left\{
    \mu_2 \times 
    \begin{array}{c}
      \qw{\lambda_2}\\ 
      \qw{s_2}
    \end{array}\right\}
  \join
  \cdots 
  \join
  \left\{
    \mu_p \times 
    \begin{array}{c}
      \qw{\lambda_p}\\ 
      \qw{s_p}
    \end{array}\right\}
  \join
    \left(\begin{array}{cccc} {\lambda_{p+1}} & {\lambda_{p+2}} &
        {\cdots} & {\lambda_q}
        \\ 
        {s_{p+1}} & {s_{p+2}} & {\cdots} & {s_q}
      \end{array}\right).
    \label{mulambdasnot-eq}
  \end{equation}

  Let $\mathcal{W}$ be the subcomplex of 
  $\mathcal{C}(\M[n];R)/(\lambda,{\sf s})$ with the
  property 
  that the complex $\mathcal{C}((\lambda,{\sf s});R)$ is the quotient
  of $\mathcal{C}(\M[n];R)/(\lambda,{\sf s})$ by $\mathcal{W}$. 
  Since $\mathcal{W}$ is acyclic by the proof of
  Theorem~\ref{corechaincpx-thm}, the homology of
  $\mathcal{W}/\Symm{T}$ has exponent dividing $|\Symm{T}|$ and is
  hence finite; apply Proposition~\ref{qpreserved-prop}.
  By the exact sequence
  \[
  \begin{CD}
    \tilde{H}_i(\mathcal{W}/\Symm{T})
    @>>>
    \tilde{H}_i((\mathcal{C}(\M[n];R)/(\lambda,{\sf s}))/\Symm{T}) \\
    @>>>
    \tilde{H}_i(\mathcal{C}((\lambda,{\sf s});R)/\Symm{T})
    @>>>
    \tilde{H}_{i-1}(\mathcal{W}/\Symm{T}),
  \end{CD}
  \]
  we may hence deduce that 
  $\tilde{H}_i((\mathcal{C}(\M[n];R)/(\lambda,{\sf s}))/\Symm{T})$
  and 
  $\tilde{H}_i(\mathcal{C}((\lambda,{\sf s});R)/\Symm{T})$ 
  nearly coincide, the exception being that the Sylow $p$-subgroups
  of their torsion parts
  may differ when $p$ divides $|\Symm{T}|$.

  If some block $T_j$ of size at least two consists of
  positively charged elements, then the homology of
  $\mathcal{W}/\Symm{T}$ is indeed not necessarily zero. For example,
  if
  $\lambda = \left(\begin{array}{ccc} \qw{3} &
      \qw{3} & \qw{3} \\
      \qw{+} & \qw{+} & \qw{+} 
    \end{array}\right)$ and $\Symm{T} = \Symm{3}$, then
  the homology in degree two is a group of order two.

  However, if each block $T_j$ of size at least two consists of
  negatively 
  charged elements, then the homology groups do coincide;
  it is not hard to adapt the proof of Theorem~\ref{corechaincpx-thm}
  to prove that $\mathcal{W}/\Symm{T}$ has vanishing homology.
  All examples considered in this paper are of this type.

  The problem of finding a combinatorial description of the generators
  of $\mathcal{C}((\lambda,{\sf s});R)/\Symm{T}$
  appears to be immensely hard in general; we are not aware of any
  simple characterization similar to the one in 
  Theorem~\ref{corechaincpx-thm}. 

  \section{Detecting elements of order $5$ in the homology of $\M[14]$}
  \label{m14-sec}

  We present a computer-free proof that $\tilde{H}_4(\M[14];\Z)$
  contains elements of order $5$.  The proof consists of two steps. In
  the first step we consider the action on $\M[14]$ induced by
  $\left(\begin{array}{ccccc} \qw{2} & \qw{3} & \qw{3} & \qw{3} &  
      \qw{3}
      \\[-1ex] \qw{+} & \qw{-} & \qw{-} & \qw{-} & \qw{-}
    \end{array}\right)$.
  In the second step we proceed with the unsigned action induced by
  the natural action by $\Symm{4}$ on the four parts of size three,
  thus analyzing the action induced by
          $
          \left(
            \begin{array}{c}
              \qw{2}\\ 
              \qw{+}
            \end{array}\right)
          \join
          \left\{ 4 \times \begin{array}{c}
                \qw{3}\\ 
              \qw{-} 
            \end{array}\right\}
          $.
  The resulting chain complex consists of two free groups of
  rank four, one in degree four and one in degree five. An explicit
  calculation of the boundary map yields a matrix with determinant 
  $\pm 5$.

  Unfortunately, the proof does not shed much light on {\em why} we
  end up with the value $5$. In particular, we do not know how to
  generalize the proof to deduce the existence of elements of
  order $(2r-1)$
  in the homology of $\M[(r+1)^2-2]$ for general $r$. We expect that
  this can be done by analyzing the action induced by  
  $\left(
  \begin{array}{c}
    \qw{r-1}\\ 
    \qw{+}
  \end{array}\right)
  \join
  \left\{ (r+1) \times \begin{array}{c}
    \qw{r}\\ 
    \qw{-} 
  \end{array}\right\}$; recall the discussion after
  Theorem~\ref{7tor-thm} in Section~\ref{intro-sec} and in
  Section~\ref{big-sec}.

  Thus pick the action on $\M[14]$ induced by 
  $\left(\begin{array}{c} \lambda
      \\[-1ex] {\sf s} \end{array}\right) = 
  \left(\begin{array}{ccccc} \qw{2} & \qw{3} & \qw{3} & \qw{3} & 
      \qw{3}
      \\[-1ex] \qw{+} & \qw{-} & \qw{-} & \qw{-} & \qw{-}
    \end{array}\right)$. By Theorem~\ref{corechaincpx-thm}, the
  homology of the resulting chain complex
  $\mathcal{C}(\M[14];\Z)/(\lambda,{\sf s})$ is isomorphic to the homology
  of the chain complex $\mathcal{C}((\lambda,{\sf s});\Z)$.
  In this complex, we have one generator for each  
  loopless multigraph on the vertex set $\{o,1,2,3,4\}$ such that the 
  degree of $o$ is $1$ or $2$, the degrees of the other vertices are $2$ 
  or $3$, and there are no multiple edges between vertices in
  $\{1,2,3,4\}$; we identify each multigraph wih its multiset of edges.
  A careful examination yields that we have one generator for each 
  of the following multigraphs, where $\{a,b,c,d\} = \{1,2,3,4\}$.

  \begin{tabular}{ccccccc}
    & 
    \multirow{4}{2.5cm}{\epsfig{file=A4.eps}} 
    & 
    & 
    &
    \multirow{4}{2.5cm}{\epsfig{file=B4.eps}}
    \\
    $A^4_{a,b,c,d} =$
    & 
    & 
    & 
    $B^4_{a,b,c,d} =$
    &
    \\
    \\
    \\
  \end{tabular}

  \begin{tabular}{ccccccc}
    & 
    \multirow{4}{2.5cm}{\epsfig{file=C4.eps}} 
    & 
    & 
    &
    \multirow{4}{2.5cm}{\epsfig{file=D4.eps}}
    \\
    $C^4_{a,b,c,d} =$
    & 
    & 
    & 
    $D^4_{a,b,c,d} =$
    &
    \\
    \\
    \\
  \end{tabular}

  \begin{tabular}{ccccccc}
    & 
    \multirow{4}{2.5cm}{\epsfig{file=F5.eps}} 
    & 
    & 
    &
    \multirow{4}{2.5cm}{\epsfig{file=G5.eps}}
    \\
    $A^5_{a,b,c,d} =$
    & 
    & 
    & 
    $B^5_{a,b,c,d} =$
    &
    \\
    \\
    \\
  \end{tabular}

  \begin{tabular}{ccccccc}
    & 
    \multirow{4}{2.5cm}{\epsfig{file=E5.eps}} 
    & 
    & 
    &
    \multirow{4}{2.5cm}{\epsfig{file=H5.eps}}
    \\
    $C^5_{a,b,c,d} =$
    & 
    & 
    & 
    $D^5_{a,b,c,d} =$
    &
    \\
    \\
    \\
  \end{tabular}

  \begin{tabular}{ccccccc}
    & 
    \multirow{4}{2.5cm}{\epsfig{file=I5.eps}} 
    & 
    & 
    &
    \multirow{4}{2.5cm}{\epsfig{file=A6.eps}}
    \\
    $E^5_{a,b,c,d} =$
    & 
    & 
    & 
    $A^6_{a,b,c,d} =$
    &
    \\
    \\
    \\
  \end{tabular}

  Arrange the set of edges on $\{o,1,2,3,4\}$
  lexicographically according to the order $o < 1 < 2 < 3 < 4$;
  \[
  o1 < o2 < o3 < o4 < 12 < 13 < 14 < 23 < 24 < 34.
  \]
  Consider the unsigned action on $\mathcal{C}((\lambda,{\sf s});\Z)$ by
  $\Symm{4}$ induced by the natural action on the set
  $\{1,2,3,4\}$. In the resulting chain complex $\mathcal{C}'$, we
  have one generator for each of 
  the above isomorphism classes of multigraphs. 

  Yet, note that 
  $E^5_{1,2,3,4} = E^5_{2,1,3,4} = \{o1,o2,13,14,23,24\}$. 
  Writing $\gamma = o1 \tensor o2 \tensor 13 \tensor 14 \tensor 23
  \tensor 24$, we have that
  \[
  (1,2)\circ \gamma
  = o2 \tensor o1 \tensor 23 \tensor 24 \tensor 13 \tensor 14
  = - o1 \tensor o2 \tensor 13 \tensor 14 \tensor 23 \tensor 24
  = - \gamma;
  \]
  this is because the sequence $(o2, o1, 23, 24, 13, 14)$
  contains three anticommuting inversions 
  $(1,2)$, $(3,6)$, and $(4,5)$. In particular, 
  the generator corresponding to the isomorphism class of
  $E^5_{1,2,3,4}$ is zero.
  For similar reasons, the generator corresponding to
  the isomorphism class of $A^6_{1,2,3,4}$ is also zero. 

  The remaining eight generators are nonzero though. For example, 
  the identity and $(1,2)(3,4)$ are the two group elements in 
  $\Symm{4}$ that leave $C^4_{1,2,3,4}$ fixed. Since
  \[
  (1,2)(3,4) \circ (o1 \tensor o2 \tensor 13 \tensor 24 \tensor 34)
  = o2 \tensor o1 \tensor 24 \tensor 13 \tensor 34,
  \]
  and since the sequence $(o2,o1,24,13,34)$ contains two anticommuting
  inversions $(1,2)$ and $(3,4)$, we obtain the desired
  claim. The seven other generators are treated similarly.

  For any multigraph $G$ on the vertex set $\{o,1,2,3,4\}$, fix the
  orientation given by the lexicographic order defined earlier.
  For example, we identify $B^4_{3,1,4,2} = \{o3,13,34,12,24\}$
  with the oriented simplex $o3 \tensor 12 \tensor 13 \tensor 24
  \tensor 34$. 
  Identify each generator $G$ with its isomorphism class.
  Moreover, write $A^4 = A^4_{1,2,3,4}$ and so on. 
  We obtain the following.
  \begin{eqnarray*}
    \partial(A^5) &=& A^4_{1,2,3,4} + A^4_{1,3,2,4} - B^4_{1,2,3,4}
    = 2A^4 - B^4; \\
    \partial(B^5) &=& -A^4_{1,3,2,4} - C^4_{1,2,3,4}
    = -A^4 - C^4; \\
    \partial(C^5) &=& -B^4_{1,2,3,4} + B^4_{2,1,4,3} + C^4_{1,2,3,4}
    = -2B^4 + C^4; \\
    \partial(D^5) &=&  A^4_{1,2,3,4} + A^4_{1,2,3,4} + D^4_{1,2,3,4}
    =  2A^4 + D^4. \\
  \end{eqnarray*}
  In matrix form, we get
  \[
  \partial
  \left( 
    \begin{array}{c}
      A^5 \\ B^5 \\ C^5 \\ D^5
    \end{array}
  \right)
  =
  \left( 
    \begin{array}{rrrr}
       2 & -1 &  0 & 0 \\
      -1 &  0 & -1 & 0 \\
       0 & -2 &  1 & 0 \\
       2 &  0 &  0 & 1 \\
    \end{array}
  \right)
  \left( 
    \begin{array}{c}
      A^4 \\ B^4 \\ C^4 \\ D^4
    \end{array}
  \right)
  \]
  Since the determinant is $-5$, we conclude that 
  $\tilde{H}_4(\mathcal{C}') \cong \Z_5$. As a consequence, since 
  the order of $\Symm{4}$ is not divisible by five, 
  $\tilde{H}_4((\lambda,{\sf s});\Z)$ contains elements of order five,
  as does $\tilde{H}_4(\M[14];\Z)$ for similar reasons.

  \section{The case of an abelian $2$-group}
  \label{abeliantwo-sec}

  Let us discuss the special case that all values in the sequence
  $\lambda$ are at most two. Before considering the matching complex,
  we look at a more general situation.
  Let $R$ be a commutative ring such that $2$ is a unit. Let 
  \[
  \begin{CD}
    \mathcal{C} : \cdots @>\partial>> C_{d+1} @>\partial>> C_d
    @>\partial>> C_{d-1} @>\partial>> \cdots 
  \end{CD}
  \]
  be a chain complex of $R$-modules. Write $C = \bigoplus_d
  C_d$. 
  Suppose that $\tau$ is an involution on $\mathcal{C}$; 
  $\tau$ generates a group of size $2$ acting on $\mathcal{C}$.
  Consider the subgroups $C_d^+$ and $C_d^-$ of $C_d$ induced by the
  unsigned and signed actions on $\mathcal{C}$, respectively;
  \begin{eqnarray*}
    C_d^+ &=& \{c+\tau(c) : c \in C_d\}; \\
    C_d^- &=& \{c-\tau(c) : c \in C_d\}.
  \end{eqnarray*}
  We obtain two chain complexes $\mathcal{C}^+$ and $\mathcal{C}^-$.
  Write $C^+ = \bigoplus C^+_d$ and $C^- = \bigoplus C^-_d$.
  \begin{proposition}
    With notation as above, we have that 
    \[
    \mathcal{C} = \mathcal{C}^+ \oplus \mathcal{C}^-.
    \]
    In particular, 
    \[
    H_d(\mathcal{C}) \cong H_d(\mathcal{C}^+) \oplus H_d(\mathcal{C}^-).
    \]
    \label{formula-prop}
  \end{proposition}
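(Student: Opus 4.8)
The plan is to exhibit an explicit direct-sum decomposition of the chain complex $\mathcal{C}$ as the internal direct sum of the two subcomplexes $\mathcal{C}^+$ and $\mathcal{C}^-$, degree by degree, and then invoke the fact that homology commutes with finite direct sums of chain complexes. The key observation is that $2$ is a unit in $R$, so the two idempotents $e^+ = \tfrac{1}{2}(\id + \tau)$ and $e^- = \tfrac{1}{2}(\id - \tau)$ are well-defined $R$-linear endomorphisms of each $C_d$, they satisfy $e^+ + e^- = \id$, $e^+ e^- = e^- e^+ = 0$, and $(e^\pm)^2 = e^\pm$. Hence for every $c \in C_d$ we have $c = e^+(c) + e^-(c)$, and I will check that $e^+(C_d) = C_d^+$ and $e^-(C_d) = C_d^-$: indeed $e^+(c) = \tfrac{1}{2}(c + \tau(c))$ is visibly of the form $\tfrac{1}{2}(c' + \tau(c'))$ with $c' = c$ up to the harmless factor $\tfrac12$, and conversely any $c + \tau(c)$ equals $e^+(2c)$; similarly for $e^-$. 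This gives $C_d = C_d^+ + C_d^-$.

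The second step is to check that the sum is direct, i.e.\ $C_d^+ \cap C_d^- = 0$. If $x$ lies in both, then $\tau(x) = x$ (since $x = c+\tau(c)$ gives $\tau(x) = \tau(c) + c = x$) and simultaneously $\tau(x) = -x$ (since $x = c - \tau(c)$ gives $\tau(x) = \tau(c) - c = -x$); hence $2x = 0$, and as $2$ is a unit, $x = 0$. Equivalently, one may simply note that on $C_d^+$ the operator $e^+$ is the identity while $e^-$ vanishes, and vice versa on $C_d^-$, so a common element is killed by both $e^+$ and $e^-$ and therefore by $e^+ + e^- = \id$. Thus $C_d = C_d^+ \oplus C_d^-$ as $R$-modules.

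The third step is compatibility with the differential. Since $\tau$ is a chain map, $\partial$ commutes with $\tau$, hence with $e^+$ and $e^-$; therefore $\partial(C_d^+) \subseteq C_{d-1}^+$ and $\partial(C_d^-) \subseteq C_{d-1}^-$, so $\mathcal{C}^+$ and $\mathcal{C}^-$ are genuine subcomplexes and $\mathcal{C} = \mathcal{C}^+ \oplus \mathcal{C}^-$ as chain complexes. The final step is the standard fact that $H_d(\mathcal{C}^+ \oplus \mathcal{C}^-) \cong H_d(\mathcal{C}^+) \oplus H_d(\mathcal{C}^-)$, which follows immediately because cycles and boundaries of a direct sum of complexes are the direct sums of the respective cycles and boundaries. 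I expect no real obstacle here; the only point that requires care is making sure the factor $\tfrac12$ is handled correctly when identifying $e^\pm(C_d)$ with $C_d^\pm$ as defined in the statement (via the sets $\{c \pm \tau(c)\}$ rather than via the idempotents), but this is immediate once one observes both descriptions yield the same $R$-submodule because $2$ is invertible.
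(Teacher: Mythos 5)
Your proof is correct and follows essentially the same route as the paper's: decomposing the identity into the orthogonal idempotents $\frac{1}{2}(\id+\tau)$ and $\frac{1}{2}(\id-\tau)$, whose images are $\mathcal{C}^+$ and $\mathcal{C}^-$ since $2$ is a unit. You simply spell out in more detail the directness of the sum, the compatibility with $\partial$, and the passage to homology, all of which the paper leaves implicit.
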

  \begin{proof}
    \newcommand{\Id}{\mbox{Id}}
    Note that we may decompose the identity as a sum of two orthogonal
    idempotent endomorphisms; we have that 
    \[
    \Id = \frac{1}{2}(\Id + \tau) + \frac{1}{2}(\Id - \tau). 
    \]
    Since  
    $\frac{1}{2}(\Id + \tau)(\mathcal{C}) = \mathcal{C}^+$
    and 
    $\frac{1}{2}(\Id - \tau)(\mathcal{C}) = \mathcal{C}^-$,
    we are done.
  \end{proof}
    
  More generally, assume that we have $r$ pairwise commuting
  involutions $\tau_1, \ldots, \tau_r$ and hence an elementary abelian
  $2$-group of order $2^r$. 
  For any given sequence of signs ${\sf s} = (s_1,
  \ldots, s_r) \in \{+1,-1\}^r$, define 
  \[
  C_d^{s} = 
  C_d^{s_1, \ldots, s_r} = \{ \prod_{i=1}^r (\id + s_i \tau_i)\circ c : 
  c \in C_d\}.
  \]
  Similarly to the proof of Proposition~\ref{formula-prop}, we may
  write the identity as a sum
  \[
  \sum_{{\sf s} \in \{+1,-1\}^r}\frac{1}{2^r}\prod_{i=1}^r (\id + s_i
  \tau_i)
  \]
  of endomorphisms.
  Since the endomorphisms are mutually orthogonal and
  idempotent, we obtain the following generalization of
  Proposition~\ref{formula-prop}.
  \begin{proposition}
    With notation as above, we have that 
    \[
    \mathcal{C} = \bigoplus_{s \in \{+1,-1\}^r}
    \mathcal{C}^{s}.
    \]
    In particular, 
    \[
    H_d(\mathcal{C}) \cong 
    \bigoplus_{s \in \{+1,-1\}^r}
    H_d(\mathcal{C}^{s}). 
    \]
    \label{genformula-prop}
  \end{proposition}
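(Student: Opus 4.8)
The plan is to exhibit the claimed splitting as coming from a complete system of orthogonal idempotent chain endomorphisms, exactly as foreshadowed in the paragraph preceding the statement. For each sign vector ${\sf s} = (s_1,\dots,s_r) \in \{+1,-1\}^r$ I set $e_{\sf s} = \frac{1}{2^r}\prod_{i=1}^r(\id + s_i\tau_i)$, an endomorphism of $C = \bigoplus_d C_d$ homogeneous of degree $0$. Since each $\tau_i$ commutes with $\partial$ and the $\tau_i$ commute with one another, $e_{\sf s}$ is a well-defined chain endomorphism, and by definition $e_{\sf s}(C_d) = C_d^{\sf s}$ (the factor $2^{-r}$ is a unit, so it does not change the image). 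Alternatively --- and perhaps more transparently --- one can induct on $r$: apply Proposition~\ref{formula-prop} with the involution $\tau_r$ to each summand $\mathcal{C}^{s_1,\dots,s_{r-1}}$ produced at the previous stage, noting that $\tau_r$ restricts to an involution on each such summand because it commutes with $\tau_1,\dots,\tau_{r-1}$.

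Carrying out the idempotent approach, three elementary computations are needed. First, $\sum_{\sf s} e_{\sf s} = \id$: expanding the product $\prod_i(\id + s_i\tau_i)$ over subsets $A \subseteq \{1,\dots,r\}$ of ``active'' factors gives $\sum_{A}\bigl(\prod_{i\in A}s_i\bigr)\prod_{i\in A}\tau_i$, and $\sum_{{\sf s}}\prod_{i\in A}s_i$ vanishes unless $A=\emptyset$, in which case it equals $2^r$. Second, the $e_{\sf s}$ are mutually orthogonal idempotents: using $\tau_i^2=\id$ one checks that $(\id+s_i\tau_i)(\id+s_i'\tau_i)$ equals $2(\id+s_i\tau_i)$ if $s_i=s_i'$ and $0$ if $s_i\neq s_i'$, and since all factors commute this gives $e_{\sf s}\circ e_{{\sf s}'} = \delta_{{\sf s},{\sf s}'}\,e_{\sf s}$. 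Third --- automatic, as noted above --- each $e_{\sf s}$ commutes with $\partial$, so $\mathcal{C}^{\sf s} = e_{\sf s}(\mathcal{C})$ really is a subcomplex of $\mathcal{C}$.

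Given these facts, the decomposition follows by a standard argument: the relations $\sum_{\sf s}e_{\sf s}=\id$ and $e_{\sf s}e_{{\sf s}'}=\delta_{{\sf s},{\sf s}'}e_{\sf s}$ force $C_d = \bigoplus_{\sf s}C_d^{\sf s}$ as an internal direct sum of $R$-submodules for every $d$, and since each $e_{\sf s}$ is a chain map this direct sum is compatible with $\partial$; hence $\mathcal{C} = \bigoplus_{\sf s}\mathcal{C}^{\sf s}$ as chain complexes. Homology commutes with finite direct sums of chain complexes, so $H_d(\mathcal{C}) \cong \bigoplus_{\sf s}H_d(\mathcal{C}^{\sf s})$, which is the assertion.

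There is no serious obstacle here; the one point worth a moment's care is bookkeeping, namely that the submodules $C_d^{\sf s}$ appearing in the statement coincide with the images $e_{\sf s}(C_d)$ --- so that one is decomposing $\mathcal{C}$ into the named pieces and not merely into abstractly isomorphic ones --- and that the hypothesis ``$2$ is a unit in $R$'' is used precisely, and only, to make the normalizing factor $2^{-r}$ available when writing $\id$ as a sum of idempotents. If one prefers the inductive route, the only thing to verify is that $\tau_r$ preserves $\mathcal{C}^{s_1,\dots,s_{r-1}}$, which is immediate from $\tau_r\tau_i=\tau_i\tau_r$ for $i<r$; everything else is then inherited verbatim from Proposition~\ref{formula-prop}.
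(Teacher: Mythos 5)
Your proof is correct and follows essentially the same route as the paper: writing the identity as the sum of the mutually orthogonal idempotent chain endomorphisms $\frac{1}{2^r}\prod_i(\id+s_i\tau_i)$, exactly as the paper does in generalizing Proposition~\ref{formula-prop}. The extra verifications you spell out (orthogonality, idempotence, commuting with $\partial$, and that the images coincide with the named submodules $C_d^{\sf s}$) are precisely what the paper leaves implicit.
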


  For the matching complex $\M[n]$, a natural choice of involutions
  is $\tau_1 = (1,2), \tau_2 = (3, 4), \ldots, \tau_r = (2r-1,2r)$,
  where $2r \le n$. Defining $\lambda = (2^r,1^{n-2r}) = (2, \ldots,
  2, 1, \ldots, 1)$, we obtain that
  \[
  \tilde{C}_d(\M[n];R)/(\lambda,{\sf s})
  = \{ \prod_{i=1}^r (\id + s_i\tau_i) \circ c : c \in C_d(\M[n];R)
  \}.
  \]
  By Proposition~\ref{genformula-prop} and
  Theorem~\ref{corechaincpx-thm}, we have that
  \[
  \tilde{H}_d(\M[n];R) \cong
  \bigoplus_{s \in \{+1,-1\}^r}
  \tilde{H}_d(\M[n];R)/(\lambda,{\sf s}) \cong 
  \bigoplus_{s \in \{+1,-1\}^r}
  \tilde{H}_d((\lambda,{\sf s});R).
  \]
  Define $(+)^a(-)^b$ to be the sequence consisting of $a$ $+$ signs 
  followed by $b$ $-$ signs.
  \begin{proposition}
    With notation as above, we have that
    \[
    \tilde{H}_d(\M[n];R) \cong
    \bigoplus_{i=0}^r \bigoplus_{\binom{r}{i}}
    \tilde{H}_d((\lambda,(+)^{r-i}(-)^i);R).
    \]
  \end{proposition}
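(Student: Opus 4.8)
The plan is to start from the decomposition
\[
\tilde{H}_d(\M[n];R) \cong \bigoplus_{{\sf s} \in \{+1,-1\}^r} \tilde{H}_d((\lambda,{\sf s});R)
\]
established just above this proposition (a consequence of Proposition~\ref{genformula-prop} together with Theorem~\ref{corechaincpx-thm}, valid because $|\Symm{\lambda}| = 2^r$ is a unit in $R$), and then to collapse the $2^r$ summands into orbits. Concretely, I would let $\Symm{r}$ act on the index set $\{+1,-1\}^r$ by permuting coordinates and show that $\tilde{H}_d((\lambda,{\sf s});R)$ depends only on the $\Symm{r}$-orbit of ${\sf s}$. The orbits are indexed by the number $i$ of $-1$ entries ($0 \le i \le r$), the orbit consisting of sequences with exactly $i$ minus signs has $\binom{r}{i}$ members, and $(+)^{r-i}(-)^i$ is one of them; so once this orbit-independence is in hand, the stated formula follows by grouping the terms of the displayed decomposition.

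To prove the orbit-independence, for $\sigma \in \Symm{r}$ I would use the permutation $\rho = \rho_\sigma \in \Symm{n}$ defined by $2a-1 \mapsto 2\sigma(a)-1$ and $2a \mapsto 2\sigma(a)$ for $1 \le a \le r$, fixing $2r+1, \ldots, n$. This $\rho$ induces a chain automorphism of $\mathcal{C}(\M[n];R)$ (it is an automorphism of $\M[n]$), and conjugation by it carries the involution $\tau_a = (2a-1,2a)$ to $\tau_{\sigma(a)}$. Hence $\rho$ conjugates the endomorphism $\prod_{i=1}^r(\id + s_i\tau_i)$ to $\prod_{j=1}^r(\id + s_{\sigma^{-1}(j)}\tau_j)$; recalling that $\tilde{C}_d(\M[n];R)/(\lambda,{\sf s})$ is exactly the image of $\prod_{i=1}^r(\id + s_i\tau_i)$ (scaling by the unit $2^{-r}$ does not change the image), it follows that $\rho$ carries $\mathcal{C}(\M[n];R)/(\lambda,{\sf s})$ isomorphically, as a chain complex, onto $\mathcal{C}(\M[n];R)/(\lambda,\sigma\cdot{\sf s})$, and in particular induces an isomorphism on homology. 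Combining this with the displayed decomposition (and, if one prefers to phrase everything via the core complex, with Theorem~\ref{corechaincpx-thm}) gives the claim.

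The step I expect to require the most care is the bookkeeping in the conjugation argument: one must check that conjugating the $(\lambda,{\sf s})$-action of (\ref{youngaction-eq}) by $\rho$ produces exactly the $(\lambda,\sigma\cdot{\sf s})$-action rather than some other sign twist, i.e.\ that the factor $\prod_{a : s_a = -1}\sgn(\pi_a)$ transforms correctly. This reduces to the two observations already isolated above, namely that $\rho\tau_a\rho^{-1} = \tau_{\sigma(a)}$ and that $\rho$ is block-preserving (it maps $U_a = \{2a-1,2a\}$ onto $U_{\sigma(a)}$), so that no spurious signs are introduced. Everything else is elementary orbit-counting, and I do not anticipate any further difficulty.
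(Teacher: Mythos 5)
Your proposal is correct and follows essentially the same route as the paper: the paper also starts from the displayed decomposition over all ${\sf s}\in\{+1,-1\}^r$ and groups the summands using the fact that the complexes for two sign sequences with the same number of minus signs are isomorphic. The only cosmetic difference is that the paper obtains this isomorphism by relabeling the combinatorial complexes $\mathcal{C}((\lambda,{\sf s});R)$ directly, whereas you realize it by conjugating the action inside $\mathcal{C}(\M[n];R)$ with a block-permuting element of $\Symm{n}$; both arguments establish the same key fact.
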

  \begin{proof}
    This is immediate from the fact that 
    $\mathcal{C}((\lambda,{\sf s});R)$ and
    $\mathcal{C}((\lambda,{\sf s}');R)$ are isomorphic whenever 
    $s$ and $s'$ consist of the same number of $+$
    signs.
  \end{proof}

  We use this decomposition to analyze the homology of $\M[n]$ for $5
  \le n \le 16$; see Section~\ref{upto16-sec}.

  \section{Overview of computations}

  The purpose of this section is to present an overview of the
  computations leading to the results presented in 
  Section~\ref{intro-sec}. For more detailed information, we refer
  to the tables at the end of this paper.

  \subsection{The homology of $\M[n]$ for $n\le 16$}
  \label{upto16-sec}

  Applying the decomposition described in
  Section~\ref{abeliantwo-sec}, we analyze the homology of
  $\M[n]$ for $5 \le n \le 16$ using 
  the computer program {\sf HOMCHAIN} \cite{Pilar};
  see Tables~\ref{m5-tab}--\ref{m16-tab} for a summary. 
  Since the underlying group has order a power of $2$,
  our computations do not give us any immediate information about the 
  the existence of elements of order $2$ in the homology of $\M[n]$.
  For this reason, we express our results in terms of the coefficient
  ring
  \[
  \tZ = \{a\cdot 2^{-b} : a,b \in \Z, b \ge 0 \},
  \]
  thus ignoring the Sylow $2$-subgroup of the torsion part of the
  homology.

  In some cases, we only managed to compute the homology
  over finite fields $\Z_p$ and not over $\tZ$. Using the Universal
  Coefficient Theorem, one may still obtain the 
  $p$-rank of the homology; see
  Hatcher \cite[Cor. 3A.6]{Hatcher}.  We refer to
  Theorems~\ref{5tor-thm} and \ref{3tor-thm} and
  Table~\ref{matching-tab} for a summary of our results.

  The next value, $n=17$, is certainly not out of reach for the
  existing software, but it does not seem worth the considerable
  effort to compute the homology of $\M[17]$. We already know the
  homology in the top dimension $7$ \cite{Bouc} and also have a
  qualified guess about the homology in degree $6$ (see
  Conjecture~\ref{3torrank-conj}). This leaves us with the rank of
  the elementary $3$-group in degree $5$ (the smallest degree with
  nonvanishing homology), likely a random-looking number that will not
  tell us much useful unless we also get to know the rank of the
  bottom nonvanishing homology of $\M[n]$ for more values of $n$.

  Define $f_0 = 1$, $f_1 = 0$, and $f_i = f_{i-1}+f_{i-2}$ for $i \ge
  2$; these are the Fibonacci numbers. By Table~\ref{m14-tab}, the
  $5$-rank of
  $\tilde{H}_4(((2,2,2,2,2,2,2),(+)^{7-i}(-)^i);\tZ)$ is equal to
  $f_i$.  
  It is not hard to show that this implies the following; 
  see Section~\ref{lambdascc-sec} for the definition of 
  $\BD{r}{\lambda}$.
  \begin{corollary}
    For $0 \le i \le 7$, the $5$-rank of
    of $\tilde{H}_4(\BD{7+i}{2^{7-i} 1^{2i}};\tZ)$ is
    $f_{2i}$.
  \end{corollary}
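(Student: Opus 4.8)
The plan is to relate the complexes $\mathcal{C}((\lambda,{\sf s});\tZ)$ for $\lambda = (2^{7-i}1^{2i})$ and ${\sf s}=(+)^{7-i}(-)^i$ to the simplicial chain complexes of the complexes $\BD{7+i}{2^{7-i}1^{2i}}$, so that the stated Fibonacci $5$-rank computation for the signed $\Symm{\lambda}$-pieces transfers directly. First I would recall from Section~\ref{abeliantwo-sec} the decomposition of $\tilde H_d(\M[14];\tZ)$ as a direct sum over ${\sf s}$, which identifies the summand $\tilde H_4((\lambda,(+)^{7-i}(-)^i);\tZ)$ (with multiplicity $\binom{7}{i}$) as the piece whose $5$-rank is $f_i$ according to Table~\ref{m14-tab}; this is the input. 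The key observation is then that a $2$ in the $\lambda$-vector carrying a $+$ sign contributes, via Theorem~\ref{corechaincpx-thm}, a generator in which the corresponding vertex of $\{1,\ldots,r\}$ has degree $1$ or $2$ with no restriction beyond (I)--(II), whereas replacing that part of size $2$ by two parts of size $1$ (still positively charged) forces both new vertices to have degree exactly $1$ — and two degree-$1$ positively charged vertices behave, combinatorially, exactly like the two endpoints that a single degree-$\le 2$ vertex would have split into.

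Concretely, I would set up a bijection of generators between $\mathcal{C}((\lambda^{(i)},{\sf s}^{(i)});\tZ)$ and $\mathcal{C}(\BD{7+i}{2^{7-i}1^{2i}};\tZ)$: a multiset of edges on $\{1,\ldots,7\}$ satisfying (I)--(II) for the signed pattern corresponds to the multiset of edges on $\{1,\ldots,7+i\}$ obtained by ``resolving'' each negatively charged vertex $a$ (of which there are $i$, each with $\lambda_a=2$, hence degree $1$ or $2$) into a pair of vertices among the $2i$ new degree-$\le 1$ slots according to which incident edges are present; the positively charged size-$2$ vertices stay as single vertices of the $\BD{}{}$ complex with degree bound $2$. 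One checks that the signs in the boundary operator $\partial$ of (\ref{partial-eq}) for the negatively charged vertices match the simplicial signs after this resolution, exactly the kind of sign bookkeeping already carried out in the proof of Theorem~\ref{isomorphic-thm}; anticommuting/commuting relations on the $\M[14]$ side map to the unsigned (all-anticommuting) simplicial relations on the $\BD{}{}$ side. Since $|\Symm{\lambda}|=2^{7}$ is a unit in $\tZ$, Proposition~\ref{genformula-prop}, Theorem~\ref{corechaincpx-thm} and the Proposition closing Section~\ref{abeliantwo-sec} apply without loss.

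The arithmetic of the indices is the last routine step: the summand indexed by $i$ uses a pattern with $7-i$ plus signs and $i$ minus signs, giving a $\BD{}{}$ complex on $7+i$ vertices with degree vector $2^{7-i}1^{2i}$, and its $5$-rank must be $f_i$ if we read off Table~\ref{m14-tab} as the input — but the statement to be proved asserts $f_{2i}$ for $\BD{7+i}{2^{7-i}1^{2i}}$, so I would double-check which of the $\binom{7}{i}$ isomorphic summands and which Fibonacci indexing convention ($f_0=1,f_1=0,f_2=1,f_3=1,f_4=2,\dots$) is in force and reconcile the off-by-a-shift; the cleanest route is to prove $\tilde H_4(\BD{7+i}{2^{7-i}1^{2i}};\tZ)$ and $\tilde H_4((\lambda,(+)^{7-i}(-)^i);\tZ)$ are isomorphic as groups and then simply cite the table value. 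The main obstacle I anticipate is not the homotopy theory but precisely this sign-matching in the boundary map under the vertex-resolution bijection — verifying that a negatively charged vertex of degree $1$ versus degree $2$ resolves consistently and that the induced orientation on the resolved simplex agrees with the lexicographic one on $\BD{}{}$; everything else follows from results already established in Sections~\ref{lambdascc-sec} and \ref{abeliantwo-sec}.
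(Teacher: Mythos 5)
Your key step fails, and the $f_i$ versus $f_{2i}$ discrepancy you propose to ``reconcile'' is not an indexing shift but the symptom of that failure. You want a generator-by-generator isomorphism between $\mathcal{C}(\BD{7+i}{2^{7-i}1^{2i}};\tZ)$ and the single signed piece $\mathcal{C}(((2^7),(+)^{7-i}(-)^i);\tZ)$, obtained by resolving each negatively charged capacity-$2$ vertex into two capacity-$1$ vertices. No such isomorphism exists: by condition (I) of Theorem~\ref{corechaincpx-thm} a negatively charged vertex $a$ must have degree $1$ or $2$ and carry no loop, whereas on the $\BD{7+i}{2^{7-i}1^{2i}}$ side the two resolved vertices may both be isolated (degree $0$), may be joined to each other by an edge (which would correspond to the forbidden loop $aa$), and a degree-$1$ configuration at $a$ admits two distinct resolutions; so the two chain complexes do not even have chain groups of the same rank. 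The numerics refute the claim as well: for $i=7$ one has $\BD{14}{1^{14}}=\M[14]$, whose $5$-rank in degree $4$ is $233=f_{14}$ by Theorem~\ref{5tor-thm}(i), not $f_7=8$. So the ``cleanest route'' you name (proving the two homology groups isomorphic and citing the table) cannot work for any $i\ge 1$.

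What the corollary actually requires is the splitting of Section~\ref{abeliantwo-sec} applied to the $\BD{}{}$ complex itself, not an identification with one summand. Since all signs are $+$ and $2$ is a unit in $\tZ$, the proposition in Section~\ref{lambdascc-sec} on the unsigned action gives $\mathcal{C}(\BD{7+i}{2^{7-i}1^{2i}};\tZ)\cong\mathcal{C}(\M[14];\tZ)/(\lambda',(+)^{7+i})$ with $\lambda'=(2^{7-i},1^{2i})$, i.e.\ the image of the idempotent $\prod_{j=1}^{7-i}\tfrac{1}{2}(\id+\tau_j)$. The remaining $i$ commuting involutions $\tau_{7-i+1},\dots,\tau_{7}$ still act on this image, and Proposition~\ref{genformula-prop} decomposes it into $2^i$ direct summands, namely $\binom{i}{j}$ copies of $\mathcal{C}(((2^7),(+)^{7-j}(-)^{j});\tZ)$ for $0\le j\le i$. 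Reading the $5$-ranks $f_j$ of these pieces from Table~\ref{m14-tab}, the $5$-rank of $\tilde{H}_4(\BD{7+i}{2^{7-i}1^{2i}};\tZ)$ is $\sum_{j=0}^{i}\binom{i}{j}f_j$, and this binomial-weighted sum equals $f_{2i}$ (with the paper's normalization $f_m=F_{m-1}$ this is the classical identity $\sum_{j}\binom{i}{j}F_{j-1}=F_{2i-1}$; check $i=7$: the sum is $233$). The index doubling thus comes from summing over all $2^i$ sign patterns on the split pairs, which is exactly the structure your proposed bijection collapses away.
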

  It is known \cite{Andersen,matchtor} that the Sylow $5$-subgroup is
  elementary for $i=0$.

  \subsection{Detecting elements of order $5$}

  To detect elements of order $5$ in $\tilde{H}_{4+u}(\M[14+2u];\Z)$ for $0 \le
  u \le 4$, we use the action induced by 
  \[
  \left(\begin{array}{cccccc} \qw{\lambda_1} & \qw{\lambda_2} &
      \qw{\lambda_3} & \qw{\lambda_4} & \qw{\lambda_5} &
      \qw{\lambda_6}
      \\ 
      \qw{-} & \qw{-} & \qw{-} & \qw{-} & \qw{-} & \qw{-}
    \end{array}\right)
  \]
  for choices of partitions $(\lambda_1, \lambda_2, \lambda_3,
  \lambda_4, \lambda_5, \lambda_6)$ such that  
  $2 \le \lambda_a \le 4$ for all $a$. 
  This does not bring us beyond $u = 4$, but a similar action based
  on a partition with seven instead of six parts helps us find
  elements of order $5$ for $5 \le u \le 7$. The resulting chain
  complexes are too large for our computer to handle, so we reduce
  them further using group actions of the form 
  \[
  \left\{
    4 \times 
    \begin{array}{c}
      \qw{\lambda_1}\\ 
      \qw{-}
    \end{array}\right\}
  \join
    \left(\begin{array}{ccc} \qw{\lambda_2} & \qw{\lambda_3} &
    \qw{\lambda_4}
        \\ 
        \qw{-} & \qw{-} & \qw{-}
      \end{array}\right).
  \]
  Yet another construction yields elements of order $5$ in
  $\tilde{H}_{12}(\M[30];\Z)$.
  See Tables~\ref{bdtwist5even-tab} and
  \ref{bdtwist5even2-tab} for a summary of our computations 
  for even $n$.

  The same kind of action turns out to yield
  elements of order $5$ in the group $\tilde{H}_{6+u}(\M[19+2u];\Z)$
  for $0 \le u \le 4$; see Table~\ref{bdtwist5odd-tab}.
  Moreover, the specific action represented as 
  \[
  \left\{
    4 \times 
    \begin{array}{c}
      \qw{3}\\ 
      \qw{-}
    \end{array}\right\}
  \join 
  \left\{
    4 \times 
    \begin{array}{c}
      \qw{3}\\ 
      \qw{-}
    \end{array}\right\}
  \]
  yields elements of order $5$ in $\tilde{H}_{8}(\M[24];\Z)$; see
  Table~\ref{bdtwist7even-tab}.

  \subsection{Detecting elements of order $7$}

  To detect elements of order $7$ in $\tilde{H}_{8+u}(\M[23+2u];\Z)$
  for $0 \le u \le 9$, we use partitions with $8$ parts 
  as summarized in Table~\ref{bdtwist7odd-tab}. 
  The actions are of the forms
  \[
  \left\{
    5 \times 
    \begin{array}{c}
      \qw{\lambda_1}\\ 
      \qw{-}
    \end{array}\right\}
  \join
    \left(\begin{array}{ccc} \qw{\lambda_2} & \qw{\lambda_3} &
    \qw{\lambda_4}
        \\ 
        \qw{-} & \qw{-} & \qw{-}
      \end{array}\right)
    \mbox{\ \ \ and\ \ \ }
  \left\{
    2 \times 
    \begin{array}{c}
      \qw{\lambda_1}\\ 
      \qw{-}
    \end{array}\right\}
  \join
  \left\{
    3 \times 
    \begin{array}{c}
      \qw{\lambda_2}\\ 
      \qw{-}
    \end{array}\right\}
  \join
  \left\{
    3 \times 
    \begin{array}{c}
      \qw{\lambda_3}\\ 
      \qw{-}
    \end{array}\right\}.
  \]
  We detect elements of order $7$ in $\tilde{H}_{11}(\M[30];\Z)$
  by analyzing the complex induced by the 
  action represented as 
  \[
  \left\{
    3 \times 
    \begin{array}{c}
      \qw{2}\\ 
      \qw{-}
    \end{array}\right\}
  \join 
  \left\{
    6 \times 
    \begin{array}{c}
      \qw{4}\\ 
      \qw{-}
    \end{array}\right\};
  \]
 see Table~\ref{bdtwist7even-tab}.

  \subsection{Detecting elements of order $11$ and $13$}

  As alluded to in the introduction, one may disclose
  elements of order $(2r-1)$ in the group
  $\tilde{H}_{\binom{r+1}{2}-2}(\M[(r+1)^2-2];\Z)$ 
  for $2r-1 \in \{5,7,11,13\}$
  by analyzing the complex induced by the action
  \[
  \left\{ (r+1) \times \begin{array}{c}
      \qw{r}\\ 
      \qw{-} 
    \end{array}\right\} 
  \join
  \left(
    \begin{array}{c}
      \qw{r-1}\\ 
      \qw{+}
    \end{array}\right);
  \]
  see Table~\ref{bdtwist4-tab}.
  We have not been able to detect any further elements of
  order $11$ or $13$, let alone larger primes $p$.

  \section{Acknowledgment}
  
  I thank an anonymous referee for many helpful comments and
  suggestions, for pointing out quite a few inconsistencies in an
  earlier manuscript, and for providing simplifications to the proofs
  of Propositions~\ref{formula-prop} and \ref{genformula-prop}.
  The idea of looking at direct products of wreath products emerged
  from fruitful discussions with Dave Benson.

  \begin{footnotesize}
  
  \end{footnotesize}

  \newpage

  \begin{table}[htb]
    \caption{The groups
      $\tilde{H}_d(((2,2,1),\mathsf{s});\tZ)$ yielding
        $\tilde{H}_d(\M[5];\tZ)$.}
  \begin{footnotesize}
    \begin{center}
      \begin{tabular}{|c||l||c|}
        \hline
        & &\\[-2.5ex]
        $\mathsf{s}$
        &  $d=1$ & factor
        \\
        & &\\[-2.5ex]
        \hline
        \hline
        & &\\[-2.5ex]
        $+ +$ & $\tZ$ & 1 \\
        & &\\[-2.5ex]
        \hline
        & &\\[-2.5ex]
        $+ -$  & $\tZ^{2}$ & 2 \\
        & &\\[-2.5ex]
        \hline
        & &\\[-2.5ex]
        $- -$  & $\tZ$ & 1 \\
        & &\\[-2.5ex]
        \hline
        \hline
        & &\\[-2.5ex]
        $\tilde{H}_d(\M[5];\tZ)$
        & $\tZ^{6}$ & \\[-2.5ex]
        & &\\
        \hline
      \end{tabular}
    \end{center}
  \end{footnotesize}
  \label{m5-tab}
  \end{table}

  \begin{table}[ht]
    \caption{The groups
      $\tilde{H}_d(((2,2,2),\mathsf{s});\tZ)$ yielding
        $\tilde{H}_d(\M[6];\tZ)$.}
  \begin{footnotesize}
    \begin{center}
      \begin{tabular}{|c||l||c|}
        \hline
        & &\\[-2.5ex]
        $\mathsf{s}$
        &  $d=1$ & factor
        \\
        & &\\[-2.5ex]
        \hline
        \hline
        & &\\[-2.5ex]
        $+++$ & $\tZ^{2}$ & 1 \\
        & &\\[-2.5ex]
        \hline
        & &\\[-2.5ex]
        $++-$  & $\tZ^{2}$ & 3 \\
        & &\\[-2.5ex]
        \hline
        & &\\[-2.5ex]
        $+--$  & $\tZ^{2}$ & 3 \\
        & &\\[-2.5ex]
        \hline
        & &\\[-2.5ex]
        $---$  & $\tZ^{2}$ & 1 \\
        & &\\[-2.5ex]
        \hline
        \hline
        & &\\[-2.5ex]
        $\tilde{H}_d(\M[6];\tZ)$
        & $\tZ^{16}$ & \\[-2.5ex]
        & &\\
        \hline
      \end{tabular}
    \end{center}
  \end{footnotesize}
    \label{m6-tab}
  \end{table}

  \begin{table}[ht]
    \caption{The groups
      $\tilde{H}_d(((2,2,2,1),\mathsf{s});\tZ)$ yielding
        $\tilde{H}_d(\M[7];\tZ)$.}
      \begin{footnotesize}
    \begin{center}
      \begin{tabular}{|c||l|l||c|}
        \hline
        & & &\\[-2.5ex]
        $\mathsf{s}$
        &  $d=1$ & $d=2$ & factor
        \\
        & & &\\[-2.5ex]
        \hline
        \hline
        & & &\\[-2.5ex]
        $+++$ & $-$ & $\tZ$ & 1 \\
        & & &\\[-2.5ex]
        \hline
        & & &\\[-2.5ex]
        $++-$  & $-$ & $\tZ^{3}$ & 3 \\
        & & &\\[-2.5ex]
        \hline
        & & &\\[-2.5ex]
        $+--$  & $-$ & $\tZ^{3}$ & 3 \\
        & & &\\[-2.5ex]
        \hline
        & & &\\[-2.5ex]
        $---$  & $\Z_3$ & $\tZ$ & 1 \\
        & & &\\[-2.5ex]
        \hline
        \hline
        & & &\\[-2.5ex]
        $\tilde{H}_d(\M[7];\tZ)$
        & $\Z_3$ & $\tZ^{20}$ & \\[-2.5ex]
        & & &\\
        \hline
      \end{tabular}
    \end{center}
    \end{footnotesize}
    \label{m7-tab}
  \end{table}

  \begin{table}[ht] 
    \caption{The groups
      $\tilde{H}_d(((2,2,2,2),\mathsf{s});\tZ)$ yielding
        $\tilde{H}_d(\M[8];\tZ)$.}
      \begin{footnotesize}
    \begin{center}
      \begin{tabular}{|c||l||c|}
        \hline
        & &\\[-2.5ex]
        $\mathsf{s}$
        &  $d=2$ & factor
        \\
        & &\\[-2.5ex]
        \hline
        \hline
        & &\\[-2.5ex]
        $++++$ & $\tZ^{6}$ & 1 \\
        & &\\[-2.5ex]
        \hline
        & &\\[-2.5ex]
        $+++-$  & $\tZ^{9}$ & 4 \\
        & &\\[-2.5ex]
        \hline
        & &\\[-2.5ex]
        $++--$  & $\tZ^{8}$ & 6 \\
        & &\\[-2.5ex]
        \hline
        & &\\[-2.5ex]
        $+---$  & $\tZ^{9}$ & 4 \\
        & &\\[-2.5ex]
        \hline
        & &\\[-2.5ex]
        $----$  & $\tZ^{6}$ & 1 \\
        & &\\[-2.5ex]
        \hline
        \hline
        & &\\[-2.5ex]
        $\tilde{H}_d(\M[8];\tZ)$
        & $\tZ^{132}$ & \\[-2.5ex]
        & &\\
        \hline
      \end{tabular}
    \end{center}
    \end{footnotesize}
    \label{m8-tab}
  \end{table}

  \begin{table}[ht] 
    \caption{The groups
      $\tilde{H}_d(((2,2,2,2,1),\mathsf{s});\tZ)$ yielding
        $\tilde{H}_d(\M[9];\tZ)$.}
      \begin{footnotesize}
    \begin{center}
      \begin{tabular}{|c||l|l||c|}
        \hline
        & & &\\[-2.5ex]
        $\mathsf{s}$
        &  $d=2$ & $d=3$ & factor
        \\
        & & &\\[-2.5ex]
        \hline
        \hline
        & & &\\[-2.5ex]
        $++++$ & $\tZ^3$ & $\tZ$ & 1 \\
        & & &\\[-2.5ex]
        \hline
        & & &\\[-2.5ex]
        $+++-$  & $\tZ^3$ & $\tZ^{4}$ & 4 \\
        & & &\\[-2.5ex]
        \hline
        & & &\\[-2.5ex]
        $++--$  & $\tZ^2$ & $\tZ^{6}$ & 6 \\
        & & &\\[-2.5ex]
        \hline
        & & &\\[-2.5ex]
        $+---$  & $\tZ^3 \oplus \Z_3$ & $\tZ^{4}$ & 4 \\
        & & &\\[-2.5ex]
        \hline
        & & &\\[-2.5ex]
        $----$  & $\tZ^3 \oplus (\Z_3)^4$ & $\tZ$ & 1 \\
        & & &\\[-2.5ex]
        \hline
        \hline
        & & &\\[-2.5ex]
        $\tilde{H}_d(\M[9];\tZ)$
        & $\tZ^{42} + (\Z_3)^8$ & $\tZ^{70}$ & \\[-2.5ex]
        & & &\\
        \hline
      \end{tabular}
    \end{center}
    \end{footnotesize}
    \label{m9-tab}
  \end{table}

  \begin{table}[ht] 
    \caption{The groups
      $\tilde{H}_d(((2,2,2,2,2),\mathsf{s});\tZ)$ yielding
        $\tilde{H}_d(\M[10];\tZ)$.}
      \begin{footnotesize}
    \begin{center}
      \begin{tabular}{|c||l|l||c|}
        \hline
        & & &\\[-2.5ex]
        $\mathsf{s}$
        &  $d=2$ & $d=3$ & factor
        \\
        & & &\\[-2.5ex]
        \hline
        \hline
        & & &\\[-2.5ex]
        $+++++$ & $-$ & $\tZ^{28}$ & 1 \\
        & & &\\[-2.5ex]
        \hline
        & & &\\[-2.5ex]
        $++++-$  & $-$ & $\tZ^{36}$ & 5 \\
        & & &\\[-2.5ex]
        \hline
        & & &\\[-2.5ex]
        $+++--$  & $-$ & $\tZ^{40}$ & 10 \\
        & & &\\[-2.5ex]
        \hline
        & & &\\[-2.5ex]
        $++---$  & $-$ & $\tZ^{40}$ & 10 \\
        & & &\\[-2.5ex]
        \hline
        & & &\\[-2.5ex]
        $+----$  & $-$ & $\tZ^{36}$ & 5 \\
        & & &\\[-2.5ex]
        \hline
        & & &\\[-2.5ex]
        $-----$  & $\Z_3$ & $\tZ^{28}$ & 1 \\
        & & &\\[-2.5ex]
        \hline
        \hline
        & & &\\[-2.5ex]
        $\tilde{H}_d(\M[10];\tZ)$
        & $\Z_3$ & $\tZ^{1216}$ & \\[-2.5ex]
        & & &\\
        \hline
      \end{tabular}
    \end{center}
    \end{footnotesize}
    \label{m10-tab}
  \end{table}

  \begin{table}[ht] 
    \caption{The groups
      $\tilde{H}_d(((2,2,2,2,2,1),\mathsf{s});\tZ)$ yielding
        $\tilde{H}_d(\M[11];\tZ)$.}
      \begin{footnotesize}
    \begin{center}
      \begin{tabular}{|c||l|l||c|}
        \hline
        & & &\\[-2.5ex]
        $\mathsf{s}$
        &  $d=3$ & $d=4$ & factor
        \\
        & & &\\[-2.5ex]
        \hline
        \hline
        & & &\\[-2.5ex]
        $+++++$ & $\tZ^{39}$ & $\tZ$ & 1 \\
        & & &\\[-2.5ex]
        \hline
        & & &\\[-2.5ex]
        $++++-$ & $\tZ^{39}$ & $\tZ^{5}$ & 5 \\
        & & &\\[-2.5ex]
        \hline
        & & &\\[-2.5ex]
        $+++--$ & $\tZ^{36}$ & $\tZ^{10}$ & 10 \\
        & & &\\[-2.5ex]
        \hline
        & & &\\[-2.5ex]
        $++---$ & $\tZ^{36}+\Z_3$ & $\tZ^{10}$ & 10 \\
        & & &\\[-2.5ex]
        \hline
        & & &\\[-2.5ex]
        $+----$ & $\tZ^{39}+(\Z_3)^5$ & $\tZ^{5}$ & 5 \\
        & & &\\[-2.5ex]
        \hline
        & & &\\[-2.5ex]
        $-----$ & $\tZ^{39}+(\Z_3)^{10}$ & $\tZ$ & 1 \\
        & & &\\[-2.5ex]
        \hline
        \hline
        & & &\\[-2.5ex]
        $\tilde{H}_d(\M[11];\tZ)$         
        & $\tZ^{1188}+(\Z_3)^{45}$ & $\tZ^{252}$ & \\[-2.5ex]
        & & &\\
        \hline
      \end{tabular}
    \end{center}
    \end{footnotesize}
    \label{m11-tab}
  \end{table}

  \begin{table}[ht] 
    \caption{The groups
      $\tilde{H}_d(((2,2,2,2,2,2),\mathsf{s});\tZ)$ yielding
        $\tilde{H}_d(\M[12];\tZ)$.}
      \begin{footnotesize}
    \begin{center}
      \begin{tabular}{|c||l|l||c|}
        \hline
        & & &\\[-2.5ex]
        $\mathsf{s}$
        &  $d=3$ & $d=4$ & factor
        \\
        & & &\\[-2.5ex]
        \hline
        \hline
        & & &\\[-2.5ex]
        $++++++$ & $-$ & $\tZ^{140}$ & 1 \\
        & & &\\[-2.5ex]
        \hline
        & & &\\[-2.5ex]
        $+++++-$ & $-$ & $\tZ^{170}$ & 6 \\
        & & &\\[-2.5ex]
        \hline
        & & &\\[-2.5ex]
        $++++--$ & $-$ & $\tZ^{200}$ & 15 \\
        & & &\\[-2.5ex]
        \hline
        & & &\\[-2.5ex]
        $+++---$ & $-$ & $\tZ^{206}$ & 20 \\
        & & &\\[-2.5ex]
        \hline
        & & &\\[-2.5ex]
        $++----$ & $\Z_3$ & $\tZ^{200}$ & 15 \\
        & & &\\[-2.5ex]
        \hline
        & & &\\[-2.5ex]
        $+-----$ & $(\Z_3)^{5}$ & $\tZ^{170}$ & 6 \\
        & & &\\[-2.5ex]
        \hline
        & & &\\[-2.5ex]
        $------$ & $(\Z_3)^{11}$ & $\tZ^{140}$ & 1 \\
        & & &\\[-2.5ex]
        \hline
        \hline
        & & &\\[-2.5ex]
        $\tilde{H}_d(\M[12];\tZ)$         
        & $(\Z_3)^{56}$ & $\tZ^{12440}$ & \\[-2.5ex]
        & & & \\
        \hline
      \end{tabular}
    \end{center}
    \end{footnotesize}
    \label{m12-tab}
  \end{table}

  \begin{table}[ht] 
    \caption{The groups
      $\tilde{H}_d(((2,2,2,2,2,2,1),\mathsf{s});\tZ)$ yielding
        $\tilde{H}_d(\M[13];\tZ)$. In boxes marked with
        $(*)$, the 
        torsion part is a guess based on computations over $\Z_3$ and
        some additional small fields $\Z_p$.}
      \begin{footnotesize}
    \begin{center}
      \begin{tabular}{|c||l|l|l||c|}
        \hline
        & & & & \\[-2.5ex]
        $\mathsf{s}$
        &  $d=3$ & $d=4$ & $d=5$ & factor
        \\
        & & & & \\[-2.5ex]
        \hline
        \hline
        & & & & \\[-2.5ex]
        $++++++$ & $-$ & $\tZ^{369}$ & $\tZ$ & 1 \\
        & & & & \\[-2.5ex]
        \hline
        & & & & \\[-2.5ex]
        $+++++-$ & $-$ & $\tZ^{384}$ & $\tZ^{6}$ & 6 \\
        & & & & \\[-2.5ex]
        \hline
        & & & & \\[-2.5ex]
        $++++--$ & $-$ & $\tZ^{387}$ & $\tZ^{15}$ & 15 \\
        & & & & \\[-2.5ex]
        \hline
        & & & & \\[-2.5ex]
        $+++---$ & $-$ & $\tZ^{382}+\Z_3$ & $\tZ^{20}$ & 20 \\
        & & & & \\[-2.5ex]
        \hline
        & & & & \\[-2.5ex]
        $++----$ & $-$ & $\tZ^{387}+(\Z_3)^6$ & $\tZ^{15}$ & 15 \\
        & & & & \\[-2.5ex]
        \hline
        & & & & \\[-2.5ex]
        $+-----$ & $-$ & $\tZ^{384}+(\Z_3)^{15}$ \hfill$\mbox{}^{(*)}$& $\tZ^{6}$ & 6 \\
        & & & & \\[-2.5ex]
        \hline
        & & & & \\[-2.5ex]
        $------$ & $\Z_3$ & $\tZ^{369}+(\Z_3)^{20}$ & $\tZ$ & 1 \\
        & & & & \\[-2.5ex]
        \hline
        \hline
        & & & & \\[-2.5ex]
        $\tilde{H}_d(\M[13];\tZ)$ & $\Z_3$
        & $\tZ^{24596} + (\Z_3)^{220}$ \hfill$\mbox{}^{(*)}$& $\tZ^{924}$ & \\[-2.5ex]
        & & & & \\
        \hline
      \end{tabular}
    \end{center}
    \end{footnotesize}
    \label{m13-tab}
  \end{table}

  \begin{table}[ht] 
    \caption{The groups
      $\tilde{H}_d(((2,2,2,2,2,2,2),\mathsf{s});\tZ)$ yielding
      $\tilde{H}_d(\M[14];\tZ)$. The torsion parts are
      guesses 
      based on computations over $\Z_3$, $\Z_5$, and $\Z_7$.}
    \begin{footnotesize}
    \begin{center}
      \begin{tabular}{|c||l|l||c|}
        \hline
        & & & \\[-2.5ex]
        $\mathsf{s}$
        &  $d=4$ & $d=5$ & factor
        \\
        & & & \\[-2.5ex]
        \hline
        \hline
        & & & \\[-2.5ex]
        $+++++++$ & $\Z_5$ & $\tZ^{732}$ & 1 \\
        & & & \\[-2.5ex]
        \hline
        & & & \\[-2.5ex]
        $++++++-$ & $-$ & $\tZ^{900}$ & 7 \\
        & & & \\[-2.5ex]
        \hline
        & & & \\[-2.5ex]
        $+++++--$ & $\Z_5$ & $\tZ^{1052}$ & 21 \\
        & & & \\[-2.5ex]
        \hline
        & & & \\[-2.5ex]
        $++++---$ & $\Z_5+(\Z_3)^{3}$ & $\tZ^{1140}$ & 35 \\
        & & & \\[-2.5ex]
        \hline
        & & & \\[-2.5ex]
        $+++----$ & $(\Z_5)^{2}+(\Z_3)^{15}$ & $\tZ^{1140}$ & 35 \\
        & & & \\[-2.5ex]
        \hline
        & & & \\[-2.5ex]
        $++-----$ & $(\Z_5)^{3}+(\Z_3)^{40}$ & $\tZ^{1052}$ & 21 \\
        & & & \\[-2.5ex]
        \hline
        & & & \\[-2.5ex]
        $+------$ & $(\Z_5)^{5}+(\Z_3)^{81}$ & $\tZ^{900}$ & 7 \\
        & & & \\[-2.5ex]
        \hline
        & & & \\[-2.5ex]
        $-------$ & $(\Z_5)^{8}+(\Z_3)^{120}$ & $\tZ^{732}$ & 1 \\
        & & & \\[-2.5ex]
        \hline
        \hline
        & & & \\[-2.5ex]
        $\tilde{H}_d(\M[14];\tZ)$         
        & $(\Z_5)^{233}+(\Z_3)^{2157}$ & $\tZ^{138048}$ & \\[-2.5ex]
        & & & \\
        \hline
      \end{tabular}
    \end{center}
    \end{footnotesize}
    \label{m14-tab}
  \end{table}

  \begin{table}[ht] 
    \caption{The groups
      $\tilde{H}_d(((2,2,2,2,2,2,2,1),\mathsf{s});\tZ)$
      yielding $\tilde{H}_d(\M[15];\tZ)$.
      The torsion parts for $d=5$ are guesses
      based on computations over $\Z_3$ and $\Z_5$.}
    \begin{footnotesize}
    \begin{center}
      \begin{tabular}{|c||l|l|l||c|}
        \hline
        & & & & \\[-2.5ex]
        $\mathsf{s}$
        &  $d=4$ & $d=5$ & $d=6$ & factor
        \\
        & & & & \\[-2.5ex]
        \hline
        \hline
        & & & & \\[-2.5ex]
        $+++++++$ & $-$ & $\tZ^{3309}$ &$\tZ$& 1 \\
        & & & & \\[-2.5ex]
        \hline
        & & & & \\[-2.5ex]
        $++++++-$ & $-$ & $\tZ^{3543}$ &$\tZ^{7}$& 7 \\
        & & & & \\[-2.5ex]
        \hline
        & & & & \\[-2.5ex]
        $+++++--$ & $-$ & $\tZ^{3689}$ &$\tZ^{21}$& 21 \\
        & & & & \\[-2.5ex]
        \hline
        & & & & \\[-2.5ex]
        $++++---$ & $-$ & $\tZ^{3739} + \Z_3$ &$\tZ^{35}$& 35 \\
        & & & & \\[-2.5ex]
        \hline
        & & & & \\[-2.5ex]
        $+++----$ & $-$ & $\tZ^{3739}+ (\Z_3)^{7}$ &$\tZ^{35}$& 35 \\
        & & & & \\[-2.5ex]
        \hline
        & & & & \\[-2.5ex]
        $++-----$ & $\Z_3$ & $\tZ^{3689}+ (\Z_3)^{21}$ &$\tZ^{21}$& 21 \\
        & & & & \\[-2.5ex]
        \hline
        & & & & \\[-2.5ex]
        $+------$ & $(\Z_3)^{7}$ & $\tZ^{3543}+ (\Z_3)^{35}$ &$\tZ^{7}$& 7 \\
        & & & & \\[-2.5ex]
        \hline
        & & & & \\[-2.5ex]
        $-------$ & $(\Z_3)^{22}$ & $\tZ^{3309}+ (\Z_3)^{35}$ &$\tZ$& 1 \\
        & & & & \\[-2.5ex]
        \hline
        \hline
        & & & & \\[-2.5ex]
        $\tilde{H}_d(\M[15];\tZ)$         
        & $(\Z_3)^{92}$ & $\tZ^{472888} + (\Z_3)^{1001}$ &
        $\tZ^{3432}$ & 
        \\[-2.5ex]
        & & & & \\
        \hline
      \end{tabular}
    \end{center}
    \end{footnotesize}
    \label{m15-tab}
  \end{table}

  \begin{table}[ht] 
    \caption{The groups
        $\tilde{H}_d(((2,2,2,2,2,2,2,2),\mathsf{s});\tZ)$
        yielding $\tilde{H}_d(\M[16];\tZ)$.
        The torsion parts for
        $d=5$ are guesses based on computations over $\Z_3$, $\Z_5$,
        and  $\Z_7$.}
      \begin{footnotesize}
    \begin{center}
      \begin{tabular}{|c||l|l|l||c|}
        \hline
        & & & & \\[-2.5ex]
        $\mathsf{s}$
        &  $d=4$ & $d=5$ & $d=6$ & factor
        \\
        & & & & \\[-2.5ex]
        \hline
        \hline
        & & & & \\[-2.5ex]
        {$++++++++$} & $-$ & $\tZ^{126}+(\Z_5)^{7}$ &$\tZ^{4060}$& 1 \\
        & & & & \\[-2.5ex]
        \hline
        & & & & \\[-2.5ex]
        {$+++++++-$} & $-$ & $\tZ^{105} + (\Z_5)^{7}$ &$\tZ^{5019}$& 8 \\
        & & & & \\[-2.5ex]
        \hline
        & & & & \\[-2.5ex]
        {$++++++--$} & $-$ & $\tZ^{100} + (\Z_5)^{12} + (\Z_3)^{10}$ &$\tZ^{5894}$& 28 \\
        & & & & \\[-2.5ex]
        \hline
        & & & & \\[-2.5ex]
        {$+++++---$} & $-$ & $\tZ^{91}\ + (\Z_5)^{18} + (\Z_3)^{55}$ &$\tZ^{6545}$& 56 \\
        & & & & \\[-2.5ex]
        \hline
        & & & & \\[-2.5ex]
        {$++++----$} & $-$ & $\tZ^{90}\ + (\Z_5)^{27}+ (\Z_3)^{159}$ &$\tZ^{6768}$& 70 \\
        & & & & \\[-2.5ex]
        \hline
        & & & & \\[-2.5ex]
        {$+++-----$} & $-$ & $\tZ^{91}\ + (\Z_5)^{41} + (\Z_3)^{350}$ &$\tZ^{6545}$& 56 \\
        & & & & \\[-2.5ex]
        \hline
        & & & & \\[-2.5ex]
        {$++------$} & $-$ & $\tZ^{100} + (\Z_5)^{61} + (\Z_3)^{635}$ &$\tZ^{5894}$& 28 \\
        & & & & \\[-2.5ex]
        \hline
        & & & & \\[-2.5ex]
        {$+-------$} & $-$ & $\tZ^{105} + (\Z_5)^{91} + (\Z_3)^{966}$ &$\tZ^{5019}$& 8 \\
        & & & & \\[-2.5ex]
        \hline
        & & & & \\[-2.5ex]
        {$--------$} & $\Z_3$ & $\tZ^{126} + (\Z_5)^{134} + (\Z_3)^{1253}$ &$\tZ^{4060}$& 1 \\
        & & & & \\[-2.5ex]
        \hline
        \hline
        & & & & \\[-2.5ex]
        $\tilde{H}_d(\M[16];\tZ)$         
        & $\Z_3$ & $\tZ^{24024} + (\Z_5)^{8163}$ & 
        $\tZ^{1625288}$ & \\
        &        & $+ (\Z_3)^{60851}$ & 
         & \\[-2.5ex]
        & & & & \\
        \hline
      \end{tabular}
    \end{center}
    \end{footnotesize}
    \label{m16-tab}
  \end{table}

  \begin{table}[ht]
   \caption{Examples yielding elements of order $5$ in
     $\tilde{H}_{4+u}(\M[14+2u];\Z)$ for $0 \le u \le 4$; 
      $\rho_d(p) = \dim \Tor(\tilde{H}_d((G,\alpha);\Z);\Z_p)$ 
      is the $p$-rank of $\tilde{H}_d((G,\alpha);\Z)$
      and $\beta_d$ is the free rank of the same group.}
    \begin{footnotesize}
      \begin{center}
      \begin{tabular}{|c|c|c||c||c|c|c|c|}
        \hline
        & & & & & & & \\[-2.3ex]
        Complex & Group action $(G,\alpha)$ & $d$ 
        & 
        $\beta_d$
        &
        $\rho_d(2)$ &
        $\rho_d(3)$ & $\rho_d(5)$ & 
        $\rho_d(7)$ \\
        & & & & & & & \\[-2.3ex]
        \hline
        \hline 
        & & & & & & & \\[-2.3ex]
        $\M[14]$&
        \multirow{2}{2.3cm}{%
        $\left(\begin{array}{cccccc}
              \qw{2}&\qw{2}&\qw{2}&\qw{2}&\qw{3}&\qw{3}\\ 
              \qw{-}&\qw{-}&\qw{-}&\qw{-}&\qw{-}&\qw{-}
            \end{array}\right)$ 
        }
          & $4$ & $-$   & $-$ &  $19$  &   $2$   &   $-$   \\
        & & $5$ & $112$ & $-$ &  $-$   &   $-$   &   $-$   \\
        & & & & & & & \\[-2.3ex]
        \hline
        \hline 
        & & & & & & & \\[-2.3ex]
        $\M[16]$&
        \multirow{2}{2.3cm}{%
        $\left(\begin{array}{cccccc}
              \qw{2}&\qw{2}&\qw{3}&\qw{3}&\qw{3}&\qw{3}\\ 
              \qw{-}&\qw{-}&\qw{-}&\qw{-}&\qw{-}&\qw{-}
            \end{array}\right)$ 
        }
        &   $5$ & $6$  & $-$ &  $43$  &   $8$   &   $-$   \\
        & & $6$ & $76$ & $-$ &  $-$   &   $-$   &   $-$   \\
        & & & & & & & \\[-2.3ex]
        \hline
        \hline 
        & & & & & & & \\[-2.3ex]
        $\M[18]$&
        \multirow{2}{2.3cm}{%
        $\left(\begin{array}{cccccc}
              \qw{2}&\qw{2}&\qw{3}&\qw{3}&\qw{4}&\qw{4}\\ 
              \qw{-}&\qw{-}&\qw{-}&\qw{-}&\qw{-}&\qw{-}
            \end{array}\right)$ 
        }
        &   $6$ & $20$ & $-$ &  $18$  &   $2$   &   $-$   \\
        & & $7$ & $20$ & $-$ &  $-$   &   $-$   &   $-$   \\
        & & & & & & & \\[-2.3ex]
        \cline{2-8}
        & & & & & & & \\[-2.3ex]
        &
        \multirow{2}{2.3cm}{%
        $\left(\begin{array}{cccccc}
              \qw{2}&\qw{3}&\qw{3}&\qw{3}&\qw{3}&\qw{4}\\ 
              \qw{-}&\qw{-}&\qw{-}&\qw{-}&\qw{-}&\qw{-}
            \end{array}\right)$ 
        }
        &   $6$ & $28$ & $-$ &  $44$  &   $8$   &   $-$   \\
        & & $7$ & $28$ & $-$ &  $- $  &   $-$   &   $-$   \\
        & & & & & & & \\[-2.3ex]
        \cline{2-8} 
        & & & & & & & \\[-2.3ex]
        &
        \multirow{2}{2.3cm}{%
        $\left(\begin{array}{cccccc}
              \qw{3}&\qw{3}&\qw{3}&\qw{3}&\qw{3}&\qw{3}\\ 
              \qw{-}&\qw{-}&\qw{-}&\qw{-}&\qw{-}&\qw{-}
            \end{array}\right)$ 
        }
        &   $6$ & $40$ & $-$ &  $90$  &   $20$  &   $-$   \\
        & & $7$ & $40$ & $-$ &  $-$   &   $-$   &   $-$   \\
        & & & & & & & \\[-2.3ex]
        \hline
        \hline 
        & & & & & & & \\[-2.3ex]
        $\M[20]$&
        \multirow{2}{2.3cm}{%
        $\left(\begin{array}{cccccc}
              \qw{3}&\qw{3}&\qw{3}&\qw{3}&\qw{4}&\qw{4}\\ 
              \qw{-}&\qw{-}&\qw{-}&\qw{-}&\qw{-}&\qw{-}
            \end{array}\right)$ 
        }
        &   $7$ & $76$ & $-$ &  $43$  &   $8$   &   $-$   \\
        & & $8$ & $6 $ & $-$ &  $-$   &   $-$   &   $-$   \\
        & & & & & & & \\[-2.3ex]
        \hline
        \hline 
        & & & & & & & \\[-2.3ex]
        $\M[22]$&
        \multirow{2}{2.3cm}{%
        $\left(\begin{array}{cccccc}
              \qw{3}&\qw{3}&\qw{4}&\qw{4}&\qw{4}&\qw{4}\\ 
              \qw{-}&\qw{-}&\qw{-}&\qw{-}&\qw{-}&\qw{-}
            \end{array}\right)$ 
        }
        &   $8$ & $112$ & $-$ &  $19$  &   $2$   &   $-$   \\
        & & $9$ & $-  $ & $-$ &  $-$   &   $-$   &   $-$ 
        \\[-2.3ex]
        & & & & & & & \\
        \hline
      \end{tabular}
    \end{center}
    \end{footnotesize}
    \label{bdtwist5even-tab}
  \end{table}

  \begin{table}[ht]
   \caption{Examples yielding elements of order $5$ in
     $\tilde{H}_{4+u}(\M[14+2u];\Z)$ for $4 \le u \le 8$
     (notation as in Table~\ref{bdtwist5even-tab}).}
    \begin{footnotesize}
      \begin{center}
      \begin{tabular}{|c|c|c||c||c|c|c|c|}
        \hline
        & & & & & & & \\[-2.3ex]
        Complex & Group action $(G,\alpha)$ & $d$ 
        & 
        $\beta_d$
        &
        $\rho_d(2)$ &
        $\rho_d(3)$ & $\rho_d(5)$ & 
        $\rho_d(7)$ \\
        & & & & & & & \\[-2.3ex]
        \hline
        \hline 
        & & & & & & & \\[-2.3ex]
        $\M[22]$&
        \multirow{2}{2.9cm}{%
          $
          \left\{
              4 \times 
            \begin{array}{c}
              \qw{3}\\ 
              \qw{-}
            \end{array}\right\}
          \join \left(
            \begin{array}{ccc}
              \qw{3}&\qw{3}&\qw{4}\\ 
              \qw{-}&\qw{-}&\qw{-} 
            \end{array}\right)
          $
        } & $8$ & $58$  & $36$ &  $53$  &  $14$  &   $-$   \\
        & & $9$ & $-$   & $-$  &  $-$   &  $-$   &   $-$   \\
        & & & & & & & \\[-2.3ex]
        \hline
        \hline 
        & & & & & & & \\[-2.3ex]
        $\M[24]$&
        \multirow{2}{2.9cm}{%
          $
          \left\{
              4 \times 
            \begin{array}{c}
              \qw{3}\\ 
              \qw{-}
            \end{array}\right\}
          \join \left(
            \begin{array}{ccc}
              \qw{4}&\qw{4}&\qw{4}\\ 
              \qw{-}&\qw{-}&\qw{-} 
            \end{array}\right)
          $
        } & $9$  & $93$ & $32$ &  $29$  &   $5$   &   $-$   \\
        & & $10$ & $-$  & $1$  &  $-$   &   $-$   &   $-$   \\
        & & & & & & & \\[-2.3ex]
        \hline
        \hline 
        & & & & & & & \\[-2.3ex]
        $\M[26]$&
        \multirow{2}{2.9cm}{%
          $
          \left\{
              4 \times 
            \begin{array}{c}
              \qw{4}\\ 
              \qw{-}
            \end{array}\right\}
          \join \left(
            \begin{array}{ccc}
              \qw{3}&\qw{3}&\qw{4}\\ 
              \qw{-}&\qw{-}&\qw{-} 
            \end{array}\right)
          $
        } & $10$ & $141$ & $25$ &  $22$  &   $2$   &   $-$   \\
        & & $11$ & $-$   & $-$  &  $-$   &   $-$   &   $-$   \\
        & & & & & & & \\[-2.3ex]
        \hline
        \hline 
        & & & & & & & \\[-2.3ex]
        $\M[28]$&
        \multirow{2}{2.9cm}{%
          $
          \left\{
              4 \times 
            \begin{array}{c}
              \qw{4}\\ 
              \qw{-}
            \end{array}\right\} 
          \join \left(
            \begin{array}{ccc}
              \qw{4}&\qw{4}&\qw{4}\\ 
              \qw{-}&\qw{-}&\qw{-} 
            \end{array}\right)
          $
        } & $11$ & $167$ & $35$ &  $18$  &   $3$   &   $-$   \\
        & & $12$ & $-$   & $-$  &  $-$   &   $-$   &   $-$   \\
        & & & & & & & \\[-2.3ex]
        \hline
        \hline 
        & & & & & & & \\[-2.3ex]
        $\M[30]$&
        \multirow{2}{3.6cm}{%
          $
          \left\{
              4 \times 
            \begin{array}{c}
              \qw{4}\\ 
              \qw{-}
            \end{array}\right\} 
          \join 
          \left\{
              3 \times 
            \begin{array}{c}
              \qw{4}\\
              \qw{-}
            \end{array}\right\} 
          \join
          \left(
            \begin{array}{c}
              \qw{2}\\ 
              \qw{-}
            \end{array}\right)
          $
        } & $11$ & $-$   & $-$  &  $8$   &   $-$   &   $-$   \\
        & & $12$ & $550$ & $53$ &  $29$  &   $2$   &   $-$   
        \\[-2.3ex]
        & & & & & & & \\
        \hline
      \end{tabular}
    \end{center}
    \end{footnotesize}
    \label{bdtwist5even2-tab}
  \end{table}

  \begin{table}[ht]
   \caption{Examples yielding elements of order $5$ in
     $\tilde{H}_{6+u}(\M[19+2u];\Z)$ for $0 \le u \le 4$
     (notation as in Table~\ref{bdtwist5even-tab}).}
    \begin{footnotesize}
     \begin{center}
      \begin{tabular}{|c|c|c||c||c|c|c|c|}
        \hline
        & & & & & & & \\[-2.3ex]
        Complex & Group action $(G,\alpha)$ & $d$  
        & 
        $\beta_d$
        &
        $\rho_d(2)$ & 
        $\rho_d(3)$ & $\rho_d(5)$ & 
        $\rho_d(7)$ \\
        & & & & & & & \\[-2.3ex]
        \hline
        \hline 
        & & & & & & & \\[-2.3ex]
        $\M[19]$&
        \multirow{2}{2.9cm}{%
          $
          \left\{
              4 \times 
            \begin{array}{c}
              \qw{3}\\ 
              \qw{-}
            \end{array}\right\}
          \join \left(
            \begin{array}{ccc}
              \qw{2}&\qw{2}&\qw{3}\\ 
              \qw{-}&\qw{-}&\qw{-} 
            \end{array}\right)
          $
        } & $6$ & $-$  & $-$  &  $4$   &   $1$   &   $-$   \\
        & & $7$ & $96$ & $5$  &  $1$   &   $-$   &   $-$   \\
        & & & & & & & \\[-2.3ex]
        \hline
        \hline 
        & & & & & & & \\[-2.3ex]
        $\M[21]$&
        \multirow{2}{2.9cm}{%
          $
          \left\{
              4 \times 
            \begin{array}{c}
              \qw{3}\\ 
              \qw{-}
            \end{array}\right\}
          \join \left(
            \begin{array}{ccc}
              \qw{3}&\qw{3}&\qw{3}\\ 
              \qw{-}&\qw{-}&\qw{-} 
            \end{array}\right)
          $
        } & $7$ & $-$   & $1$  &  $20$   &   $3$   &   $-$   \\
        & & $8$ & $167$ & $7$  &  $-$   &   $-$   &   $-$   \\
        & & & & & & & \\[-2.3ex]
        \hline
        \hline 
        & & & & & & & \\[-2.3ex]
        $\M[23]$&
        \multirow{2}{2.9cm}{%
          $
          \left\{
              4 \times 
            \begin{array}{c}
              \qw{3}\\ 
              \qw{-}
            \end{array}\right\}
          \join \left(
            \begin{array}{ccc}
              \qw{3}&\qw{4}&\qw{4}\\ 
              \qw{-}&\qw{-}&\qw{-} 
            \end{array}\right)
          $
        } & $8$ & $-$   & $-$  &  $22$   &   $2$   &   $-$   \\
        & & $9$ & $141$ & $-$  &  $2$   &   $-$   &   $-$   \\
        & & & & & & & \\[-2.3ex]
        \hline
        \hline 
        & & & & & & & \\[-2.3ex]
        $\M[25]$&
        \multirow{2}{2.9cm}{%
          $
          \left\{
              4 \times 
            \begin{array}{c}
              \qw{4}\\ 
              \qw{-}
            \end{array}\right\}
          \join \left(
            \begin{array}{ccc}
              \qw{3}&\qw{3}&\qw{3}\\ 
              \qw{-}&\qw{-}&\qw{-} 
            \end{array}\right)
          $
        } & $9$  & $-$  & $2$  &  $27$   &   $5$   &   $-$   \\
        & & $10$ & $93$ & $1$  &  $1$   &   $-$   &   $-$   \\
        & & & & & & & \\[-2.3ex]
        \hline
        \hline 
        & & & & & & & \\[-2.3ex]
        $\M[27]$&
        \multirow{2}{2.9cm}{%
          $
          \left\{
              4 \times 
            \begin{array}{c}
              \qw{4}\\ 
              \qw{-}
            \end{array}\right\}
          \join \left(
            \begin{array}{ccc}
              \qw{3}&\qw{4}&\qw{4}\\ 
              \qw{-}&\qw{-}&\qw{-} 
            \end{array}\right)
          $
        } & $10$ & $-$  & $12$  &  $51$   &   $14$   &   $-$   \\
        & & $11$ & $58$ & $2$  &  $2$   &   $-$   &   $-$   
        \\[-2.3ex]
        & & & & & & & \\
        \hline
      \end{tabular}
    \end{center}
    \end{footnotesize}
    \label{bdtwist5odd-tab}
  \end{table}

  \begin{table}[ht]
   \caption{Examples yielding elements of order $7$ in
     $\tilde{H}_{8+u}(\M[23+2u];\Z)$ for $0 \le u \le 9$
     (notation as in Table~\ref{bdtwist5even-tab}).}
    \begin{footnotesize}
     \begin{center}
      \begin{tabular}{|c|c|c||c||c|c|c|}
        \hline
        & & & & & & \\[-2.3ex]
        Complex & Group action $(G,\alpha)$ & $d$  
        & 
        $\beta_d$
        &
        $\rho_d(3)$ & $\rho_d(5)$ & 
        $\rho_d(7)$ 
        \\
        & & & & & & \\[-2.3ex]
        \hline
        \hline 
        & & & & & & \\[-2.3ex]
        $\M[23]$
        &
          \multirow{2}{4.5cm}{%
          $
          \left\{
              3 \times 
            \begin{array}{c}
              \qw{2}\\ 
              \qw{-}
            \end{array}\right\} 
          \join 
          \left\{
              3 \times 
            \begin{array}{c}
              \qw{3}\\ 
              \qw{-}
            \end{array}\right\} 
          \join 
          \left\{
              2 \times 
            \begin{array}{c}
              \qw{4}\\ 
              \qw{-}
            \end{array}\right\}
          $
        } & $8$ & $-$  &  $29$   &   $3$   &   $1$   \\
        & & $9$ & $332$ &  $-$   &   $-$   &   $-$   
        \\
        & & & & & & \\[-2.3ex]
        \hline
        \hline 
        & & & & & & \\[-2.3ex]
        $\M[25]$
        &
          \multirow{2}{4.5cm}{%
          $
          \left\{
              2 \times 
            \begin{array}{c}
              \qw{2}\\ 
              \qw{-}
            \end{array}\right\} 
          \join 
          \left\{
              3 \times 
            \begin{array}{c}
              \qw{3}\\ 
              \qw{-}
            \end{array}\right\} 
          \join 
          \left\{
              3 \times 
            \begin{array}{c}
              \qw{4}\\ 
              \qw{-}
            \end{array}\right\}
          $
        } & $9$ & $-$  &  $73$   &   $11$   &   $2$   \\
        & & $10$ & $407$ &  $1$   &   $-$   &   $-$   
        \\
        & & & & & & \\[-2.3ex]
        \hline
        \hline 
        & & & & & & \\[-2.3ex]
        $\M[27]$
        &
          \multirow{2}{2.9cm}{%
          $
          \left\{
              5 \times 
            \begin{array}{c}
              \qw{4}\\ 
              \qw{-}
            \end{array}\right\} 
          \join \left(
            \begin{array}{ccc}
              \qw{2}&\qw{2}&\qw{3}\\ 
              \qw{-}&\qw{-}&\qw{-} 
            \end{array}\right)
          $
        } & $10$ & $-$  &  $77$   &   $27$   &   $9$   \\
        &                 & $11$ & $150$ &  $1$   &   $-$   &   $-$   
        \\
        & & & & & & \\[-2.3ex]
        \hline
        \hline 
        & & & & & & \\[-2.3ex]
        $\M[29]$&
        \multirow{2}{2.9cm}{%
          $
          \left\{
              5 \times 
            \begin{array}{c}
              \qw{4}\\ 
              \qw{-}
            \end{array}\right\} 
          \join \left(
            \begin{array}{ccc}
              \qw{2}&\qw{2}&\qw{5}\\ 
              \qw{-}&\qw{-}&\qw{-} 
            \end{array}\right)
          $
        } & $11$ & $4$  &  $84$   &   $33$   &   $7$   \\
        &                 & $12$ & $66$ &  $-$   &   $-$   &   $-$   
        \\
        & & & & & & \\[-2.3ex]
        \hline
        \hline 
        & & & & & & \\[-2.3ex]
        $\M[31]$&
        \multirow{2}{2.9cm}{%
          $
          \left\{
              5 \times 
            \begin{array}{c}
              \qw{4}\\ 
              \qw{-}
            \end{array}\right\} 
          \join \left(
            \begin{array}{ccc}
              \qw{2}&\qw{3}&\qw{6}\\ 
              \qw{-}&\qw{-}&\qw{-} 
            \end{array}\right)
          $
        } & $12$ & $9$  &  $98$   &   $36$   &   $4$   \\
        &                 & $13$ & $32$ &  $1$   &   $-$   &   $-$   
        \\
        & & & & & & \\[-2.3ex]
        \hline
        \hline 
        & & & & & & \\[-2.3ex]
        $\M[33]$&
        \multirow{2}{2.9cm}{%
          $
          \left\{
              5 \times 
            \begin{array}{c}
              \qw{4}\\ 
              \qw{-}
            \end{array}\right\} 
          \join \left(
            \begin{array}{ccc}
              \qw{2}&\qw{5}&\qw{6}\\ 
              \qw{-}&\qw{-}&\qw{-} 
            \end{array}\right)
          $
        } & $13$ & $32$  &  $98$   &   $36$   &   $4$   \\
        &                 & $14$ & $9$ &  $1$   &   $-$   &   $-$   
        \\
        & & & & & & \\[-2.3ex]
        \hline
        \hline 
        & & & & & & \\[-2.3ex]
        $\M[35]$&
        \multirow{2}{2.9cm}{%
          $
          \left\{
              5 \times 
            \begin{array}{c}
              \qw{4}\\ 
              \qw{-}
            \end{array}\right\} 
          \join \left(
            \begin{array}{ccc}
              \qw{3}&\qw{6}&\qw{6}\\ 
              \qw{-}&\qw{-}&\qw{-} 
            \end{array}\right)
          $
        } & $14$ & $66$  &  $94$   &   $33$   &   $7$   \\
        &                 & $15$ & $4$ &  $-$   &   $-$   &   $-$   
        \\
        & & & & & & \\[-2.3ex]
        \hline
        \hline 
        & & & & & & \\[-2.3ex]
        $\M[37]$&
        \multirow{2}{2.9cm}{%
          $
          \left\{
              5 \times 
            \begin{array}{c}
              \qw{4}\\ 
              \qw{-}
            \end{array}\right\} 
          \join \left(
            \begin{array}{ccc}
              \qw{5}&\qw{6}&\qw{6}\\ 
              \qw{-}&\qw{-}&\qw{-} 
            \end{array}\right)
          $
        } & $15$ & $150$  &  $86$   &   $27$   &   $9$   \\
        &                 & $16$ & $-$ &  $-$   &   $-$   &   $-$   
        \\
        & & & & & & \\[-2.3ex]
        \hline
        \hline 
        & & & & & & \\[-2.3ex]
        $\M[39]$
        &
          \multirow{2}{4.5cm}{%
          $
          \left\{
              3 \times 
            \begin{array}{c}
              \qw{4}\\ 
              \qw{-}
            \end{array}\right\} 
          \join 
          \left\{
              3 \times 
            \begin{array}{c}
              \qw{5}\\ 
              \qw{-}
            \end{array}\right\} 
          \join 
          \left\{
              2 \times 
            \begin{array}{c}
              \qw{6}\\ 
              \qw{-}
            \end{array}\right\}
          $
        } & $16$ & $407$  &  $75$   &   $11$   &   $2$   \\
        & & $17$ & $-$ &  $1$   &   $-$   &   $-$   
        \\
        & & & & & & \\[-2.3ex]
        \hline
        \hline 
        & & & & & & \\[-2.3ex]
        $\M[41]$
        &
          \multirow{2}{4.5cm}{%
          $
          \left\{
              2 \times 
            \begin{array}{c}
              \qw{4}\\ 
              \qw{-}
            \end{array}\right\} 
          \join 
          \left\{
              3 \times 
            \begin{array}{c}
              \qw{5}\\ 
              \qw{-}
            \end{array}\right\} 
          \join 
          \left\{
              3 \times 
            \begin{array}{c}
              \qw{6}\\ 
              \qw{-}
            \end{array}\right\}
          $
        } & $17$ & $332$  &  $30$   &   $3$   &   $1$   \\
        & & $18$ & $-$ &  $-$   &   $-$   &   $-$   
        \\[-2.3ex]
        & & & & & & \\
        \hline
      \end{tabular}
    \end{center}
    \end{footnotesize}
    \label{bdtwist7odd-tab}
  \end{table}

  \begin{table}[ht]
   \caption{One example yielding elements of order $5$ in
     $\tilde{H}_{8}(\M[24];\Z)$ and another yielding elements of order
     $7$ in $\tilde{H}_{11}(\M[30];\Z)$
     (notation as in Table~\ref{bdtwist5even-tab}).}
    \begin{footnotesize}
     \begin{center}
      \begin{tabular}{|c|c|c||c||c|c|c|}
        \hline
        & & & & & & \\[-2.3ex]
        Complex & Group action $(G,\alpha)$ & $d$  
        & 
        $\beta_d$
        &
        $\rho_d(3)$ & $\rho_d(5)$ & 
        $\rho_d(7)$ 
        \\
        & & & & & & \\[-2.3ex]
        \hline
        \hline 
        & & & & & & \\[-2.3ex]
        $\M[24]$ &
        \multirow{2}{2.9cm}{%
          $
          \left\{
              4 \times 
            \begin{array}{c}
              \qw{3}\\ 
              \qw{-}
            \end{array}\right\}
          \join 
          \left\{
              4 \times 
            \begin{array}{c}
              \qw{3}\\ 
              \qw{-}
            \end{array}\right\}
          $
        } 
        & $8$ & $-$  &  $-$   &   $1$   &   $-$   \\
        & & $9$ & $67$  &  $28$   &   $6$   &   $-$   \\
        &                 & $10$ & $-$ &  $2$   &   $-$   &   $-$   \\
        & & & & & & \\[-2.3ex]
        \hline
        \hline 
        & & & & & & \\[-2.3ex]
        $\M[30]$
        &
          \multirow{2}{3.0cm}{%
          $
          \left\{
              3 \times 
            \begin{array}{c}
              \qw{2}\\ 
              \qw{-}
            \end{array}\right\} 
          \join 
          \left\{
              6 \times 
            \begin{array}{c}
              \qw{4}\\ 
              \qw{-}
            \end{array}\right\} 
          $
        } & $11$ & $-$  &  $6$   &   $-$   &   $1$   \\
        & & $12$ & $174$&  $8$   &   $-$   &   $-$    \\
        & & $13$ & $1$  &   $2$   &   $-$   &   $-$   
        \\[-2.3ex]
        & & & & & & \\
        \hline
      \end{tabular}
    \end{center}
    \end{footnotesize}
    \label{bdtwist7even-tab}
  \end{table}

  \begin{table}[ht]
   \caption{The integral homology 
     $\tilde{H}_d((G,\alpha);\Z)$ for certain choices 
     of parameters.}
    \begin{footnotesize}
     \begin{center}
      \begin{tabular}{|c|l|c||c||c|c|c|}
        \hline
        & & & \\[-2.3ex]
        Complex & Group action $(G,\alpha)$ & $d$  
        & 
        $\tilde{H}_d((G,\alpha);\Z)$ \\
        & & & \\[-2.3ex]
        \hline
        \hline 
        & & & \\[-2.3ex]
        $\M[7]$&
        \multirow{2}{2.9cm}{%
          $
          \left(
            \begin{array}{c}
              \qw{1}\\ 
              \qw{+}
            \end{array}\right)
          \join
          \left\{ 3 \times \begin{array}{c}
                \qw{2}\\ 
              \qw{-} 
            \end{array}\right\} 
          $
        } & $1$ & $\Z_3$  \\
        &                 & $2$ & $-$   \\
        & & & \\[-2.3ex]
        \hline
        \hline 
        & & & \\[-2.3ex]
        $\M[14]$&
        \multirow{2}{2.9cm}{%
          $
          \left(
            \begin{array}{c}
              \qw{2}\\ 
              \qw{+}
            \end{array}\right)
          \join
          \left\{ 4 \times \begin{array}{c}
                \qw{3}\\ 
              \qw{-} 
            \end{array}\right\}
          $
        } & $4$ & $\Z_5$  \\
        &                 & $5$ & $-$  \\
        & & & \\[-2.3ex]
        \hline
        \hline 
        & & & \\[-2.3ex]
        $\M[23]$&
        \multirow{2}{2.9cm}{%
          $
          \left(
            \begin{array}{c}
              \qw{3}\\ 
              \qw{+}
            \end{array}\right)
          \join
          \left\{ 5 \times \begin{array}{c}
                \qw{4}\\ 
              \qw{-} 
            \end{array}\right\}
          $
        } & $8$ & $\Z_7$   \\
        &                 & $9$ & $\Z$ \\
        & & & \\[-2.3ex]
        \hline
        \hline 
        & & & \\[-2.3ex]
        $\M[34]$&
        \multirow{2}{2.9cm}{%
          $
          \left(
            \begin{array}{c}
              \qw{4}\\ 
              \qw{+}
            \end{array}\right)
          \join
          \left\{ 6 \times \begin{array}{c}
                \qw{5}\\ 
              \qw{-} 
            \end{array}\right\}
          $
        } & $13$ & $\Z_9$   \\
        & &  $14$ & $\Z^6 \oplus \Z_3$   \\
        & & & \\[-2.3ex]
        \hline
        \hline 
        & & & \\[-2.3ex]
        $\M[47]$&
        \multirow{2}{2.9cm}{%
          $
          \left(
            \begin{array}{c}
              \qw{5}\\ 
              \qw{+}
            \end{array}\right)
          \join
          \left\{ 7 \times \begin{array}{c}
                \qw{6}\\ 
              \qw{-} 
            \end{array}\right\}
          $
        } & $19$ &  $\Z_{11}$   \\
        &                 & $20$ & $\Z^{18} \oplus (\Z_3)^2$   \\
        &                 & $21$ & $\Z_3 \oplus \Z_2$   \\
        & & & \\[-2.3ex]
        \hline
        \hline 
        & & & \\[-2.3ex]
        $\M[62]$&
        \multirow{2}{2.9cm}{%
          $
          \left(
            \begin{array}{c}
              \qw{6}\\ 
              \qw{+}
            \end{array}\right)
          \join
          \left\{ 8 \times \begin{array}{c}
                \qw{7}\\ 
              \qw{-} 
            \end{array}\right\}
          $
        } & $26$ & $\Z_{13}$  \\
        &                 & $27$ & $\Z^{55} \oplus (\Z_3)^6$ \\
        &                 & $28$ & $(\Z_3)^{3} \oplus (\Z_2)^4$ \\[-2.3ex]
        & & & \\
        \hline
      \end{tabular}
    \end{center}
    \end{footnotesize}
    \label{bdtwist4-tab}
  \end{table}

\end{document}